\theoremstyle{definition}
\newtheorem{assumption}[theorem]{Assumption}
\newcommand\norm[2]{\left\Vert#1\right\Vert_{#2}}
\newcommand\N{\mathbb{N}}
\newcommand\R{\mathbb{R}}
\newcommand\B{\mathbb{B}}
\newcommand\tto{\rightrightarrows}
\newcommand\id{\operatorname{id}}
\newcommand{\spa}{\operatorname{span}}
\newcommand{\dist}{\operatorname{dist}}
\newcommand{\dom}{\operatorname{dom}}
\newcommand{\rge}{\operatorname{rge}}
\newcommand{\gph}{\operatorname{gph}}
\newcommand{\epi}{\operatorname{epi}}
\DeclareMathOperator*{\argmin}{\operatorname{argmin}}
\newcommand{\mv}{\,\mid\,}
\crefname{figure}{Figure}{Figures}
\numberwithin{equation}{section}
\tikzset{
    punkt/.style={
           rectangle,
           draw=white, very thick,
           text width=10em,
           minimum height=1.5em,
           text centered}
}
\title{On implicit variables in optimization theory}
\author{%
	Mat\'{u}\v{s} Benko
	\thanks{
	Applied Mathematics and Optimization.
	University of Vienna, Austria.
	\email{matus.benko@univie.ac.at}.
	ORCID: 0000-0003-3307-7939.
	}
	\and
	Patrick Mehlitz
	\thanks{
	Institute of Mathematics, Chair of Optimal Control.
	Brandenburgische Technische Universität Cottbus--Senftenberg, Germany.
	\email{mehlitz@b-tu.de}.
	ORCID: 0000-0002-9355-850X.
	}
}
\shorttitle{On implicit variables in optimization theory}
\begin{document}

\maketitle

\begin{abstract}
    Implicit variables of a mathematical program are variables which
	 do not need to be optimized but are used to model feasibility
	 conditions.
	 They frequently appear in several different problem classes
	 of optimization theory comprising bilevel programming, evaluated 
	 multiobjective optimization, or nonlinear optimization problems
	 with slack variables.
	 In order to deal with implicit variables, they are often
	 interpreted as explicit ones. 
	 Here, we first point out that this is a light-headed approach
	 which induces artificial locally optimal solutions.
	 Afterwards, we derive various
	 Mordukhovich-stationarity-type necessary optimality conditions which correspond to
	 treating the implicit variables as
	 explicit ones on the one hand,
	 or using them only implicitly to model
	 the constraints on the other.
	 A detailed comparison of the obtained stationarity conditions as well as the associated underlying
	 constraint qualifications will be provided.
	 Overall, we proceed in a fairly general setting
	 relying on modern tools of variational analysis.
	 Finally, we apply our findings to different well-known problem classes
	 of mathematical optimization in order to visualize the obtained
	 theory. 
	 \\[2ex]
	\noindent
	\emph{Keywords:}
		 	Implicit variables, Local minimizers, Metric subregularity, M-stationarity, 
			Variational analysis
	\\[2ex]
	\noindent
	\emph{MSC (2010):} 
		49J53, 90C30, 90C33
\end{abstract}

\section{Introduction}\label{sec:introduction}

In this paper, we consider the mathematical program
\begin{equation}\label{eq:implicit_multipliers}\tag{P}
	\begin{split}
		f(z)&\,\to\,\min\limits_z\\
		0&\,\in\,H(z)\\
		z&\,\in\,M
	\end{split}
\end{equation}
where $f\colon\R^n\to\R$ is a given locally Lipschitz continuous 
objective function
and the set-valued mapping $H\colon\R^n\tto\R^s$ attains the form
\begin{equation*}
 H(z) := \bigcup\limits_{\lambda\in F(z)}G(z,\lambda)
\end{equation*}
for set-valued mappings $F\colon\R^n\tto\R^m$ and 
$G\colon\R^n\times\R^m\tto\R^s$ which possess
a closed graph, respectively.
Furthermore, $M\subset\R^n$ is a nonempty and closed set of simple
structure, e.g., a typical constraint set defined via standard
inequality and equality constraints satisfying a suitable constraint qualification.
The feasible set of \eqref{eq:implicit_multipliers} will be denoted by $Z\subset\R^n$.
We emphasize that the objective in \eqref{eq:implicit_multipliers} is only
minimized w.r.t.\ $z$ and \emph{not} w.r.t.\ $\lambda$
although the latter variable is used implicitly in order to model $Z$.
That is why we call $\lambda$ an \emph{implicit} variable.
Let us note that the rather general model \eqref{eq:implicit_multipliers}
covers several problem classes from optimization theory, which are 
modeled with the aid of implicit variables, e.g., bilevel programming problems, 
evaluated multiobjective 
programs, or programs with slack variables, see \Cref{sec:examples} as well.
In these examples, the implicit variables are given by means of
the lower level Lagrange multipliers, scalarization parameters, or
the introduced slack variables, respectively.

The main difficulty in \eqref{eq:implicit_multipliers}
is the appearing union of image sets associated with the given
set-valued mappings $F$ and $G$. An easy approach which can be used 
to handle this issue is to interpret $\lambda$ as a
meaningful variable. This leads to the consideration of
\begin{equation}\label{eq:explicit_multipliers}\tag{Q}
	\begin{split}
		f(z)&\,\to\,\min\limits_{z,\lambda}\\
			\lambda&\,\in\,F(z)\\
			0&\,\in\,G(z,\lambda)\\
			z&\,\in\,M
	\end{split}
\end{equation}
where $\lambda$ now takes the role of an \emph{explicit} variable.
We denote the feasible set of this program by $\tilde Z\subset\R^n\times\R^m$.
As we will see in \Cref{sec:relationship_of_P_and_Q}, there
is a one-to-one correspondence between the global minimizers
of \eqref{eq:implicit_multipliers} and \eqref{eq:explicit_multipliers}.
However, the relationship between local minimizers is far more delicate.
Particularly, there may exist local minimizers of \eqref{eq:explicit_multipliers}
which do not correspond to local minimizers of \eqref{eq:implicit_multipliers}.
Similar issues are pointed out in the context of bilevel programming,
see
\cite{AusselSvensson2019,DempeDutta2012,DempeMefoMehlitz2018,DempeMehlitz2018,LamparielloSagratellaStein2019}, 
w.r.t.\ the use of slack variables in logical programming, 
see \cite{Mehlitz2019b,Mehlitz2019}, 
or cardinality-constrained
optimization, see \cite{BurdakovKanzowSchwartz2016}, and will be
generalized in this paper.

Let us briefly mention that the concept of implicit variables can be extended
to related areas of optimization theory like the numerical solution of generalized
equations resulting from optimality conditions of convex optimization programs.
Here, it is also common to introduce intermediate or slack variables in order to 
state the convex chain rule for compositions, see e.g.\ \cite[Section~4]{BotCsetnekNagy2013},
or to decompose the inclusion to make it more tractable, see \cite[Section~5]{GfrererOutrata2019}.
It sometimes may be beneficial, however,
to formulate the assumptions ensuring convergence
in the implicit-variable-free setting.

Observing that the computation of local minimizers of \eqref{eq:explicit_multipliers}
does not generally yield local minimizers of \eqref{eq:implicit_multipliers} while
noting that \eqref{eq:explicit_multipliers} is likely to be a nonconvex program
due to the underlying applications we have in mind, a direct treatment
of \eqref{eq:implicit_multipliers} seems to be reasonable. 
With the aid of modern variational analysis,
in particular limiting coderivatives of set-valued
mappings,
see \cite{Mordukhovich2018,RockafellarWets1998}, we will infer necessary optimality
conditions of Mordukhovich-stationarity-type (subsequently, we will use the term
M-stationarity for brevity) for \eqref{eq:implicit_multipliers} under mild
constraint qualifications.
More precisely, we first derive optimality conditions comprising the implicitly 
known coderivative of the mapping $H$
under metric subregularity of $H$.
Afterwards, we study assumptions which
allow us to state these optimality conditions 
in terms of the coderivatives associated with $F$ and $G$. Clearly, this makes the
utilization of a coderivative \emph{chain rule} necessary, 
so we discuss weak conditions ensuring its applicability. Therefore, we make use of
the recent results from \cite{Benko2019,BenkoMehlitz2020} which are mainly based on
the presence of \emph{metric subregularity} for so-called ``feasibility'' mappings
and \emph{inner semicompactness} for so-called ``intermediate'' mappings.
Interestingly, these assumptions are mainly inherent for the example problems
from \cref{sec:examples} which underlines the power of this approach.
On the other hand, using problem \eqref{eq:explicit_multipliers}
as the starting point enables us to avoid the (coderivative of)
the mapping $H$. A similar role is then played by an auxiliary mapping, 
which depends on both, $z$ and $\lambda$.
As it will turn out, the procedures sketched above lead to three stationarity systems and 
diverse constraint qualifications which differ from each other w.r.t.\ their \emph{degree of
explicitness}.
We will provide a detailed comparison of all these stationarity conditions and constraint
qualifications, and we will comment on their respective relation to the problem \eqref{eq:explicit_multipliers}.
In \cite{AdamHenrionOutrata2018}, the authors discuss related issues
by means of an equilibrium-constrained mathematical problem with lower level inequality
constraints. It turns out that there is a significant difference
between the M-stationarity conditions for problem \eqref{eq:implicit_multipliers}, which are formulated
without taking into account the special structure of $H$
and, hence, without the variable $\lambda$,
and the M-stationarity conditions of \eqref{eq:explicit_multipliers}.
Moreover, the difference between the corresponding constraint qualifications,
i.e., the conditions that guarantee the validity of the two types
of M-stationarity at local minimizers, is even more striking.
Here, we generalize and deepen the approach from \cite{AdamHenrionOutrata2018}.
Let us briefly note that the theory of this paper can be extended
directly to the situation where $\R^n$, $\R^m$, and $\R^s$ are 
replaced by arbitrary Euclidean spaces. For simplicity, however,
we focus on the most elementary setting.

The remaining parts of this paper are organized as follows:
In \cref{sec:examples}, we present three prototypical example classes of optimization
problems where implicit variables are used to model the feasible set,
namely bilevel optimization problems, evaluated multiobjective optimization
problems, and mathematical programs with cardinality constraints.
We provide an overview of the 
notation used in this manuscript before recalling the definitions and some
background information about the fundamental tools of variational analysis
we are going to exploit here in \cref{sec:notation_preliminaries}. 
Furthermore, we motivate a rather abstract notion
of M-stationarity for mathematical problems with generalized equation
constraints and provide some calculus rules for the coderivative of
compositions and products of set-valued mappings.
Afterwards, 
\Cref{sec:lambda_as_variable} is dedicated to the abstract analysis of
the program \eqref{eq:implicit_multipliers} via \eqref{eq:explicit_multipliers}. 
First, we study the relationship
between the solutions of \eqref{eq:implicit_multipliers} and \eqref{eq:explicit_multipliers}
in \cref{sec:relationship_of_P_and_Q}. Second, we derive three potentially
distinct notions of M-stationarity for \eqref{eq:implicit_multipliers} as well
as associated constraint qualifications in \cref{sec:M_Stationarity_conditions}. 
Furthermore, we clarify how to obtain
explicit M-stationarity conditions in terms of initial problem data from the abstract
M-stationarity conditions of \eqref{eq:implicit_multipliers}.
As we will demonstrate in \cref{sec:optimality_conditions_via_special_structure}, the
necessary assumptions are inherently satisfied in many practically relevant settings.
Finally, we briefly comment on the sufficiency of the introduced M-stationarity
notions in the presence of convexity in \cref{sec:sufficient_conditions}.
In \cref{sec:applications}, we apply some of our findings to the aforementioned example
problems. As it will turn out, we recover or even enhance available results from
the literature with our approach.
Finally, we present some concluding remarks in \cref{sec:conclusions}.

\section{Motivating examples}\label{sec:examples}

In this section, we present three prototypical classes of
optimization problems whose respective feasible sets can
be modeled with the aid of implicit variables.

\subsection{Bilevel programming}\label{sec:bilevel_programming}

Quite often, hierarchical decision making appears naturally in real world problems raising in
economics, logistics, natural sciences, or engineering. In case where two decision makers
are involved, so-called bilevel optimization problems can be used to model and study the
underlying applications theoretically and numerically, see 
\cite{Bard1998,Dempe2002,DempeKalashnikovPerezValdesKalashnykova2015,Mordukhovich2018,ShimizuIshizukaBard1997}. 
For given parameters $x\in\R^{n_1}$, let us consider the parametric optimization problem
\begin{equation}\label{eq:lower_level}\tag{LL}
	\begin{split}
		j(x,y)&\,\to\,\min\limits_y\\
		g(y)&\,\in\,C
	\end{split}
\end{equation}
where $j\colon\R^{n_1}\times\R^{n_2}\to\R$ is twice continuously differentiable and convex
w.r.t.\ $y$ for each $x\in\R^{n_1}$ while $g\colon\R^{n_2}\to\R^m$ is twice continuously
differentiable and convex w.r.t.\ the closed, convex cone $C\subset\R^m$.
Let us set $\Gamma:=\{y\in\R^{n_2}\,|\,g(y)\in C\}$.
One generally refers to \eqref{eq:lower_level} as
the \emph{lower level} or \emph{follower's problem}. 
It is well known that due to the convexity of $j(x,\cdot)$ and $\Gamma$, the set of
optimal solutions associated with \eqref{eq:lower_level} is equivalently
characterized by
\[
	-\nabla_yj(x,y)\in\widehat{\mathcal N}_\Gamma(y).
\]
Under a suitable constraint qualification,
we can characterize the set of optimal solutions associated with \eqref{eq:lower_level} 
equivalently by the generalized equation
\[
	-\nabla_yj(x,y)\in\nabla g(y)^\top\widehat{\mathcal N}_{C}(g(y)).
\]
Above, we exploited the notion of the regular normal cone which coincides with the
normal cone of convex analysis due to convexity of $\Gamma$ and $C$, see
\cref{sec:notation_preliminaries}.

The superordinate so-called \emph{upper level} or \emph{leader's problem}, given by
\[	
	\begin{split}
	f(x,y)&\,\to\,\min\limits_{x,y}\\
		x&\,\in\,S\\
		y&\,\in\,\Psi(x),
	\end{split}
\]
where $S\subset\R^{n_1}$ is a closed set and $\Psi\colon\R^{n_1}\tto\R^{n_2}$ is the solution mapping associated with \eqref{eq:lower_level}, is equivalent to
\begin{equation}\label{eq:BPP_implicit_multipliers}\tag{BPP}
	\begin{split}
		f(x,y)&\,\to\min\limits_{x,y}\\
		x&\,\in\,S\\
		0&\,\in\,\bigcup_{\lambda\in\widehat{\mathcal N}_{C}(g(y))}
							\left\{\nabla_yj(x,y)+\nabla g(y)^\top\lambda\right\}
	\end{split}
\end{equation}
whenever the aforementioned assumptions are valid.
The latter is a program of type \eqref{eq:implicit_multipliers} in the sense
\begin{equation}\label{eq:setting_bilevel_programming}
		F(z):=\widehat{\mathcal N}_C(g(y)),\quad
		G(z,\lambda):=\nabla_yj(x,y)+\nabla g(y)^\top\lambda,\quad
		M:=S\times\R^{n_2}
\end{equation}
where we used $z:=(x,y)$ and $n:=n_1+n_2$.
Particularly, the implicit variable $\lambda$ is the Lagrange multiplier associated with the 
constraints in \eqref{eq:lower_level}. It is well known that interpreting $\lambda$ as a variable
in \eqref{eq:BPP_implicit_multipliers} may cause difficulties w.r.t.\ local minimizers, 
see e.g.\ 
\cite{AusselSvensson2019,DempeDutta2012,DempeMefoMehlitz2018,LamparielloSagratellaStein2019}.
The authors in \cite{AdamHenrionOutrata2018},
where the particular setting $C:=\R^m_-$ is discussed, focused on a qualitative comparison
of the reformulation
\[
	\begin{split}
		f(x,y)&\,\to\min\limits_{x,y}\\
		x&\,\in\,S\\
		-\nabla_yj(x,y)&\,\in\,\widehat{\mathcal N}_\Gamma(y),
	\end{split}
\]
which is fully equivalent to the original bilevel programming problem, and the
problem \eqref{eq:explicit_multipliers} associated with \eqref{eq:BPP_implicit_multipliers}
regarding constraint qualifications and M-stationarity-type optimality conditions.
They came up with the observation that the respective stationarity systems as well
as constraint qualifications differ significantly.
The analysis in this paper will put their results into a more general context.

\subsection{Evaluating weakly efficient points in multicriteria optimization}\label{sec:vector_optimization}

For a twice continuously differentiable componentwise convex function $j\colon\R^n\to\R^m$ with $m\geq 2$ components and a nonempty, closed, convex set $\Gamma\subset \R^n$, 
we consider the following multicriteria optimization problem which is a standard model
from vector optimization, see \cite{Ehrgott2005,Jahn2004}:
\begin{equation}\label{eq:MOP}\tag{MOP}
	\begin{split}
		j(z)&\,\rightarrow\,\text{``}\min\text{''}\\
		z&\,\in\,\Gamma.
	\end{split}
\end{equation}
The quotation marks emphasize that classical minimization is not
possible since the componentwise natural ordering in $\R^m$ only
provides a partial order which is not total. 
In order to overcome this problem, several different notions of
so-called \emph{efficiency} have been derived in order to
characterize reasonable feasible points of \eqref{eq:MOP}.
In this context, a point $\bar z\in \Gamma$ is called \emph{weakly efficient} for \eqref{eq:MOP}
if the condition 
\[
	\bigl(j(\bar z)-\R^m_{++}\bigr)\cap j(\Gamma)=\varnothing
\]
holds where we used $\R^m_{++}:=\{y\in\R^m\,|\,\forall i\in\{1,\ldots,m\}\colon\,y_i>0\}$.
Let $\Gamma_\textup{we}\subset\R^n$ be the set of weakly efficient points
associated with \eqref{eq:MOP}. Using a classical linear scalarization approach, see \cite{Ehrgott2005},
one obtains the characterization
\[
	\bar z\in\Gamma_\textup{we}
	\quad\Longleftrightarrow\quad
	\exists\lambda\in \Delta\,\forall z\in\Gamma\colon\,
	\lambda^\top j(\bar z)\leq\lambda^\top j(z)
\]
 of weak efficiency since the problem data in \eqref{eq:MOP} is convex.
Above, $\Delta\subset\R^m$ given by
\[
	\Delta:=\bigl\{
				\lambda\in\R^m\,|\,
				\lambda\geq 0,\,
				\mathsmaller\sum\nolimits_{i=1}^m\lambda_i=1
			\bigr\}
\]
denotes the standard simplex.
In practice, the set $\Gamma_{\textup{we}}$ will be quite large in general
which is why it is desirable to shrink it for applicability purposes using
another decision criterion given in form of a scalar function, see 
e.g.\ \cite{Benson1986,Bolintineanu1993b,Bolintineanu1993,HorstThoai1999}.
Here, we investigate the
\emph{evaluated multiobjective optimization problem}
\[
	\begin{split}
		f(z)&\,\to\,\min\\
		z&\,\in\,\Gamma_{\textup{we}}.
	\end{split}
\]
Invoking the above arguments, this is equivalent to
\begin{equation}\label{eq:EMOP_implicit_multipliers}\tag{EMOP}
	\begin{split}
		f(z)&\,\to\,\min\\
		0&\,\in\,\bigcup\limits_{\lambda\in \Delta}\Psi(\lambda)-z
	\end{split}
\end{equation}
where $\Psi\colon\R^m\tto\R^n$ is the set-valued mapping defined by
\[	
	\Psi(\lambda):=
	\begin{cases}
		\argmin\limits_z\{\lambda^\top j(z)\,|\,z\in\Gamma\}	&	\lambda\in\Delta,\\
		\varnothing												&	\lambda\notin\Delta.
	\end{cases}
\]
Obviously, this is a program of type \eqref{eq:implicit_multipliers} where the scalarization
parameter takes the role of the implicit variable.
Particularly, the data from \eqref{eq:implicit_multipliers} takes the form
\begin{equation}\label{eq:setting_multiobjective_optimization}
	F(z):= \Delta,\qquad
	G(z,\lambda):=\Psi(\lambda)-z,\qquad
	M:=\R^n.
\end{equation}
It has been reported in \cite{DempeMehlitz2018} that interpreting scalarization parameters 
as variables is generally delicate in multiobjective bilevel programming, and this
observation clearly extends to \eqref{eq:EMOP_implicit_multipliers} as well.

\subsection{Cardinality-constrained programming}\label{sec:CCMP}

For some vector $z\in\R^n$, let $\norm{z}{0}$ denote the number of non-zero entries of
$z$. In order to guarantee \emph{sparsity} of solutions to optimization problems, 
one may consider the so-called \emph{cardinality-constrained mathematical program}
\begin{equation}\label{eq:CCMP}\tag{CCMP}
	\begin{split}
		f(z)&\,\to\,\min\\
		z&\,\in\,M\\
		\norm{z}{0}&\,\leq\,\kappa
	\end{split}
\end{equation}
where $\kappa\in\N$ satisfies $1\leq\kappa\leq n-1$, see e.g.\
\cite{Bienstock1996,BucherSchwartz2018,BurdakovKanzowSchwartz2016,CervinkaKanzowSchwartz2016,
Mehlitz2019a,PanXiuFan2017,PaXiuZhoun2015}
for an overview of existing reformulations, optimality conditions, solution algorithms,
and further references.

Let us rewrite the cardinality constraint $\norm{z}{0}\leq\kappa$ 
as a constraint of type $q(z)\in D$ where $q\colon\R^n\to\R^\ell$ is continuously differentiable
and $D\subset\R^\ell$ is the union of finitely many convex polyhedral sets,
allowing us to interpret \eqref{eq:CCMP} as a so-called \emph{disjunctive} optimization problem,
see e.g.\ \cite{BenkoCervinkaHoheisel2019,FlegelKanzowOutrata2007,Mehlitz2019a}.
Therefore, we introduce $\mathcal J_\kappa:=\{\alpha\in\{0,1\}^n\,|\,\sum_{i=1}^n\alpha_i=\kappa\}$.
Furthermore, for each $\alpha\in\{0,1\}^n$, we set 
\[
	\R^n_\alpha:=\spa\{\mathtt e_i\,|\,i=1,\ldots,n,\,\alpha_i=1\}
\]
where $\mathtt e_i\in\R^n$ denotes the $i$-th unit vector in $\R^n$. Finally, we set
$D_\kappa:=\bigcup_{\alpha\in\mathcal J_\kappa}\R^n_\alpha$.
Then we easily see
\begin{equation}\label{eq:disjunctive_reformulation_cardinality_constraint}
	\forall z\in\R^n\colon\quad
	\norm{z}{0}\leq\kappa\,\Longleftrightarrow\,z\in D_\kappa.
\end{equation}
The variational geometry of $D_\kappa$ has been explored in \cite{PanXiuFan2017,PaXiuZhoun2015}.
Particularly, exploiting the limiting and regular normal cone to $D_\kappa$, reasonable notions of M- and strong stationarity are available for \eqref{eq:CCMP}.

It has been observed in \cite[Theorem~3.2]{BurdakovKanzowSchwartz2016} that we have
\[
	\forall z\in\R^n\colon\quad
	\norm{z}{0}\leq\kappa\,\Longleftrightarrow\,
	\exists\lambda\in\R^n\colon\,
	\mathtt e^\top \lambda\geq n-\kappa,\,(z_i,\lambda_i)\in\mathcal C\;\forall i\in\{1,\ldots,n\}
\]
where we used
\[
	\mathcal C:=\{(a,b)\in\R^2\,|\,ab=0,\,b\in[0,1]\},
\]
and that gave rise to the reformulation of \eqref{eq:CCMP} as
\begin{equation}\label{eq:CCMP_implicit_multipliers}
	\begin{aligned}
	f(z)&\,\to\,\min\limits_{z,\lambda}&&\\
	z&\,\in\,M&&\\
	\mathtt e^\top \lambda&\,\geq\,n-\kappa&\quad&\\
	(z_i,\lambda_i)&\,\in\,\mathcal C&& i=1,\ldots,n.	
	\end{aligned}
\end{equation}
Above, $\mathtt e\in\R^n$ denotes the all-ones vector.
Clearly, the variable $\lambda$ plays the role of an explicit variable in \eqref{eq:CCMP_implicit_multipliers} although it is not relevant for the
purpose of minimization.
In order to state \eqref{eq:CCMP_implicit_multipliers} in the form \eqref{eq:implicit_multipliers},
one may choose
\begin{equation}\label{eq:setting_CCMP}
	F(z)\equiv\bigl\{\lambda\in\R^n\,\big|\,\mathtt e^\top\lambda\geq n-\kappa\bigr\},\qquad
	G(z,\lambda):=\prod_{i=1}^n(\mathcal C-(z_i,\lambda_i)),
\end{equation}
i.e., $s=2n$ holds in this particular case.

It has been reported in \cite{BurdakovKanzowSchwartz2016} that 
\eqref{eq:CCMP} and \eqref{eq:CCMP_implicit_multipliers}
are equivalent w.r.t.\ global minimizers, that the local minimizers of \eqref{eq:CCMP} can be found among
the local minimizers of \eqref{eq:CCMP_implicit_multipliers}, 
and that for a local minimizer $(\bar z,\bar \lambda)\in\R^n\times\R^n$
of \eqref{eq:CCMP_implicit_multipliers}, which satisfies $\norm{\bar z}{0}=\kappa$, 
$\bar z$ is a local minimizer of
\eqref{eq:CCMP}. The situation where $\norm{\bar z}{0}<\kappa$ holds, 
however, has been shown to be crucial since, in this
case, $\bar z$ does not need to be locally optimal for \eqref{eq:CCMP}. 
First- and second-order optimality conditions for
\eqref{eq:CCMP} via its surrogate \eqref{eq:CCMP_implicit_multipliers} have been derived in
\cite{BucherSchwartz2018,BurdakovKanzowSchwartz2016,CervinkaKanzowSchwartz2016} while the authors in
\cite{Mehlitz2019a,PanXiuFan2017,PaXiuZhoun2015} exploited \eqref{eq:disjunctive_reformulation_cardinality_constraint}
in order to infer optimality conditions for \eqref{eq:CCMP} without relying on implicit variables.
To the best of our knowledge, a detailed comparison of both approaches does not exist in the literature.
Some regarding remarks, however, can be found in the papers \cite{BucherSchwartz2018,Mehlitz2019a}.
We also refer the interested reader to \cite{BeckHallak2016} where yet another approach to
cardinality-constrained optimization is discussed which also avoids the appearance of implicit
variables.

\section{Notation and preliminaries}\label{sec:notation_preliminaries}

\subsection{Notation}

Throughout the manuscript, we make use of the standard concepts of variational
analysis, see \cite{Mordukhovich2018,RockafellarWets1998}.

\subsubsection*{Basic notation}

In this manuscript, we equip $\R^n$ with the common Euclidean inner product and
the common Euclidean norm $\norm{\cdot}{}$. In order to extend the notion of
norms to product spaces, we use the sum of the underlying
Euclidean norms to induce a norm in the product space.
Due to notational purposes, for arbitrary $z\in\R^n$ and $w\in\R^m$, we use both notations
\[
	\begin{pmatrix}z\\w\end{pmatrix}
	\qquad\text{and}\qquad
	(z,w)
\]
in order to represent elements of $\R^n\times\R^m$.
We use $\B$ in order to denote the closed unit ball around $0$.
For a set $A\subset\R^n$ and $z\in\R^n$, we exploit $z+A=A+z:=\{z+a\,|\,a\in A\}$
for brevity. Furthermore, we use
\[
	A^\circ:=\bigl\{\xi\in\R^n\,\bigl|\,\forall a\in A\colon\,\xi^\top a\leq 0\},
	\qquad
	A^\perp:=\bigl\{\xi\in\R^n\,\bigl|\,\forall a\in A\colon\,\xi^\top a=0\}
\]
in order to represent the polar cone and the annihilator of $A$. We make use of
$z^\perp:=\{z\}^\perp$. We set
\[	
	\dist(z,A):=\inf\{\norm{a-z}{}\,|\,a\in A\}
\]
in order to denote the distance of $z$ to $A$.
Throughout the manuscript, we exploit $\R_+:=\{s\in\R\,|\,s\geq 0\}$ and
$\R_-:=\{s\in\R\,|\,s\leq 0\}$.

\subsubsection*{Set-valued mappings}

For a set-valued mapping $\Upsilon\colon\R^n\tto\R^m$, the sets
$\gph\Upsilon:=\{(z,w)\in\R^n\times\R^m\,|\,w\in\Upsilon(z)\}$,
$\dom\Upsilon:=\{z\in\R^n\,|\,\Upsilon(z)\neq\varnothing\}$,
$\rge\Upsilon:=\{w\in\R^m\,|\,\exists z\in\R^n\colon\,w\in\Upsilon(z)\}$,
and $\ker\Upsilon:=\{z\in\R^n\,|\,0\in\Upsilon(z)\}$
are called graph, domain, range, and kernel of $\Upsilon$, respectively.
The set-valued mapping $\Upsilon^{-1}\colon\R^m\tto\R^n$, given by
$\Upsilon^{-1}(w):=\{z\in\R^n\,|\,w\in\Upsilon(z)\}$ for all $w\in\R^m$,
is referred to as the inverse of $\Upsilon$.

The mapping $\Upsilon$ is called locally bounded at $\bar z\in\dom\Upsilon$ whenever we find 
a neighborhood $U\subset\R^n$ of $\bar z$ and a bounded set $B\subset\R^m$ such that
$\Upsilon(z)\subset B$ holds for all $z\in U$.
Furthermore, $\Upsilon$ is said to be inner semicompact at $\bar z$ 
w.r.t.\ $\Omega\subset\R^n$ if for each sequence $\{z_k\}_{k\in\N}\subset\Omega$
such that $z_k\to\bar z$,
there is a convergent sequence $\{w_l\}_{l\in\N}\subset\R^m$ and a subsequence $\{z_{k_l}\}_{l\in\N}$
such that
$w_l\in \Upsilon(z_{k_l})$ holds for all $l\in\N$. Clearly, whenever $\Upsilon$
is locally bounded at $\bar z$, it is inner semicompact w.r.t.\ $\dom\Upsilon$ at
this point.
Similarly, $\Upsilon$ is called inner semicontinuous at $(\bar z,\bar w)\in\gph\Upsilon$
w.r.t.\ $\Omega$ if for each sequence $\{z_k\}_{k\in\N}\subset\Omega$ satisfying $z_k\to\bar z$,
there exists a sequence $\{w_k\}_{k\in\N}\subset\R^m$ such that $w_k\to\bar w$ and
$w_k\in\Upsilon(z_k)$ for all sufficiently large $k\in\N$.

We call $\Upsilon$ convex whenever the property
\[
	\forall z_1,z_2\in\R^n,\,\forall \alpha\in[0,1]\colon\quad
		\alpha \Upsilon(z_1)+(1-\alpha)\Upsilon(z_2)
		\subset
		\Upsilon(\alpha z_1+(1-\alpha)z_2)
\]
holds, i.e., whenever $\gph\Upsilon$ is convex.
This property particularly holds for set-valued mappings of the form
$\Upsilon(z):=g(z)-C$, $z\in\R^n$, where $C\subset\R^m$ is a closed, convex cone and 
the single-valued mapping $g\colon\R^n\to\R^m$ is convex w.r.t.\ $C$, i.e.,
\[
	\forall z_1,z_2\in\R^n,\,\forall \alpha\in[0,1]\colon\quad
		g(\alpha z_1+(1-\alpha)z_2)-\alpha g(z_1)-(1-\alpha)g(z_2)\in C.
\]

Let us recall some essential Lipschitzian properties of set-valued mappings.
Therefore, we fix a point $(\bar z,\bar y)\in\gph\Upsilon$. Recall that $\Upsilon$
possesses the \emph{Aubin property} at $(\bar z,\bar w)$ whenever there are
a constant $\kappa>0$ and neighborhoods $U$ of $\bar z$ and $V$ of $\bar w$ such that
\[
	\forall z_1,z_2\in U\,\forall w\in\Upsilon(z_1)\cap V\colon\quad
	\dist(w,\Upsilon(z_2))\leq\kappa\norm{z_1-z_2}{}.
\]
Furthermore, $\Upsilon$ is called \emph{metrically regular} at $(\bar z,\bar w)$
whenever there are $\kappa>0$ as well as neighborhoods $U$ and $V$ of $\bar z$ and
$\bar w$, respectively, such that
\[
	\forall z\in U,\,\forall w\in V\colon\quad
	\dist(z,\Upsilon^{-1}(w))\leq\kappa\dist(w,\Upsilon(z)).
\]
It is well known that $\Upsilon$ is metrically regular at $(\bar z,\bar w)$ if and
only if $\Upsilon^{-1}$ possesses the Aubin property at $(\bar w,\bar z)$.
Finally, recall that $\Upsilon$ is referred to as \emph{metrically subregular}
at $(\bar z,\bar w)$ whenever there exist a constant $\kappa>0$ and a neighborhood $U$
of $\bar z$ satisfying
\[
	\forall z\in U\colon\quad
	\dist(z,\Upsilon^{-1}(\bar w))\leq\kappa\dist(\bar w,\Upsilon(z)).
\]
The infimum of all such constants $\kappa$ is referred to as the modulus of metric
subregularity.
Let us mention that $\Upsilon$ is metrically subregular at $(\bar z,\bar w)$ if and
only if the inverse $\Upsilon^{-1}$ is so-called \emph{calm} at $(\bar w,\bar z)$,
see \cite{HenrionJouraniOutrata2002,HenrionOutrata2005} for a definition and further details.
It has turned out that metric subregularity is of essential importance in
order to guarantee applicability of the prominent
calculus for the limiting constructions of
generalized differentiation.
This is one of the reasons why sufficient conditions for its presence were studied,
see e.g.\ 
\cite{BaiYeZhang2019,BenkoCervinkaHoheisel2019,FabHenKruOut10,GfrererKlatte2016,IofOut08,ZheNg10}
and the references therein as well as
the aforementioned papers dealing with calmness.
We would like to mention that each polyhedral set-valued mapping,
i.e., a set-valued mapping whose graph is the union of finitely many convex polyhedral sets,
is metrically subregular at each point of its graph.
This result dates back to \cite{Robinson1981}.

\subsubsection*{Variational analysis}

For a closed set $A\subset\R^n$ and some point $\bar z\in A$, we refer to
\[
	\mathcal T_A(\bar z)
	:=
	\left\{
		d\in\R^n\,\middle|\,
		\exists\{z_k\}_{k\in\N}\subset A,\,
		\exists\{t_k\}_{k\in\N}\subset \R_+\colon\,
		z_k\to\bar z,\,t_k\downarrow 0,\,(z_k-\bar z)/t_k\to d
	\right\}
\]
as the (Bouligand) tangent cone to $A$ at $\bar z$.
Based on that, let us introduce
\begin{align*}
	\widehat{\mathcal N}_A(\bar z)
	&:=
	\mathcal T_A(\bar z)^\circ,\\
	\mathcal N_A(\bar z)
	&:=	\left\{
			\xi\in\R^n\,\middle|\,
			\begin{aligned}
				&\exists \{z_k\}_{k\in\N}\subset A,\,
				 \exists \{\xi_k\}_{k\in\N}\subset\R^n\colon\\
				&\qquad 
					z_k\to\bar z,\,\xi_k\to\xi,\,
					\xi_k\in\widehat{\mathcal N}_A(z_k)\,\forall k\in\N
			\end{aligned}
		\right\},
\end{align*}
the so-called regular (or Fr\'{e}chet) and limiting (or Mordukhovich) normal
cone to $A$ at $\bar z$, respectively. In case where $A$ is convex, these normal cones correspond
to the normal cone in the sense of convex analysis, i.e.,
$\widehat{\mathcal N}_A(\bar z)=\mathcal N_A(\bar z)=(A-\bar z)^\circ$.

For a locally Lipschitz continuous function $\varphi\colon\R^n\to\R$, we define its
limiting subdifferential at some point $\bar z\in\R^n$ by means of
\[
	\partial\varphi(\bar z)
	:=
	\left\{
		\xi\in\R^n\,\middle|\,
		(\xi,-1)\in\mathcal N_{\epi\varphi}(\bar z,\varphi(\bar z))
	\right\}
\]
where $\epi\varphi:=\{(z,s)\in\R^n\times\R\,|\,s\geq\varphi(z)\}$ denotes the epigraph
of $\varphi$. In case where $\varphi$ is convex, the above arguments yield that $\partial\varphi(\bar z)$
coincides with the subdifferential in the sense of convex analysis, i.e.,
\[
	\partial\varphi(\bar z)
	=
	\bigl\{
		\xi\in\R^n\,\big|\,
		\forall z\in\R^n\colon\,\varphi(z)\geq\varphi(\bar z)+\xi^\top(z-\bar z)
	\bigr\}
\]
holds in this case.

For a set-valued mapping $\Upsilon\colon\R^n\tto\R^m$ with closed graph and some
point $(\bar z,\bar w)\in\gph\Upsilon$, the set-valued mapping 
$D^*\Upsilon(\bar z,\bar w)\colon\R^m\tto\R^n$ given by
\[
	\forall \eta\in\R^m\colon\quad
	D^*\Upsilon(\bar z,\bar w)(\eta)
	:=
	\left\{
		\xi\in\R^n\,\middle|\,
		(\xi,-\eta)\in\mathcal N_{\gph\Upsilon}(\bar z,\bar w)
	\right\}
\]
is referred to as the (limiting) coderivative of $\Upsilon$ at $(\bar z,\bar w)$.
For a single-valued mapping $\upsilon\colon\R^n\to\R^m$, we exploit the notation
$D^*\upsilon(\bar z):=D^*\upsilon(\bar z,\upsilon(\bar z))$. Whenever $\upsilon$
is continuously differentiable at $\bar z$, 
then $D^*\upsilon(\bar z)(\eta)=\nabla\upsilon(\bar z)^\top\eta$
holds for all $\eta\in\R^m$ where $\nabla\upsilon(\bar z)$ denotes the Jacobian of $\upsilon$
at $\bar z$.

Let us recall that $\Upsilon$ possesses the Aubin property at $(\bar z,\bar w)$ if and
only if the condition
\[
	D^*\Upsilon(\bar z,\bar w)(0)=\{0\}
\]
holds. Thus, $\Upsilon$ is metrically regular at $(\bar z,\bar w)$
if and only if we have
\[
	\ker D^*\Upsilon(\bar z,\bar w)=\{0\}.
\]
Both criteria are referred to as \emph{Mordukhovich criterion} in the literature,
see \cite[Theorem~3.3]{Mordukhovich2018}.

\subsection{M-stationarity for optimization problem with generalized equation constraints}
	\label{sec:abstract_M_St}

The following result can be found in \cite[Theorem~4.1]{HenrionJouraniOutrata2002} 
and \cite[Proposition~4.1]{GfrererOutrata2016b}. It provides a calculus rule for
the computation of the limiting normal cone to the pre-image associated with
a set-valued mapping which is metrically subregular at a certain point of interest.
\begin{proposition}\label{Pro:MSRMain}
 	Let $\Upsilon\colon\R^n\tto\R^m$ be a set-valued mapping having locally closed graph 
 	around $(\bar z,\bar w)\in\gph \Upsilon$
 	and assume that $\Upsilon$ is metrically subregular at $(\bar z,\bar w)$ with modulus $\bar\kappa$. 
 	Then the following estimate is valid for each
 	$\kappa > \bar\kappa$:
 	\begin{equation}\label{eq:NCtoM}
  		\mathcal N_{\Upsilon^{-1}(\bar w)}(\bar z) 
  		\subset
  		\left\{
  			\xi\in\R^n \,\middle|\,
  			\exists \eta \in \kappa \norm{\xi}{} \B\colon\,
  			\xi \in D^* \Upsilon(\bar z,\bar w)(\eta) 
  		\right\}.
 	\end{equation}
\end{proposition}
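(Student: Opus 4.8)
The plan is to prove the inclusion \eqref{eq:NCtoM} by starting from an arbitrary limiting normal vector $\xi\in\mathcal N_{\Upsilon^{-1}(\bar w)}(\bar z)$ and producing a dual multiplier $\eta$ with $\norm{\eta}{}\le\kappa\norm{\xi}{}$ and $\xi\in D^*\Upsilon(\bar z,\bar w)(\eta)$. First I would reduce to the case of \emph{regular} normal vectors via a standard limiting argument: by definition, $\xi$ is the limit of $\xi_k\in\widehat{\mathcal N}_{\Upsilon^{-1}(\bar w)}(z_k)$ with $z_k\to\bar z$ and $z_k\in\Upsilon^{-1}(\bar w)$, i.e.\ $(z_k,\bar w)\in\gph\Upsilon$. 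Since metric subregularity at $(\bar z,\bar w)$ with modulus $\bar\kappa$ passes, up to an arbitrarily small enlargement of the modulus, to nearby points $(z_k,\bar w)\in\gph\Upsilon$ (the subregularity inequality $\dist(z,\Upsilon^{-1}(\bar w))\le\kappa\,\dist(\bar w,\Upsilon(z))$ localized around $z_k$ holds with the same $\kappa$ for $k$ large, because $z_k$ lies in the neighborhood $U$ from the definition), it suffices to establish the pointwise estimate at each such $(z_k,\bar w)$ for the regular normal cone, obtain $\xi_k\in D^*\Upsilon(z_k,\bar w)(\eta_k)$ with $\norm{\eta_k}{}\le\kappa\norm{\xi_k}{}$, and then pass to the limit: the sequence $\{\eta_k\}$ is bounded since $\norm{\xi_k}{}\to\norm{\xi}{}$, so along a subsequence $\eta_k\to\eta$ with $\norm{\eta}{}\le\kappa\norm{\xi}{}$, and the closedness of $\gph\mathcal N_{\gph\Upsilon}$ (equivalently, outer semicontinuity of the limiting coderivative) yields $\xi\in D^*\Upsilon(\bar z,\bar w)(\eta)$.

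The core of the argument is therefore the following claim at the base point: if $\xi\in\widehat{\mathcal N}_{\Upsilon^{-1}(\bar w)}(\bar z)$ and $\Upsilon$ is metrically subregular at $(\bar z,\bar w)$ with modulus $<\kappa$, then there is $\eta\in\kappa\norm{\xi}{}\B$ with $(\xi,-\eta)\in\widehat{\mathcal N}_{\gph\Upsilon}(\bar z,\bar w)$ — or more robustly, with $(\xi,-\eta)\in\mathcal N_{\gph\Upsilon}(\bar z,\bar w)$. I would prove this by an exact-penalization / distance-function argument. A regular normal vector $\xi\in\widehat{\mathcal N}_{\Upsilon^{-1}(\bar w)}(\bar z)$ means $\bar z$ is a local minimizer of $z\mapsto -\xi^\top(z-\bar z)+o(\norm{z-\bar z}{})$ over $\Upsilon^{-1}(\bar w)$, or equivalently (after a small perturbation of $\xi$ which can be absorbed in the limiting passage) a local minimizer of $z\mapsto -\xi^\top z + \varepsilon\norm{z-\bar z}{}$ over $\Upsilon^{-1}(\bar w)$ for every $\varepsilon>0$. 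Using the metric subregularity estimate $\dist(z,\Upsilon^{-1}(\bar w))\le\kappa\,\dist(\bar w,\Upsilon(z))$ on a neighborhood $U$, together with a Clarke-type exact-penalty lemma (the objective $z\mapsto-\xi^\top z$ is Lipschitz with constant $\norm{\xi}{}$), one shows that $\bar z$ is an \emph{unconstrained} local minimizer of
\[
	z\;\longmapsto\; -\xi^\top z + (\norm{\xi}{}+\varepsilon)\,\kappa\,\dist(\bar w,\Upsilon(z)) + \varepsilon\norm{z-\bar z}{}.
\]
Writing $\dist(\bar w,\Upsilon(z)) = \dist\bigl((z,\bar w),\gph\Upsilon\bigr)$ viewed as a function of $z$ (this identity is immediate from the definition of the distance to a graph, and it does not increase the Lipschitz modulus in the relevant direction), applying the generalized Fermat rule at $\bar z$ and the known subdifferential estimate $\partial\bigl[\dist(\cdot,\gph\Upsilon)\bigr](\bar z,\bar w)\subset\mathcal N_{\gph\Upsilon}(\bar z,\bar w)\cap\B$ (since $\dist(\cdot,C)$ is $1$-Lipschitz and its subdifferential at points of $C$ is contained in $\mathcal N_C\cap\B$), one extracts a pair $(\xi',-\eta')\in(\norm{\xi}{}+\varepsilon)\kappa\,\mathcal N_{\gph\Upsilon}(\bar z,\bar w)\cap(\norm{\xi}{}+\varepsilon)\kappa\B$ with $\norm{\xi-\xi'}{}\le\varepsilon$; letting $\varepsilon\downarrow 0$ and using outer semicontinuity of the limiting normal cone closes this step. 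Equivalently, one may cite the sum rule for limiting subdifferentials applied to $-\xi^\top z + (\norm{\xi}{}\kappa)\dist((z,\bar w),\gph\Upsilon)$ directly, which is the route taken in \cite{HenrionJouraniOutrata2002}.

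The main obstacle, and the point that deserves the most care, is the bookkeeping of the \textbf{modulus}: one must ensure that the constant multiplying the normal cone is exactly controlled by $\kappa\norm{\xi}{}$ and not merely by some larger universal constant. This is why the statement is phrased for every $\kappa>\bar\kappa$ rather than for $\bar\kappa$ itself — the subregularity inequality only holds with constants strictly above the modulus on the relevant neighborhood, and the exact-penalty passage introduces the factor $\norm{\xi}{}$ (the Lipschitz modulus of the linear objective) multiplicatively. A secondary technical point is the localization of metric subregularity to the perturbed base points $z_k$: strictly speaking subregularity is a property at a single point, but since the defining inequality holds uniformly for all $z$ in a fixed neighborhood $U$ of $\bar z$ with the fixed constant $\kappa$, for $z_k\in U$ one has $\dist(z,\Upsilon^{-1}(\bar w))\le\kappa\,\dist(\bar w,\Upsilon(z))$ for all $z$ in a (possibly smaller) neighborhood of $z_k$, which is exactly metric subregularity at $(z_k,\bar w)$ with the same modulus bound — so the per-point estimate applies verbatim. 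Everything else is a routine limiting argument exploiting closedness of $\gph\Upsilon$ and the outer semicontinuity built into the definition of $\mathcal N_{\gph\Upsilon}$ and $D^*\Upsilon$.
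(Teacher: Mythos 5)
The paper does not actually prove this proposition --- it is imported verbatim from \cite[Theorem~4.1]{HenrionJouraniOutrata2002} and \cite[Proposition~4.1]{GfrererOutrata2016b} --- so the relevant comparison is with the standard proof in those sources. Your architecture matches it: reduce to regular normals $\xi_k\in\widehat{\mathcal N}_{\Upsilon^{-1}(\bar w)}(z_k)$ at nearby points of $\Upsilon^{-1}(\bar w)$ (where subregularity persists with the same constant because the defining inequality is uniform on $U$), establish the estimate there by exact penalization, and pass to the limit using the bound $\norm{\eta_k}{}\leq\kappa\norm{\xi_k}{}$ and outer semicontinuity of $\mathcal N_{\gph\Upsilon}$. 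The modulus bookkeeping $(\norm{\xi}{}+\varepsilon)\kappa\to\kappa\norm{\xi}{}$ is also handled correctly.

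There is, however, a genuine error in the core step. The claimed identity $\dist(\bar w,\Upsilon(z))=\dist\bigl((z,\bar w),\gph\Upsilon\bigr)$ is false: the left-hand side freezes the first coordinate, the right-hand side lets it move, so only $\dist\bigl((z,\bar w),\gph\Upsilon\bigr)\leq\dist(\bar w,\Upsilon(z))$ holds --- and this inequality points the wrong way for your argument, since knowing that $\bar z$ minimizes $g+C\,\dist(\bar w,\Upsilon(\cdot))$ says nothing about the \emph{smaller} function $g+C\,\dist\bigl((\cdot,\bar w),\gph\Upsilon\bigr)$. Concretely, take $\Upsilon(z):=\{Nz\}$ with $N>1$, $\bar z=\bar w=0$, $\xi=1$: then $\kappa$ can be taken close to $1/N$, $\dist(0,\Upsilon(z))=N|z|$ while $\dist\bigl((z,0),\gph\Upsilon\bigr)\leq|z|$, and $z\mapsto -z+\kappa\norm{\xi}{}\dist\bigl((z,0),\gph\Upsilon\bigr)$ is \emph{not} minimized at $0$; note also that here the asserted bound $|\eta|\leq\kappa|\xi|\approx 1/N$ is tight, so the natural repair via $\dist(z,\Upsilon^{-1}(\bar w))\leq\max\{1,\kappa\}\dist\bigl((z,\bar w),\gph\Upsilon\bigr)$ destroys exactly the constant you must preserve. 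The clean fix, which is what the cited proofs do, is to avoid the graph distance entirely: from $\dist(z,\Upsilon^{-1}(\bar w))\leq\kappa\,\dist(\bar w,\Upsilon(z))\leq\kappa\norm{w-\bar w}{}$ for every $w\in\Upsilon(z)$, conclude that $(\bar z,\bar w)$ is a local minimizer of $(z,w)\mapsto-\xi^\top z+\varepsilon\norm{z-\bar z}{}+(\norm{\xi}{}+\varepsilon)\kappa\norm{w-\bar w}{}$ over $\gph\Upsilon$, then apply the Fermat rule and the sum rule for the limiting subdifferential (Lipschitz terms plus the indicator of $\gph\Upsilon$). This produces $(\xi-u,-\eta_\varepsilon)\in\mathcal N_{\gph\Upsilon}(\bar z,\bar w)$ with $\norm{u}{}\leq\varepsilon$ and $\norm{\eta_\varepsilon}{}\leq(\norm{\xi}{}+\varepsilon)\kappa$, exactly the pair you were after; the remainder of your limiting argument then goes through unchanged.
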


This motivates the subsequently stated extension of the definition of M-stationarity to abstract 
optimization programs of the form
\begin{equation}\label{eq:general_program}\tag{G}
	\begin{split}
		\varphi(z)&\,\to\,\min\\
		0&\,\in \, \Upsilon(z),
	\end{split}
\end{equation}
where $\varphi\colon\R^n\to\R$ is a locally Lipschitz continuous function and
$\Upsilon\colon\R^n\tto\R^m$ is a set-valued mapping with closed graph,
observing that $\Upsilon^{-1}(0)$ is the feasible set of \eqref{eq:general_program}.
A similar stationarity condition has been studied in the literature, 
see \cite[Section~3]{YeYe1997}.
\begin{definition}\label{Def:Mstat}
We say that a feasible point $\bar z \in \Upsilon^{-1}(0)$ of the program \eqref{eq:general_program} 
is M-stationary provided there exists $\eta \in \R^m$ such that
\[
 	0 \in \partial \varphi(\bar z) + D^*\Upsilon(\bar z,0)(\eta).
\]
\end{definition}

In case where the constraint mapping $\Upsilon$ is given in the form of a
so-called ``feasibility'' mapping, i.e., $\Upsilon(z) := \Phi(z) - \Omega$
for all $z\in\R^n$,
where, for simplicity, $\Phi\colon\R^n\to\R^m$ is a continuously differentiable mapping 
and $\Omega\subset\R^m$ is a closed set, \cref{Pro:MSRMain} yields the standard pre-image calculus rule
\[
	\mathcal N_{\Phi^{-1}(\Omega)}(z) \subset \nabla\Phi(z)^\top \mathcal N_{\Omega}(\Phi(z))
\]
for each $z\in\R^n$ such that $\Upsilon$ is metrically subregular at $(z,0)\in\gph\Upsilon$.
This also justifies the M-stationarity terminology in \cref{Def:Mstat}.
Moreover, the additional bound $\norm{\eta}{} \leq \kappa \norm{\xi}{}$ in \eqref{eq:NCtoM} implies that for every
$\xi \in \mathcal N_{\Phi^{-1}(\Omega)}(z)$, there exists a multiplier 
$\eta \in \mathcal N_{\Omega}(\Phi(z)) \cap \kappa \norm{\xi}{} \B$
with $\xi = \nabla\Phi(z)^\top \eta$.
This observation, however, brings forth the following important property of the associated multiplier 
mapping which has been discussed in \cite[Theorem 3.9]{Benko2019} recently.
\begin{proposition}\label{prop:inner_semicompactness_of_multiplier_map}
	Let $\Phi\colon\R^n\to\R^m$ be continuously differentiable and let $\Omega\subset\R^m$ be closed.
	We consider $\Upsilon\colon\R^n\tto\R^m$ given by $\Upsilon(z):=\Phi(z)-\Omega$ for each $z\in\R^n$.
 	Let $(\bar z,\bar z^*) \in \gph \mathcal N_C$ for $C := \Phi^{-1}(\Omega)$ be fixed and assume 
 	that the feasibility mapping
 	$\Upsilon$ is metrically subregular at $(\bar z,0)$.
 	Then the multiplier mapping $\Lambda\colon\R^n \times \R^n \tto \R^m$, given by
	 \begin{equation*}
	 	\forall (z,z^*)\in\R^n\times\R^n\colon\quad
  		\Lambda( z,z^*):= 
  		\bigl\{
  			\eta \in \mathcal N_{\Omega}(\Phi(z))\,\big|\,
  			\nabla\Phi(z)^\top \eta = z^*
  		\bigr\},
 	\end{equation*}
 	is inner semicompact at $(\bar z,\bar z^*)$ w.r.t.\ $\gph \mathcal N_C$.
 	Moreover, $\gph \mathcal N_C = \dom \Lambda$ holds provided $\Omega$ is convex.
\end{proposition}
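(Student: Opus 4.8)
The plan is to specialize \cref{Pro:MSRMain} to the feasibility mapping $\Upsilon$ and to exploit that the coderivative appearing there can be written down in closed form. I would start from two elementary observations. First, $\Upsilon^{-1}(0)=\Phi^{-1}(\Omega)=C$ and $\dist(0,\Upsilon(z))=\dist(\Phi(z),\Omega)$ for every $z\in\R^n$. Second, $\gph\Upsilon=\{(z,w)\,|\,\Phi(z)-w\in\Omega\}$ is the preimage of $\Omega$ under the map $(z,w)\mapsto\Phi(z)-w$, whose derivative $[\nabla\Phi(z),-I]$ is surjective at every point; hence the exact preimage rule for the limiting normal cone yields $\mathcal N_{\gph\Upsilon}(z,0)=\bigl\{(\nabla\Phi(z)^\top\eta,-\eta)\,|\,\eta\in\mathcal N_\Omega(\Phi(z))\bigr\}$ for all $z\in C$, so that $D^*\Upsilon(z,0)(\eta)=\{\nabla\Phi(z)^\top\eta\}$ whenever $\eta\in\mathcal N_\Omega(\Phi(z))$, and $D^*\Upsilon(z,0)(\eta)=\varnothing$ otherwise. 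In particular, $\eta\in\Lambda(z,z^*)$ if and only if $z^*\in D^*\Upsilon(z,0)(\eta)$.

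Next I would upgrade the local metric subregularity at $(\bar z,0)$ to a uniform statement along $C$ near $\bar z$. Fixing $\kappa>\bar\kappa$ and an \emph{open} neighborhood $U$ of $\bar z$ with $\dist(z,C)\le\kappa\,\dist(\Phi(z),\Omega)$ for all $z\in U$, and recalling $\Upsilon^{-1}(0)=C$, this single inequality shows at once that for \emph{every} $z_0\in C\cap U$ the mapping $\Upsilon$ is metrically subregular at $(z_0,0)$ with modulus at most $\kappa$ — the very same neighborhood $U$ works. Fixing any $\hat\kappa>\kappa$ and applying \cref{Pro:MSRMain} at $(z_0,0)$ together with the closed form of $D^*\Upsilon$ above, I obtain: for every $z_0\in C\cap U$ and every $z^*\in\mathcal N_C(z_0)$ there is some $\eta\in\mathcal N_\Omega(\Phi(z_0))$ with $\nabla\Phi(z_0)^\top\eta=z^*$ and $\norm{\eta}{}\le\hat\kappa\norm{z^*}{}$, i.e.\ $\Lambda(z_0,z^*)\cap\hat\kappa\norm{z^*}{}\B\neq\varnothing$.

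With this uniform nonemptiness-plus-boundedness in hand, inner semicompactness of $\Lambda$ at $(\bar z,\bar z^*)$ w.r.t.\ $\gph\mathcal N_C$ is a routine compactness argument: given $\{(z_k,z_k^*)\}_{k\in\N}\subset\gph\mathcal N_C$ with $(z_k,z_k^*)\to(\bar z,\bar z^*)$, one has $z_k\in C\cap U$ for large $k$, hence may pick $\eta_k\in\Lambda(z_k,z_k^*)\cap\hat\kappa\norm{z_k^*}{}\B$; since $\{z_k^*\}_{k\in\N}$ is bounded, so is $\{\eta_k\}_{k\in\N}$, and any convergent subsequence provides the selection demanded by the definition. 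For the final claim, assume $\Omega$ convex. The inclusion $\dom\Lambda\subseteq\gph\mathcal N_C$ needs no subregularity: if $\eta\in\mathcal N_\Omega(\Phi(z))$ and $\nabla\Phi(z)^\top\eta=z^*$, then $\mathcal N_\Omega(\Phi(z))\neq\varnothing$ forces $\Phi(z)\in\Omega$, i.e.\ $z\in C$, and since $\mathcal N_\Omega(\Phi(z))=\widehat{\mathcal N}_\Omega(\Phi(z))$ by convexity, a first-order expansion of $\Phi$ at $z$ gives $\nabla\Phi(z)^\top\widehat{\mathcal N}_\Omega(\Phi(z))\subseteq\widehat{\mathcal N}_C(z)\subseteq\mathcal N_C(z)$, whence $(z,z^*)\in\gph\mathcal N_C$; the opposite inclusion (for points close to $\bar z$) is precisely what the second paragraph delivers.

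The step I expect to be the crux is the second one: realizing that metric subregularity of $\Upsilon$ at the single point $(\bar z,0)$ already yields metric subregularity at all nearby points of $C$ with one common modulus — this is exactly the mechanism that makes $\Lambda$ locally bounded and hence inner semicompact. The remaining ingredients — the explicit coderivative of a feasibility mapping, one application of \cref{Pro:MSRMain}, and the extraction of a convergent subsequence — are standard.
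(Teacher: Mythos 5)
Your argument for the main assertion (inner semicompactness) is correct, and since the paper states \cref{prop:inner_semicompactness_of_multiplier_map} without proof, importing it from \cite[Theorem~3.9]{Benko2019}, there is no in-paper proof to compare against; the mechanism you use is the intended one. The two observations that carry the proof are exactly right: $\Upsilon^{-1}(0)=C$ does not depend on the base point, so the single subregularity estimate on an open neighborhood $U$ of $\bar z$ certifies metric subregularity of $\Upsilon$ at every $(z_0,0)$ with $z_0\in C\cap U$ with one common constant, and surjectivity of the Jacobian of $(z,w)\mapsto\Phi(z)-w$ gives the exact formula $D^*\Upsilon(z,0)(\eta)=\{\nabla\Phi(z)^\top\eta\}$ for $\eta\in\mathcal N_\Omega(\Phi(z))$ and $\varnothing$ otherwise. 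Combined with \cref{Pro:MSRMain} this yields $\Lambda(z_0,z^*)\cap\hat\kappa\norm{z^*}{}\B\neq\varnothing$ uniformly on $C\cap U$, and the subsequence extraction is routine. You also correctly aim only at a norm-bounded \emph{selection} of $\Lambda$ rather than at local boundedness of $\Lambda$ itself (which would require metric regularity); that distinction is precisely the point of the proposition, as the discussion following it in the paper emphasizes.

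The one place where your proof falls short of the literal statement is the final clause. You obtain $\dom\Lambda\subseteq\gph\mathcal N_C$ globally (correctly: convexity of $\Omega$ is exactly what lets you pass to Fr\'echet normals and pull them back through $\nabla\Phi(z)$), but the reverse inclusion only on $C\cap U$. This is not a defect you could have repaired: the global equality $\gph\mathcal N_C=\dom\Lambda$ does not follow from metric subregularity at the single point $(\bar z,0)$. For instance, with $n=m=1$, $\Phi(z):=z(z-2)^2$, and $\Omega:=\R_-$, one has $C=(-\infty,0]\cup\{2\}$ and $\Upsilon$ is metrically subregular at $(0,0)$, yet $\mathcal N_C(2)=\R$ while $\nabla\Phi(2)=0$ forces $\Lambda(2,z^*)=\varnothing$ for every $z^*\neq 0$. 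The ``moreover'' part must therefore be read locally around $(\bar z,\bar z^*)$ --- which your second paragraph delivers --- or under metric subregularity of $\Upsilon$ at $(z,0)$ for all $z\in C$, which is how the proposition is actually invoked in \cref{sec:app_bilevel_programming}. With the local reading, your proof establishes everything that is true under the stated hypotheses.
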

For comparison, it is well known that the stronger assumption of metric regularity
yields local boundedness of the multiplier mapping $\Lambda$, and this has indeed been often used
to guarantee its inner semicompactness, see \cref{sec:app_bilevel_programming} as well.
While it may seem that local boundedness is not much stronger than inner semicompactness,
when the underlying assumptions are compared, the gap is noteworthy.
In case of standard nonlinear programs, metric regularity equals the prominent
\emph{Mangasarian--Fromovitz constraint qualification} (MFCQ), while in case
of the more general class of geometric constraints as discussed in 
\cref{prop:inner_semicompactness_of_multiplier_map}, metric regularity of the 
feasibility mapping $\Upsilon$ at $(\bar z,0)\in\gph\Upsilon$ boils down to
\[
	\nabla\Phi(\bar z)^\top\eta=0,\,\eta\in\mathcal N_\Omega(\Phi(\bar z))
	\quad\Longrightarrow\quad
	\eta=0,
\] 
which is called
\emph{generalized Mangasarian--Fromovitz constraint qualification} (GMFCQ)
or \emph{no nonzero abnormal multiplier constraint qualification} (NNAMCQ).
The above propositions often enable us to relax the metric regularity requirement
to the so-called \emph{metric subregularity constraint qualification} (MSCQ) demanding 
$\Upsilon$ to be metrically subregular at $(\bar z,0)$.

\subsection{Calculus of set-valued mappings}\label{sec:calculus_of_set_valued_maps}

Throughout this section, we assume that all considered set-valued
mappings possess closed graphs.

We first review the chain rule for the computation of coderivatives of
compositions. Therefore, let us consider set-valued mappings 
$S_1\colon\R^n\tto\R^m$ and $S_2\colon\R^m\tto\R^s$ as well as their
composition $S_2\circ S_1\colon\R^n\tto\R^s$ given by
\[
	\forall z\in\R^n\colon\quad
	(S_2\circ S_1)(z):=\bigcup\limits_{y\in S_1(z)} S_2(y).
\]
In order to estimate the coderivative of $S_2\circ S_1$,
we introduce the standard
``intermediate'' mapping $\Xi\colon\R^n\times\R^s\tto\R^m$ given by
\begin{equation}\label{eq:intermediate_mapping}
	\forall z\in\R^n,\,\forall w\in\R^s\colon\quad
	\Xi(z,w):=S_1(z)\cap S_2^{-1}(w)=\{y\in S_1(z)\,|\,w\in S_2(y)\},
\end{equation}
together with the ``feasibility'' mapping $\Upsilon\colon\R^n\times\R^s\times\R^m\tto\R^{m}\times\R^{s}$
in the following form:
\begin{equation*}
	\forall z\in\R^n,\,\forall w\in\R^s,\,\forall y\in\R^m\colon\quad
	\Upsilon(z,w,y):=
	\begin{pmatrix}
		S_1(z)-y\\
		S_2(y)-w
	\end{pmatrix}.
\end{equation*}
The following chain rule is taken from \cite[Theorem~5.2]{BenkoMehlitz2020}.

\begin{lemma}\label{lem:chain_rule}
	Fix $(\bar z,\bar w)\in\gph(S_2\circ S_1)$.
	Let $\Xi$ be inner semicompact w.r.t.\ $\dom\Xi$ at $(\bar z,\bar w)$
	and let $\Upsilon$ be metrically subregular at all points
	$((\bar z,\bar w,\bar y),(0,0))$ such that $\bar y\in\Xi(\bar z,\bar w)$.
	Then we have
	\[
		\forall w^*\in\R^s\colon\quad
		D^*(S_2\circ S_1)(\bar z,\bar w)(w^*)
		\subset
		\bigcup\limits_{\bar y\in\Xi(\bar z,\bar w)}
			\bigl(D^*S_1(\bar z,\bar y)\circ D^*S_2(\bar y,\bar w)\bigr)(w^*).
	\]
\end{lemma}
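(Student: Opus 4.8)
The plan is to reduce the chain rule to the preimage calculus rule from \Cref{Pro:MSRMain} applied to the feasibility mapping $\Upsilon$, after observing that the graph of the composition is, up to a projection, the preimage of $\{0\}$ under $\Upsilon$ enlarged by the intermediate variable. Concretely, fix $w^*\in\R^s$ and take $\xi\in D^*(S_2\circ S_1)(\bar z,\bar w)(w^*)$, i.e.\ $(\xi,-w^*)\in\mathcal N_{\gph(S_2\circ S_1)}(\bar z,\bar w)$. The first step is to relate $\gph(S_2\circ S_1)$ to the set
\[
	\Theta:=\bigl\{(z,w,y)\in\R^n\times\R^s\times\R^m\,\big|\,y\in S_1(z),\ w\in S_2(y)\bigr\}
	=\Upsilon^{-1}(0,0),
\]
noting that $\gph(S_2\circ S_1)$ is exactly the image of $\Theta$ under the projection $(z,w,y)\mapsto(z,w)$, and that the fibre of this projection over $(\bar z,\bar w)$ is $\{\bar z\}\times\{\bar w\}\times\Xi(\bar z,\bar w)$.

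The second step is to pass the normal cone through this projection. Because $\Xi$ is inner semicompact w.r.t.\ $\dom\Xi$ at $(\bar z,\bar w)$, any sequence of points in $\gph(S_2\circ S_1)$ converging to $(\bar z,\bar w)$ can be lifted to a sequence in $\Theta$ whose $y$-components subconverge to some $\bar y\in\Xi(\bar z,\bar w)$; this is precisely what is needed to show
\[
	\mathcal N_{\gph(S_2\circ S_1)}(\bar z,\bar w)
	\subset
	\bigcup_{\bar y\in\Xi(\bar z,\bar w)}
		\bigl\{(\xi,-w^*)\,\big|\,(\xi,-w^*,0)\in\mathcal N_{\Theta}(\bar z,\bar w,\bar y)\bigr\},
\]
i.e.\ regular normals at nearby points of $\gph(S_2\circ S_1)$ pull back to regular normals at nearby points of $\Theta$ with vanishing $y$-component, and one then takes limits. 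So for our chosen $\xi$ there is $\bar y\in\Xi(\bar z,\bar w)$ with $(\xi,-w^*,0)\in\mathcal N_{\Theta}(\bar z,\bar w,\bar y)=\mathcal N_{\Upsilon^{-1}(0,0)}(\bar z,\bar w,\bar y)$.

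The third step is to apply \Cref{Pro:MSRMain} to $\Upsilon$ at $((\bar z,\bar w,\bar y),(0,0))$, which is legitimate by the metric subregularity hypothesis. This produces multipliers $(\eta,\zeta)\in\R^m\times\R^s$ with $(\xi,-w^*,0)\in D^*\Upsilon((\bar z,\bar w,\bar y),(0,0))(\eta,\zeta)$. Unpacking the coderivative of $\Upsilon$ — whose graph is $\{((z,w,y),(u,v))\,|\,u+y\in S_1(z),\ v+w\in S_2(y)\}$ — and using the product/sum structure together with the sum rule for normal cones to product sets, one reads off that
\[
	(\xi,-\eta)\in\mathcal N_{\gph S_1}(\bar z,\bar y),
	\qquad
	(\eta,-\zeta)\in\mathcal N_{\gph S_2}(\bar y,\bar w),
	\qquad
	\zeta=w^*,
\]
the constraint $\zeta=w^*$ coming from the $w$-slot and $\eta$ appearing with opposite signs in the $y$-slots of the two blocks. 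This says exactly $\eta\in D^*S_2(\bar y,\bar w)(w^*)$ and $\xi\in D^*S_1(\bar z,\bar y)(\eta)$, hence $\xi\in\bigl(D^*S_1(\bar z,\bar y)\circ D^*S_2(\bar y,\bar w)\bigr)(w^*)$, which is the claim.

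The main obstacle I expect is the second step: making rigorous the pull-back of limiting normals from $\gph(S_2\circ S_1)$ to $\Theta$ along the non-injective projection. One must work at the level of \emph{regular} normals at perturbed base points (where the projection is locally nice enough thanks to the closed-graph assumptions) and only then pass to the limit, and it is here that inner semicompactness of $\Xi$ is essential to keep the lifted $y$-components from escaping to infinity; without it the union on the right-hand side could fail to capture the limit. The unpacking in the third step is routine once one writes $\gph\Upsilon$ explicitly and notes that $\mathcal N$ of a Cartesian product of graphs is the product of the $\mathcal N$'s, but care is needed with signs and with the bookkeeping of which variable plays the role of the ``image'' slot in each coderivative. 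Since the statement is quoted from \cite[Theorem~5.2]{BenkoMehlitz2020}, I would in practice simply cite that proof, but the sketch above is the route I would reconstruct.
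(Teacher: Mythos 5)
Your proposal is correct and follows essentially the same route as the paper, which itself only sketches the argument and defers to the cited reference: it uses the identities $\gph(S_2\circ S_1)=\dom\Xi$ and $\gph\Xi=\Upsilon^{-1}(0,0)$, pulls limiting normals back from $\dom\Xi$ to $\gph\Xi$ via inner semicompactness, applies \cref{Pro:MSRMain} under metric subregularity of $\Upsilon$, and unpacks $D^*\Upsilon$ componentwise exactly as in the displayed formula following the lemma. Your sign bookkeeping in the final step is consistent with that formula, so no gap remains.
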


In \cite[Theorem~5.2]{BenkoMehlitz2020},
the subregularity assumption is replaced by the equivalent
calmness assumption associated with the inverse mapping of $\Upsilon$ given by
\[
	\Upsilon^{-1}(a,b)=\{(z,w,y) \mv y + a \in S_1(z), w + b \in S_2(y)\}.
\]
Furthermore, note that $\Upsilon$ is slightly different from
the typical feasibility mapping used in the chain rule,
see e.g.\ \cite[proof of Theorem 10.37]{RockafellarWets1998}.
However, it is worth to mention that the respective metric 
subregularity assumptions, which need to be postulated
on these maps in order to infer the chain rule, are actually equivalent
due to \cite[Proposition~2.6]{BenkoMehlitz2020}.

The proof of \cref{lem:chain_rule} is based on the essential relations
\begin{equation}\label{eq:chain_rule_setting}
    \gph (S_2\circ S_1) = \dom \Xi,\qquad\qquad \gph \Xi = \Upsilon^{-1}(0,0).
\end{equation}
Roughly speaking, the inner semicompactness of $\Xi$ provides a connection between
the graph of $S_2\circ S_1$ and the graph of $\Xi$ while the metric subregularity 
of $\Upsilon$ connects the graph of $\Xi$ with the graph of $\Upsilon$ via \cref{Pro:MSRMain},
see \cite[Sections~3 and 5.3]{BenkoMehlitz2020} for details.
The combination of these two assumptions, hence, yields a way to estimate
the coderivative of $S_2\circ S_1$ via the coderivative of $\Upsilon$, namely
\begin{align*}
	&z^* \in D^*(S_2\circ S_1)(\bar z,\bar w)(w^*)\\
	&\quad \Longrightarrow 
	\exists \bar y \in \Xi(\bar z,\bar w),\,\exists\xi_1\in\R^m,\,\exists\xi_2\in\R^s\colon
	\,
	(z^*,-w^*,0) \in D^*\Upsilon((\bar z, \bar w, \bar y),(0,0))(\xi_1,\xi_2).
\end{align*}
Finally, the coderivative of $\Upsilon$ can be expressed via its components as
\[
	D^*\Upsilon((\bar z, \bar w, \bar y),(0,0))(\xi_1,\xi_2) 
	=
	D^*S_1(\bar z,\bar y)(\xi_1) \times \{0\} \times D^*S_2(\bar y,\bar w)(\xi_2)
	\ - \
	(0,\xi_2,\xi_1).\]
This can be seen from \cref{lem:equality_coderivative_products} (b)
(whose proof does not depend on the chain rule)
where one can split $(z,w,y)$ either as $((z,w),y)$
or as $(z,(w,y))$.
The above formulas now readily yield the desired chain rule via
the coderivatives of $S_1$ and $S_2$.

Naturally, in specific situations, one may prefer to use different
intermediate and feasibility mappings satisfying \eqref{eq:chain_rule_setting}.
The question then remains whether one can handle the coderivative
of the chosen feasibility mapping to derive the chain rule without
additional assumptions.

For set-valued mappings $\Gamma_1\colon\R^n\tto\R^{m_1}$ and $\Gamma_2\colon\R^n\tto\R^{m_2}$,
we consider the product mapping $\Gamma\colon\R^n\tto\R^{m_1}\times\R^{m_2}$ given by
\[
	\forall z\in\R^n\colon\quad
		\Gamma(z):=\Gamma_1(z)\times\Gamma_2(z).
\]
For later use, we need to compute or at least estimate the coderivative of this mapping. 
From \cite[Section~5.4]{BenkoMehlitz2020}, we obtain the next result.

\begin{lemma}\label{lem:product_rule}
	Fix $(\bar z,(\bar w_1,\bar w_2))\in\gph\Gamma$.
	Assume that $\widetilde\Upsilon\colon \R^n\times\R^{m_1}\times\R^{m_2}\times\R^n\times\R^n 
	\tto \R^n\times\R^n\times\R^{m_1}\times\R^{m_2}$,
	given by
	\[
		\forall z,q_1,q_2\in\R^n,\,\forall w_1\in\R^{m_1},\,\forall w_2\in\R^{m_2}\colon\quad
				\widetilde\Upsilon(z,w_1,w_2,q_1,q_2):=
					\begin{pmatrix}
                            z-q_1 \\
                            z-q_2	\\
                            \Gamma_1(q_1) - w_1 \\
                            \Gamma_2(q_2) - w_2
                    \end{pmatrix},
	\]
	is metrically subregular at $((\bar z,\bar w_1,\bar w_2,\bar z,\bar z),(0,0,0,0))$.
	Then we have
	\begin{equation}\label{eq:coderivative_product_rule}
		D^*\Gamma(\bar z,(\bar w_1,\bar w_2))(\xi_1,\xi_2)
		\subset 
		D^*\Gamma_1(\bar z,\bar w_1)(\xi_1)+D^*\Gamma_2(\bar z,\bar w_2)(\xi_2).
	\end{equation}
	Moreover, $\widetilde{\Upsilon}$ is metrically subregular
	at $((\bar z,\bar w_1,\bar w_2,\bar z,\bar z),(0,0,0,0))$
	whenever $\Gamma_1$ and $\Gamma_2$ are polyhedral or
	if the qualification condition
	\begin{equation}\label{eq:QualCond_MC}
				D^*\Gamma_1(\bar z,\bar w_1)(0)
				\cap
				\bigl(-D^*\Gamma_2(\bar z,\bar w_2)(0)\bigr)
				=
				\{0\}
	\end{equation}
	holds, in which case $\widetilde{\Upsilon}$ is even metrically regular at the point of interest.
\end{lemma}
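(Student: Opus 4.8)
The plan is to derive \eqref{eq:coderivative_product_rule} by viewing the product mapping $\Gamma$ as a composition that fits the framework of \cref{lem:chain_rule}, or more directly, by recognizing $\gph\Gamma$ as the pre-image of a suitable set under the continuously differentiable ``doubling'' map $z\mapsto(z,z)$ composed with the constraint built from $\Gamma_1$ and $\Gamma_2$. Concretely, note that $\widetilde\Upsilon^{-1}(0,0,0,0)=\{(z,w_1,w_2,q_1,q_2)\mid q_1=q_2=z,\ w_1\in\Gamma_1(z),\ w_2\in\Gamma_2(z)\}$, so the first three components of any point of $\widetilde\Upsilon^{-1}(0)$ range exactly over $\gph\Gamma$. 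The inclusion will then follow from \cref{Pro:MSRMain} applied to $\widetilde\Upsilon$ at the point of interest: metric subregularity of $\widetilde\Upsilon$ gives an outer estimate of $\mathcal N_{\widetilde\Upsilon^{-1}(0)}$ in terms of $D^*\widetilde\Upsilon$, and the coderivative of $\widetilde\Upsilon$ decomposes along its block-diagonal-plus-linear structure (using \cref{lem:equality_coderivative_products}, exactly as in the discussion following \cref{lem:chain_rule}). Reading off the $z$-component and using that the two ``doubling'' constraints $z-q_1$, $z-q_2$ contribute the transpose of the doubling Jacobian — i.e.\ they sum the two contributions $D^*\Gamma_1(\bar z,\bar w_1)(\xi_1)$ and $D^*\Gamma_2(\bar z,\bar w_2)(\xi_2)$ — yields precisely \eqref{eq:coderivative_product_rule}.

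For the two sufficient conditions, the first (polyhedrality) is immediate: if $\Gamma_1$ and $\Gamma_2$ are polyhedral, then each of the four component mappings defining $\widetilde\Upsilon$ is polyhedral, hence $\gph\widetilde\Upsilon$ is a finite union of convex polyhedral sets, so $\widetilde\Upsilon$ is metrically subregular at every point of its graph by the cited Robinson result. For the second, I would verify that \eqref{eq:QualCond_MC} is exactly the Mordukhovich criterion $\ker D^*\widetilde\Upsilon((\bar z,\bar w_1,\bar w_2,\bar z,\bar z),0)=\{0\}$ for metric regularity of $\widetilde\Upsilon$ at the point of interest. Using again the product-plus-linear decomposition of $D^*\widetilde\Upsilon$, the kernel condition $0\in D^*\widetilde\Upsilon(\cdots)(\zeta_1,\zeta_2,\mu_1,\mu_2)$ forces (from the $q_1,q_2,w_1,w_2$ blocks) $\zeta_1\in D^*\Gamma_1(\bar z,\bar w_1)(\mu_1)$ with $\zeta_1=0$ and similarly $\zeta_2\in D^*\Gamma_2(\bar z,\bar w_2)(\mu_2)$ with $\zeta_2=0$ wait — more carefully, the $z$-block gives $\zeta_1+\zeta_2=0$, the $q_i$-blocks give $-\zeta_i\in D^*\Gamma_i(\bar z,\bar w_i)(\mu_i)$, i.e.\ $\mu_i\in D^*\Gamma_i(\bar z,\bar w_i)(-\zeta_i)$ after relabelling, and the $w_i$-blocks force $\mu_i=0$; combining, $\zeta_1\in D^*\Gamma_1(\bar z,\bar w_1)(0)$, $-\zeta_1=\zeta_2\in D^*\Gamma_2(\bar z,\bar w_2)(0)$, so $\zeta_1\in D^*\Gamma_1(\bar z,\bar w_1)(0)\cap(-D^*\Gamma_2(\bar z,\bar w_2)(0))=\{0\}$ by \eqref{eq:QualCond_MC}, giving the full kernel $=\{0\}$. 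Since metric regularity implies metric subregularity, this both establishes the claim and the ``even metrically regular'' addendum.

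The main obstacle I anticipate is purely bookkeeping: getting the block structure of $\mathcal N_{\gph\widetilde\Upsilon}$ right, i.e.\ correctly applying \cref{lem:equality_coderivative_products} to the five-block domain $(z,w_1,w_2,q_1,q_2)$ and four-block range, and tracking the signs and transposes coming from the linear parts $z-q_1$ and $z-q_2$. One has to be careful that it is $D^*$ of the (trivial) linear maps — not their plain Jacobians — that enters, though of course for smooth maps these agree. Once the decomposition $D^*\widetilde\Upsilon(\cdots)(\xi)$ is written out explicitly, both the inclusion \eqref{eq:coderivative_product_rule} and the Mordukhovich-criterion computation are essentially a matter of reading off the right components; no genuinely hard estimate is involved, since all the analytic content (the pre-image rule under metric subregularity) is already packaged in \cref{Pro:MSRMain}.
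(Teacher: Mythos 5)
Your proposal is correct, and it is essentially the intended argument: the paper does not prove this lemma itself but imports it from \cite[Section~5.4]{BenkoMehlitz2020}, and the route you describe — decoupling the shared variable $z$ into copies $q_1,q_2$ so that $\gph\widetilde\Upsilon$ factors into $\gph\Gamma_1$, $\gph\Gamma_2$ and graphs of linear maps, then applying \cref{Pro:MSRMain} to $\widetilde\Upsilon$ and reading off the blocks — is exactly the mechanism that reference (and the paper's own discussion following \cref{lem:chain_rule}) uses. Two small points deserve tightening. First, the passage from $\mathcal N_{\widetilde\Upsilon^{-1}(0,0,0,0)}$ at the lifted point back to $\mathcal N_{\gph\Gamma}(\bar z,(\bar w_1,\bar w_2))$ is not mere ``reading off'': one has $\gph\Gamma=L^{-1}\bigl(\widetilde\Upsilon^{-1}(0,0,0,0)\bigr)$ for the injective linear embedding $L(z,w_1,w_2):=(z,w_1,w_2,z,z)$, and since $\widetilde\Upsilon^{-1}(0,0,0,0)\subset\rge L$ the mapping $u\tto Lu-\widetilde\Upsilon^{-1}(0,0,0,0)$ is automatically metrically subregular, so the pre-image rule gives $\mathcal N_{\gph\Gamma}(\bar z,(\bar w_1,\bar w_2))\subset L^\top\mathcal N_{\widetilde\Upsilon^{-1}(0,0,0,0)}(L(\bar z,\bar w_1,\bar w_2))$; the adjoint $L^\top$ is what adds the $z$-, $q_1$- and $q_2$-components and produces the sum $D^*\Gamma_1(\bar z,\bar w_1)(\xi_1)+D^*\Gamma_2(\bar z,\bar w_2)(\xi_2)$ in \eqref{eq:coderivative_product_rule}. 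Second, your Mordukhovich-criterion computation contains a transient sign slip (the $q_i$-blocks yield $\zeta_i\in D^*\Gamma_i(\bar z,\bar w_i)(\mu_i)$, not $-\zeta_i$), but this is immaterial because the set in \eqref{eq:QualCond_MC} is invariant under the sign flip induced by $\zeta_2=-\zeta_1$, so the conclusion $\ker D^*\widetilde\Upsilon=\{0\}$ and hence metric regularity stands; the polyhedral case via Robinson's theorem is verified correctly as well.
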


The qualification condition \eqref{eq:QualCond_MC} is clearly satisfied
if one of the mappings $\Gamma_i$, $i = 1, 2$, possesses the Aubin property at
$(\bar z,\bar w_i)$. In the following lemmas,
we review more results from \cite[Section~5.4]{BenkoMehlitz2020}
concerning some exemplary settings where \eqref{eq:QualCond_MC} holds naturally.
\begin{lemma}\label{lem:coderivative_product}
	Fix $(\bar z,(\bar w_1,\bar w_2))\in\gph\Gamma$ and assume 
	that one of the following conditions holds.
	Then \eqref{eq:QualCond_MC} is satisfied and, thus, the estimate
	\eqref{eq:coderivative_product_rule} is valid.
	\begin{enumerate}
	 \item[(a)] There are a locally Lipschitz continuous function 
            $\gamma\colon\R^n\to\R^{m_2}$
			as well as a closed set $\Omega\subset\R^{m_2}$ such that $\Gamma_2$ is given by
			$\Gamma_2(z):=\gamma(z)-\Omega$ for all $z\in\R^n$
			(an analogous statements holds if $\,\Gamma_1$ admits
			such a representation).
	 \item[(b)] The variables $z$ can be decomposed as $z=(z_1,z_2)\in\R^{n_1}\times\R^{n_2}$.
	 	Furthermore, there are set-valued mappings
	 	$\widetilde\Gamma_1\colon\R^{n_1}\tto\R^{m_1}$ and $\widetilde\Gamma_2\colon\R^{n_2}\tto\R^{m_2}$
	 	as well as locally Lipschitz continuous functions
	 	$\gamma_1\colon\R^{n_2}\to\R^{m_1}$ and $ \gamma_2\colon\R^{n_1}\to\R^{m_2}$
	 	such that
	 	\[
	 		\forall z=(z_1,z_2)\in\R^{n_1}\times\R^{n_2}\colon\quad
	 		\Gamma_1(z):=\widetilde\Gamma_1(z_1) +  \gamma_1(z_2),
	 		\quad
	 		\Gamma_2(z):=\widetilde\Gamma_2(z_2)  + \gamma_2(z_1).
	 	\]
	\end{enumerate}
\end{lemma}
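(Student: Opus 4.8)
The plan is to verify the qualification condition \eqref{eq:QualCond_MC} in each of the two cases, after which the estimate \eqref{eq:coderivative_product_rule} follows immediately from \cref{lem:product_rule}. In both cases the key observation is that the structural assumptions force one of the coderivative terms $D^*\Gamma_i(\bar z,\bar w_i)(0)$ to be small enough (or compatibly shaped) that its intersection with $-D^*\Gamma_{3-i}(\bar z,\bar w_{3-i})(0)$ is trivial. For case (a), the crucial point is that for a mapping of feasibility type $\Gamma_2(z)=\gamma(z)-\Omega$ with $\gamma$ locally Lipschitz continuous and $\Omega$ closed, the graph is $\gph\Gamma_2=\{(z,w)\mid \gamma(z)-w\in\Omega\}$, which is the preimage of $\Omega$ under the Lipschitz map $(z,w)\mapsto\gamma(z)-w$; the presence of the full $-w$ block means the coderivative $D^*\Gamma_2(\bar z,\bar w_2)(\xi_2)$ is empty unless $\xi_2$ lies in $\mathcal N_\Omega(\gamma(\bar z)-\bar w_2)$, and in particular one should show that $D^*\Gamma_2(\bar z,\bar w_2)(0)\subset\partial\langle 0,\gamma\rangle(\bar z)=\{0\}$ — i.e. the homogeneous coderivative at zero collapses to $\{0\}$ because the $w$-derivative is a full-rank (negative identity) block. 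Then \eqref{eq:QualCond_MC} holds trivially since one side of the intersection is $\{0\}$.

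For case (b), the variable splitting $z=(z_1,z_2)$ is the heart of the matter. I would compute the coderivatives of the two composite mappings from the additive structure: since $\Gamma_1(z_1,z_2)=\widetilde\Gamma_1(z_1)+\gamma_1(z_2)$ with $\gamma_1$ single-valued and locally Lipschitz, a sum rule (or direct graph computation) gives that any $\xi\in D^*\Gamma_1(\bar z,\bar w_1)(0)$ has the form $\xi=(\xi^{(1)},\xi^{(2)})$ with $\xi^{(1)}\in D^*\widetilde\Gamma_1(\bar z_1,\cdot)(0)$ and $\xi^{(2)}\in\partial\langle 0,\gamma_1\rangle(\bar z_2)=\{0\}$; hence $D^*\Gamma_1(\bar z,\bar w_1)(0)\subset \R^{n_1}\times\{0\}$. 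Symmetrically, $D^*\Gamma_2(\bar z,\bar w_2)(0)\subset\{0\}\times\R^{n_2}$. The intersection of a subset of $\R^{n_1}\times\{0\}$ with the negative of a subset of $\{0\}\times\R^{n_2}$ is contained in $(\R^{n_1}\times\{0\})\cap(\{0\}\times\R^{n_2})=\{0\}$, so \eqref{eq:QualCond_MC} follows.

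The main obstacle I anticipate is not conceptual but bookkeeping: making the coderivative sum/splitting rules precise for set-valued summands composed with single-valued Lipschitz functions, and keeping track of which block of the product space each component lives in. In case (a) one must be slightly careful that $\gamma$ is only Lipschitz, not differentiable, so the argument that the $w$-block contributes a full negative-identity derivative has to be phrased via the graph of $\Gamma_2$ directly rather than via a Jacobian — but since the $w$-dependence is exactly $-w$, this is clean. In case (b) the only subtlety is that $\widetilde\Gamma_1$ and $\widetilde\Gamma_2$ are genuinely set-valued, so one cannot use a classical chain rule; instead one observes that $\gph\Gamma_1$ is the image of $\gph\widetilde\Gamma_1\times\R^{n_2}$ under the smooth shift $(z_1,w,z_2)\mapsto(z_1,z_2,w+\gamma_1(z_2))$ and reads off the coderivative at $0$ from there, which forces the claimed block structure. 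Once these two structural facts are in place, invoking \cref{lem:product_rule} closes the argument, and the ``even metrically regular'' conclusion of that lemma is inherited automatically.
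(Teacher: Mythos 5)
Your plan is correct. Note first that the paper does not actually prove this lemma: it is quoted from \cite[Section~5.4]{BenkoMehlitz2020}, so there is no in-paper proof to compare against; what follows is an assessment of your argument on its own terms. In case (a) your reasoning amounts to showing that $\Gamma_2$ has the Aubin property at $(\bar z,\bar w_2)$ (equivalently, $D^*\Gamma_2(\bar z,\bar w_2)(0)=\{0\}$ by the Mordukhovich criterion), which indeed forces \eqref{eq:QualCond_MC} since $0$ always belongs to both coderivative sets; this is exactly the situation the paper flags right after \cref{lem:product_rule}, and for $\Gamma_2(z)=\gamma(z)-\Omega$ with $\gamma$ Lipschitz the Aubin property can even be verified directly from the definition, bypassing any preimage rule. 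In case (b) your block-separation claim $D^*\Gamma_1(\bar z,\bar w_1)(0)\subset\R^{n_1}\times\{0\}$ and $D^*\Gamma_2(\bar z,\bar w_2)(0)\subset\{0\}\times\R^{n_2}$ is the right key fact and does yield \eqref{eq:QualCond_MC}. The one step you should spell out is that, since $\gamma_1$ is only Lipschitz and the limiting normal cone is a limit of regular normals $(\xi^{(1)}_k,\xi^{(2)}_k,-\eta_k)$ with $\eta_k\to 0$ but $\eta_k\neq 0$ in general, you need the quantitative bound $\lVert\xi^{(2)}_k\rVert\leq L\lVert\eta_k\rVert$ at each regular normal (obtained by testing against the feasible perturbations $w'=w-\gamma_1(z_2)+\gamma_1(z_2')$) before passing to the limit; the qualitative statement $\partial\langle 0,\gamma_1\rangle=\{0\}$ alone does not survive the limiting process. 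With that refinement both cases are complete and \cref{lem:product_rule} finishes the argument as you say.
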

We note that, in principle, the setting (a) can be viewed as an extreme
case of setting (b) with $n_1=n$ and $n_2=0$.
Naturally, this does not fit formally, but one could regard
a set as a set-valued mapping from a zero-dimensional space
and propose a suitable relation for its coderivative via
the normal cone to the set.

In the specific settings outlined above, 
we can, in fact, get equality in \eqref{eq:coderivative_product_rule}
instead of inclusion if we strengthen the Lipschitzness
of the single-valued parts to continuous differentiability,
see \cite[Lemma~5.7]{BenkoMehlitz2020}.
\begin{lemma}\label{lem:equality_coderivative_products}
 	Fix $(\bar z,(\bar w_1,\bar w_2))\in\gph\Gamma$.
 	Then the following statements hold.
 	\begin{enumerate}
  \item[(a)] Consider the setting (a) from \cref{lem:coderivative_product} and assume
  		that the function $\gamma$ is continuously differentiable. Then we have
  		\[
  			D^*\Gamma(\bar z,(\bar w_1,\bar w_2))(\xi_1,\xi_2) =
  				\begin{cases}
            		D^*\Gamma_1(\bar z,\bar w_1)(\xi_1) + \nabla\gamma(\bar z)^\top\xi_2
            			& \text{if } \xi_2 \in \mathcal N_{\Omega}(\gamma(\bar z)-\bar w_2), \\
            		\varnothing 
            			& \text	{otherwise.}
            	\end{cases}
  		\]
  \item[(b)] Consider the setting (b) from \cref{lem:coderivative_product} and assume
  		that the functions 	$\gamma_1$ and $\gamma_2$ are continuously differentiable.
	 	Then we have
  		\[
  			D^*\Gamma(\bar z,(\bar w_1,\bar w_2))(\xi_1,\xi_2) =
	 					\begin{pmatrix}
                            D^*\widetilde\Gamma_1(\bar z_1,\bar w_1-\gamma_1(\bar z_2))(\xi_1) 
                            	+ \nabla\gamma_2(\bar z_1)^\top\xi_2 \\
                            D^*\widetilde\Gamma_2(\bar z_2,\bar w_2-\gamma_2(\bar z_1))(\xi_2) 
                            	+ \nabla\gamma_1(\bar z_2)^\top\xi_1
                        \end{pmatrix}.
        \]
 \end{enumerate}
\end{lemma}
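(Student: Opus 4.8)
The plan is to notice that, in both settings, the graph of $\Gamma$ is the image of a Cartesian product of graphs under a global $C^1$-diffeomorphism of the ambient space, and then to transport the limiting normal cone through this diffeomorphism. Two facts are used: the product rule $\mathcal N_{A_1\times A_2}(\bar a_1,\bar a_2)=\mathcal N_{A_1}(\bar a_1)\times\mathcal N_{A_2}(\bar a_2)$ for limiting normal cones, and the standard change-of-variables rule stating that if $\Phi$ is a $C^1$-diffeomorphism with $\Phi(\gph\Gamma)=C$, then $\mathcal N_{\gph\Gamma}(\bar x)=\nabla\Phi(\bar x)^\top\mathcal N_C(\Phi(\bar x))$; the latter holds because a $C^1$-diffeomorphism pushes tangent cones forward by its (invertible, continuously varying) derivative, hence pushes regular normal cones forward by the transposed inverse derivative, and this survives the limiting process defining $\mathcal N$. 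Once $\mathcal N_{\gph\Gamma}$ is computed, the coderivative formula follows by unravelling $\zeta\in D^*\Gamma(\bar z,(\bar w_1,\bar w_2))(\xi_1,\xi_2)\Leftrightarrow(\zeta,-\xi_1,-\xi_2)\in\mathcal N_{\gph\Gamma}$.

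For setting (a) with $\gamma$ continuously differentiable, I would take $\Phi(z,w_1,w_2):=(z,w_1,\gamma(z)-w_2)$ on $\R^n\times\R^{m_1}\times\R^{m_2}$; its inverse is $(a,b,c)\mapsto(a,b,\gamma(a)-c)$, so $\Phi$ is a $C^1$-diffeomorphism, and clearly $\Phi(\gph\Gamma)=\gph\Gamma_1\times\Omega$. Transposing the block-triangular Jacobian $\nabla\Phi(\bar z,\bar w_1,\bar w_2)$ (its only off-diagonal block being $\nabla\gamma(\bar z)^\top$), the two facts above show that $\mathcal N_{\gph\Gamma}(\bar z,\bar w_1,\bar w_2)$ is the set of all vectors $(\mu+\nabla\gamma(\bar z)^\top\nu,\,\mu_1,\,-\nu)$ with $(\mu,\mu_1)\in\mathcal N_{\gph\Gamma_1}(\bar z,\bar w_1)$ and $\nu\in\mathcal N_\Omega(\gamma(\bar z)-\bar w_2)$. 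Feeding $(\zeta,-\xi_1,-\xi_2)$ into this description forces $\mu_1=-\xi_1$ and $\nu=\xi_2$; the latter is possible only when $\xi_2\in\mathcal N_\Omega(\gamma(\bar z)-\bar w_2)$ (otherwise $D^*\Gamma(\bar z,(\bar w_1,\bar w_2))(\xi_1,\xi_2)=\varnothing$), and then $\zeta=\mu+\nabla\gamma(\bar z)^\top\xi_2$ with $\mu\in D^*\Gamma_1(\bar z,\bar w_1)(\xi_1)$, which is exactly the asserted formula.

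Setting (b) is treated identically, now with $\Phi(z_1,z_2,w_1,w_2):=(z_1,\,w_1-\gamma_1(z_2),\,z_2,\,w_2-\gamma_2(z_1))$ regarded as a map onto $(\R^{n_1}\times\R^{m_1})\times(\R^{n_2}\times\R^{m_2})$; this is a $C^1$-diffeomorphism precisely because $\gamma_1$ and $\gamma_2$ are (its inverse is $(a_1,b_1,a_2,b_2)\mapsto(a_1,a_2,b_1+\gamma_1(a_2),b_2+\gamma_2(a_1))$), and it satisfies $\Phi(\gph\Gamma)=\gph\widetilde\Gamma_1\times\gph\widetilde\Gamma_2$. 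Here $\nabla\Phi^\top$ carries two off-diagonal blocks built from $\nabla\gamma_1(\bar z_2)^\top$ and $\nabla\gamma_2(\bar z_1)^\top$; running the same matching as in (a) --- the sign changes from the $-\gamma_i$ terms in $\Phi$ being absorbed against the $-\xi_i$ in the normal-to-coderivative relation --- pins the two inner dual variables to $-\xi_1$ and $-\xi_2$ and yields $\zeta_1\in D^*\widetilde\Gamma_1(\bar z_1,\bar w_1-\gamma_1(\bar z_2))(\xi_1)+\nabla\gamma_2(\bar z_1)^\top\xi_2$ together with $\zeta_2\in D^*\widetilde\Gamma_2(\bar z_2,\bar w_2-\gamma_2(\bar z_1))(\xi_2)+\nabla\gamma_1(\bar z_2)^\top\xi_1$. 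No genuine emptiness alternative appears here beyond the one already contained in $D^*\widetilde\Gamma_1$ and $D^*\widetilde\Gamma_2$, since nothing constrains $\xi_1,\xi_2$ a priori.

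There is no real analytic difficulty here: the only non-elementary ingredient is the change-of-variables rule for the limiting normal cone under a $C^1$-diffeomorphism, which is standard. The step I would watch most carefully is the linear-algebra bookkeeping when transposing $\nabla\Phi$ and matching components --- it is easy to misplace an off-diagonal block or a sign (the cancellation in (b) being a case in point), which would corrupt the final formula. Alternatively, one could read off the inclusion $D^*\Gamma(\bar z,(\bar w_1,\bar w_2))(\xi_1,\xi_2)\subseteq D^*\Gamma_1(\bar z,\bar w_1)(\xi_1)+D^*\Gamma_2(\bar z,\bar w_2)(\xi_2)$ for free from \cref{lem:coderivative_product} (together with the special form of the single-valued parts) and then establish the reverse inclusion directly from the limiting definitions; the diffeomorphism argument, however, delivers both inclusions --- and, in (a), the emptiness alternative --- simultaneously, so I would favor it.
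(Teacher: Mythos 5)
Your argument is correct, and it is worth noting that the paper itself offers no proof of this lemma at all: it simply imports the statement from an external reference (\cite[Lemma~5.7]{BenkoMehlitz2020}), so there is nothing in the present manuscript to match your derivation against line by line. What you supply is a clean, self-contained route: in both settings the graph of $\Gamma$ is carried by an explicit $C^1$-diffeomorphism (in fact an involution in case (a)) onto a Cartesian product of closed sets, so the exact product formula $\mathcal N_{A_1\times A_2}=\mathcal N_{A_1}\times\mathcal N_{A_2}$ combined with the change-of-variables rule $\mathcal N_{\gph\Gamma}(\bar x)=\nabla\Phi(\bar x)^\top\mathcal N_{\Phi(\gph\Gamma)}(\Phi(\bar x))$ delivers \emph{equality} directly, without any subregularity hypothesis --- in contrast to the surrounding machinery of \cref{lem:product_rule,lem:coderivative_product}, which only yields the inclusion \eqref{eq:coderivative_product_rule} under a qualification condition. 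I checked the linear-algebra bookkeeping you flagged as the delicate step: in (a) the transposed Jacobian has the single off-diagonal block $\nabla\gamma(\bar z)^\top$ in the $(z,w_2)$ position and the $-I$ block forces $\nu=\xi_2$, producing exactly the membership condition $\xi_2\in\mathcal N_\Omega(\gamma(\bar z)-\bar w_2)$ and the emptiness alternative; in (b) the two minus signs coming from $w_i-\gamma_i(\cdot)$ cancel against the $-\xi_i$ in $(\zeta,-\xi_1,-\xi_2)\in\mathcal N_{\gph\Gamma}$, giving the stated $+\nabla\gamma_2(\bar z_1)^\top\xi_2$ and $+\nabla\gamma_1(\bar z_2)^\top\xi_1$ terms with the correct base points $\bar w_1-\gamma_1(\bar z_2)$ and $\bar w_2-\gamma_2(\bar z_1)$. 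The only ingredient you use beyond elementary manipulation is the diffeomorphism rule for limiting normal cones, which is standard (cf.\ \cite[Exercise~6.7]{RockafellarWets1998}) and correctly justified by your tangent-cone/limiting argument; the proof is complete as written.
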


Up to now, we discussed the auxiliary issue of how to
guarantee metric subregularity of $\widetilde{\Upsilon}$,
while the main target is to justify the upper estimate for the coderivative of $\Gamma$.
Later, we will also employ metric subregularity of the product mapping $\Gamma$ itself, 
which is, however, often a more difficult issue.
We conclude the preliminary part by a simple result regarding
how to ensure metric subregularity of $\Gamma$ from
metric subregularity of its factors $\Gamma_1$ and $\Gamma_2$.
Let us mention that there exists a much deeper result by Klatte and Kummer, see
\cite[Theorem~3.6]{KlatteKummer2002}, on calmness of an intersection of mappings,
which can be equivalently stated in terms of metric subregularity of a product of mappings,
and which has several very interesting applications.
In particular, since $\widetilde{\Upsilon}$ also has the structure of a product,
it can be utilized even in situations when \eqref{eq:QualCond_MC} fails to hold.
Moreover, as demonstrated in \cite{AdamHenrionOutrata2018}, it applies quite nicely
to the setting discussed in \cref{sec:bilevel_programming}.
Nevertheless, for the purposes of this paper, we will only need the following result.

\begin{lemma}\label{lem:metric_subregularity_of_productsII}
	Fix $(\bar z,(\bar w_1,\bar w_2))\in\gph\Gamma$ and assume that
	$\Gamma_i$ is metrically subregular at $(\bar z,\bar w_i)$ for $i=1,2$
	and, moreover, that the mapping $\Sigma\colon\R^n\tto\R^{n}\times\R^{n}$ given by
			\begin{equation}\label{eq:mapping_Sigma}
				\forall z\in\R^n\colon\quad
				\Sigma (z):= (z - \Gamma_1^{-1}(\bar w_1)) \times
                            (z - \Gamma_2^{-1}(\bar w_2))
            \end{equation}
			is metrically subregular at $(\bar z,(0,0))$.
			Then $\Gamma$ is metrically subregular at $(\bar z,(\bar w_1,\bar w_2))$.
\end{lemma}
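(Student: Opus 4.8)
The plan is to reduce metric subregularity of the product $\Gamma$ to that of $\Sigma$ by noticing that both mappings have the same solution set at the reference point, and that distances to a Cartesian product split additively because we equip product spaces with the sum norm (cf.\ \cref{sec:notation_preliminaries}). First I would record the elementary identity
\[
	\Gamma^{-1}(\bar w_1,\bar w_2)
	=\Gamma_1^{-1}(\bar w_1)\cap\Gamma_2^{-1}(\bar w_2)
	=\Sigma^{-1}(0,0),
\]
which follows directly from the definitions; note also that $(\bar z,(\bar w_1,\bar w_2))\in\gph\Gamma$ forces $\bar w_i\in\Gamma_i(\bar z)$, hence $(0,0)\in\Sigma(\bar z)$, so all the subregularity hypotheses are posed at admissible points. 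Second, using $\dist(0,z-A)=\dist(z,A)$ together with the sum norm, I would observe that for every $z\in\R^n$
\[
	\dist\bigl((0,0),\Sigma(z)\bigr)
	=\dist\bigl(z,\Gamma_1^{-1}(\bar w_1)\bigr)+\dist\bigl(z,\Gamma_2^{-1}(\bar w_2)\bigr),
	\qquad
	\dist\bigl((\bar w_1,\bar w_2),\Gamma(z)\bigr)
	=\dist\bigl(\bar w_1,\Gamma_1(z)\bigr)+\dist\bigl(\bar w_2,\Gamma_2(z)\bigr).
\]

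With these preparations the argument is a short chain of estimates. Let $\kappa_\Sigma$ and a neighborhood $U_\Sigma$ of $\bar z$ witness metric subregularity of $\Sigma$ at $(\bar z,(0,0))$, and let $\kappa_i$, $U_i$ do the same for $\Gamma_i$ at $(\bar z,\bar w_i)$, $i=1,2$. Then for every $z\in U_\Sigma\cap U_1\cap U_2$ I would chain
\begin{align*}
	\dist\bigl(z,\Gamma^{-1}(\bar w_1,\bar w_2)\bigr)
	&=\dist\bigl(z,\Sigma^{-1}(0,0)\bigr)
	\le\kappa_\Sigma\bigl(\dist(z,\Gamma_1^{-1}(\bar w_1))+\dist(z,\Gamma_2^{-1}(\bar w_2))\bigr)\\
	&\le\kappa_\Sigma\bigl(\kappa_1\dist(\bar w_1,\Gamma_1(z))+\kappa_2\dist(\bar w_2,\Gamma_2(z))\bigr)\\
	&\le\kappa_\Sigma\max\{\kappa_1,\kappa_2\}\,\dist\bigl((\bar w_1,\bar w_2),\Gamma(z)\bigr),
\end{align*}
where the first inequality is metric subregularity of $\Sigma$, the second applies metric subregularity of $\Gamma_1$ and $\Gamma_2$ termwise, and the equalities use the splitting of the distance recorded above. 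This is exactly metric subregularity of $\Gamma$ at $(\bar z,(\bar w_1,\bar w_2))$, with modulus $\kappa_\Sigma\max\{\kappa_1,\kappa_2\}$ and neighborhood $U_\Sigma\cap U_1\cap U_2$.

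I do not expect a genuine obstacle here: the only delicate point is bookkeeping the norm on product spaces so that distances to Cartesian products add (rather than combine as a maximum), which is settled by the convention fixed in \cref{sec:notation_preliminaries}; had a different product norm been used, one would simply pick up harmless equivalence constants. An alternative route would be to invoke the equivalent calmness reformulations and the Klatte--Kummer intersection result referenced above, but the direct computation is both shorter and self-contained.
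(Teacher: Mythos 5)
Your proof is correct and follows essentially the same route as the paper's: the identity $\Gamma^{-1}(\bar w_1,\bar w_2)=\Sigma^{-1}(0,0)$, the additive splitting of distances under the sum norm, and the same chain of estimates yielding the modulus $\kappa_\Sigma\max\{\kappa_1,\kappa_2\}$ on $U_\Sigma\cap U_1\cap U_2$. Your extra remarks on the product-norm convention only make explicit what the paper leaves implicit.
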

\begin{proof}
	By assumption, we find neighborhoods $U,U_1,U_2\subset\R^n$ of $\bar z$
	and reals $\kappa,\kappa_1,\kappa_2>0$ satisfying 
	\[
		\forall z\in U\colon\quad
		\dist(z,\Sigma^{-1}(0,0))\leq\kappa\dist((0,0),\Sigma(z))
	\]
	as well as
	\[
		\forall z\in U_i\colon\quad
		\dist(z,\Gamma_i^{-1}(\bar w_i))\leq\kappa_i\dist(\bar w_i,\Gamma_i(z))
	\]
	for $i=1,2$.
    Due to $\Gamma^{-1}(\bar w_1,\bar w_2)=\Sigma^{-1}(0,0)$, we obtain
    \begin{align*}
     	\dist(z,\Gamma^{-1}(\bar w_1,\bar w_2)) 
     	& \leq 
     	\kappa 	\big( 	\dist(z,\Gamma_1^{-1}(\bar w_1)) +
     					\dist(z,\Gamma_2^{-1}(\bar w_2))
     			\big) 
     			\\
     	& \leq 
     	 \kappa \max\{\kappa_1,\kappa_2\} \dist((\bar w_1,\bar w_2),\Gamma(z))
    \end{align*}
    for each $z\in U\cap U_1\cap U_2$, and this shows the claim.
\end{proof}

Since $\Sigma$ from \eqref{eq:mapping_Sigma} has a simpler structure than $\Gamma$,
its coderivative can be readily computed by \cref{lem:equality_coderivative_products}.
Particularly, we note that metric regularity of $\Sigma$ at $(\bar z,(0,0))$ reads
\[
	\mathcal N_{\Gamma^{-1}_1(\bar w_1)}(\bar z)
	\cap
	\left(-\mathcal N_{\Gamma^{-1}_2(\bar w_2)}(\bar z)\right)
	=
	\{0\},
\]
and, by metric subregularity of $\Gamma_i$ at $(\bar z,\bar w_i)$, $i=1,2$, 
as well as the pre-image rule from \cref{Pro:MSRMain}, the condition
\[
	\rge D^*\Gamma_1(\bar z,\bar w_1)
	\cap
	\left(-\rge D^*\Gamma_2(\bar z,\bar w_2)\right)
	=
	\{0\}
\] 
is sufficient to guarantee its validity.

\section{Abstract analysis with and without implicit variables}
	\label{sec:lambda_as_variable}

In this section, we first address the relationship between the 
minimizers of \eqref{eq:implicit_multipliers} and \eqref{eq:explicit_multipliers},
respectively. Afterwards, we will deal with the derivation of necessary optimality
conditions and constraint qualifications for \eqref{eq:implicit_multipliers} with
the aid of limiting variational analysis. In order to infer optimality conditions
in terms of initial problem data (particularly, in terms of the generalized
derivatives of $F$ and $G$), we use the chain and product rule of coderivative calculus
from \cref{sec:calculus_of_set_valued_maps}.
As we will see, the associated assumptions for their respective application are valid
in several practically relevant settings, which shows that the essential
constraint qualification boils down to one subregularity assumption.
We close the section with some brief remarks
regarding sufficiency of the derived optimality conditions for convex problems of type \eqref{eq:implicit_multipliers}.

\subsection{Solution behavior}\label{sec:relationship_of_P_and_Q}

Here, we want to study the relationship between the local and global minimizers
of \eqref{eq:implicit_multipliers} and \eqref{eq:explicit_multipliers}, respectively.
To this end, the intermediate mapping $K\colon\R^n\tto\R^m$ given by
\[
	\forall z\in\R^n\colon\quad
	K(z):=\{\lambda\in F(z)\,|\,0\in G(z,\lambda)\}
\]
will be of essential importance. Note that we have $Z=M\cap\dom K$ by definition.

\begin{lemma}\label{lem:H_closed_graph}
	If $F$ is a locally bounded set-valued mapping, then $H$ possesses a closed graph.
\end{lemma}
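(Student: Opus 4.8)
The plan is to establish closedness of $\gph H$ by a direct sequential argument. So I would fix an arbitrary sequence $\{(z_k,w_k)\}_{k\in\N}\subset\gph H$ converging to some $(\bar z,\bar w)\in\R^n\times\R^s$ and aim to show $\bar w\in H(\bar z)$. By the very definition of $H$, for each $k$ there is a point $\lambda_k\in F(z_k)$ with $w_k\in G(z_k,\lambda_k)$; equivalently, $(z_k,\lambda_k)\in\gph F$ and $(z_k,\lambda_k,w_k)\in\gph G$ for every $k$. The whole proof then reduces to producing a limit of (a subsequence of) $\{\lambda_k\}$ and passing to the limit in the two closed graphs.

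The key step is to keep the auxiliary sequence $\{\lambda_k\}$ under control, and this is precisely where local boundedness of $F$ enters. It provides, for the limit point $\bar z$, a neighborhood $U$ of $\bar z$ and a bounded set $B\subset\R^m$ such that $F(z)\subset B$ for all $z\in U$ (here one uses that $F$ is locally bounded as a mapping on all of $\R^n$, not merely at points of $\dom F$). Since $z_k\to\bar z$, we have $z_k\in U$, and hence $\lambda_k\in B$, for all sufficiently large $k$; thus $\{\lambda_k\}$ is bounded, and after passing to a subsequence (which I will not relabel) we may assume $\lambda_k\to\bar\lambda$ for some $\bar\lambda\in\R^m$.

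It remains to pass to the limit. Closedness of $\gph F$ applied to $(z_k,\lambda_k)\to(\bar z,\bar\lambda)$ yields $\bar\lambda\in F(\bar z)$, and closedness of $\gph G$ applied to $(z_k,\lambda_k,w_k)\to(\bar z,\bar\lambda,\bar w)$ yields $\bar w\in G(\bar z,\bar\lambda)$. Combining both, $\bar w\in\bigcup_{\lambda\in F(\bar z)}G(\bar z,\lambda)=H(\bar z)$, i.e., $(\bar z,\bar w)\in\gph H$, which is the desired conclusion.

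I do not expect a genuine obstacle here; the argument is a routine compactness-plus-closedness routine, and the only delicate point is the extraction of a convergent subsequence of $\{\lambda_k\}$. It is worth noting, though, that this step really cannot be omitted: if $F$ is only assumed to be locally bounded at points of $\dom F$ (and $\dom F$ is not closed), the multipliers $\lambda_k$ may escape to infinity while $w_k$ stays bounded, and $\gph H$ indeed need not be closed -- one can see this already with $F(x):=\{1/x\}$ for $x>0$ and $F(x):=\varnothing$ otherwise, $G(x,\lambda):=\{x\lambda\}$, for which $\gph H=\{(x,1)\mid x>0\}$ fails to be closed. Hence local boundedness is used in exactly one place and is exactly what makes the union over $\lambda\in F(z)$ behave well under limits.
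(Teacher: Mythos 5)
Your proof is correct and follows essentially the same route as the paper's: extract $\lambda_k\in F(z_k)$ with $w_k\in G(z_k,\lambda_k)$, use local boundedness of $F$ near $\bar z$ to get a convergent subsequence of $\{\lambda_k\}$, and pass to the limit using closedness of $\gph F$ and $\gph G$. Your additional observation that local boundedness must hold on a neighborhood of the limit point (not merely at points of $\dom F$), illustrated by the $F(x)=\{1/x\}$ example, is a valid and worthwhile clarification that the paper's statement leaves implicit.
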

\begin{proof}
	Let $\{(z_k,w_k)\}_{k\in\N}\subset\gph H$ be a sequence converging to some
	$(\bar z,\bar w)\in\R^n\times\R^s$. Then, by definition of $H$, for each $k\in\N$, we find
	$\lambda_k\in F(z_k)$ such that $w_k\in G(z_k,\lambda_k)$ holds. Since
	$F$ is locally bounded at $\bar z$, $\{\lambda_k\}_{k\in\N}$ must be bounded.
	Thus, along a subsequence (without relabeling) it converges to some $\bar\lambda\in\R^m$.
	Recalling that $F$ and $G$ are set-valued mappings with a closed graph,
	$\bar\lambda\in F(\bar z)$ and $\bar w\in G(\bar z,\bar\lambda)$ follow, i.e.,
	$\bar w\in H(\bar z)$ is obtained. This shows the closedness of $\gph H$.
\end{proof}

Exemplary, let us mention that in the setting of \cref{sec:bilevel_programming},
the mapping $F$ is cone-valued and not likely to be locally bounded. However, one
can check that the associated mapping $H$ still possesses a closed graph by closedness of
the normal cone mapping associated with a convex set.

Subsequently, we discuss some properties of the mapping $K$.
\begin{lemma}\label{lem:inner_semicompactness_of_K}
	The following assertions hold.
	\begin{enumerate}
		\item[(a)] The mapping $K$ possesses a closed graph.
		\item[(b)] Fix $\bar z\in\dom K$. If $F$ is locally bounded at $\bar z$, then
			$K$ is inner semicompact at $\bar z$ w.r.t.\ $\dom K$.
	\end{enumerate}
\end{lemma}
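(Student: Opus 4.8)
The plan is to treat the two assertions separately, both via elementary sequential arguments that closely mirror the proof of \cref{lem:H_closed_graph}.

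For part (a), I would start with a sequence $\{(z_k,\lambda_k)\}_{k\in\N}\subset\gph K$ converging to some $(\bar z,\bar\lambda)\in\R^n\times\R^m$. By definition of $K$, each pair satisfies $\lambda_k\in F(z_k)$ and $0\in G(z_k,\lambda_k)$. Closedness of $\gph F$ immediately gives $\bar\lambda\in F(\bar z)$. Then, since $(z_k,\lambda_k,0)\to(\bar z,\bar\lambda,0)$ and $\gph G$ is closed, one obtains $0\in G(\bar z,\bar\lambda)$. Together these yield $\bar\lambda\in K(\bar z)$, i.e., $(\bar z,\bar\lambda)\in\gph K$, proving that the graph is closed. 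The only point worth stressing here is that $\gph G$ must be used as a subset of the product space $\R^n\times\R^m\times\R^s$, so that convergence of both $(z_k,\lambda_k)$ and the constant sequence $\{0\}$ is exploited simultaneously.

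For part (b), I would fix $\bar z\in\dom K$ and a sequence $\{z_k\}_{k\in\N}\subset\dom K$ with $z_k\to\bar z$. For each $k$, pick $\lambda_k\in K(z_k)$; this is possible precisely because $z_k\in\dom K$, and in particular $\lambda_k\in F(z_k)$ and $0\in G(z_k,\lambda_k)$. Local boundedness of $F$ at $\bar z$ supplies a neighborhood $U$ of $\bar z$ and a bounded set $B\subset\R^m$ with $F(z)\subset B$ for every $z\in U$; since $z_k\to\bar z$, we have $z_k\in U$ and hence $\lambda_k\in B$ for all large $k$. Passing to a subsequence then yields indices $\{k_l\}_{l\in\N}$ and a point $\bar\lambda\in\R^m$ with $\lambda_{k_l}\to\bar\lambda$. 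Setting $w_l:=\lambda_{k_l}\in K(z_{k_l})$ produces exactly the convergent selection required by the definition of inner semicompactness of $K$ at $\bar z$ w.r.t.\ $\dom K$.

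I do not expect a genuine obstacle: both parts reduce to extracting a convergent subsequence from a bounded sequence and passing to the limit in a closed graph. The mild care needed is (i) to invoke closedness of $\gph G$ jointly in all variables in part (a), and (ii) to verify that the extracted subsequence in part (b) matches the precise formulation of inner semicompactness --- namely, that it suffices to produce convergent selections along a subsequence of indices rather than along the full sequence.
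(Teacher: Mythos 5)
Your argument is correct and coincides with the paper's own proof: part (a) is the same closed-graph limit passage using the closedness of $\gph F$ and $\gph G$, and part (b) extracts a convergent subsequence from the bounded selection $\lambda_k\in K(z_k)\subset F(z_k)$ exactly as the paper does via the local boundedness of $F$ (hence of $K$). No gaps.
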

\begin{proof}
	Let us start with the proof of assertion (a).
	Let $\{(z_k,\lambda_k)\}_{k\in\N}\subset\gph K$ be chosen such that $z_k\to\bar z$
		and $\lambda_k\to\bar\lambda$ hold.
		By definition, we have $\lambda_k\in F(z_k)$ and $0\in G(z_k,\lambda_k)$ for each $k\in\N$.
		The closedness of $\gph F$ and $\gph G$ yields $\bar\lambda\in F(\bar z)$
		and $0\in G(\bar z,\bar\lambda)$, respectively.
		Hence, we have $\bar\lambda\in K(\bar z)$, i.e., $\gph K$ is closed.\\
	In order to show (b), let $\{z_k\}_{k\in\N}\subset\dom K$ be chosen such that $z_k\to\bar z$ holds.
		Then we find a sequence $\{\lambda_k\}_{k\in\N}\subset\R^m$ such that $\lambda_k\in K(z_k)$
		holds for all $k\in\N$. Noting that $K$ is locally bounded at $\bar z$ since $F$
		enjoys this property by assumption, $\{\lambda_k\}_{k\in\N}$
		is bounded and possesses a convergent subsequence. 
		Thus, $K$ is inner semicompact at $\bar z$ w.r.t.\ $\dom K$.
\end{proof}

In the upcoming theorem, we take a look at the global minimizers 
of \eqref{eq:implicit_multipliers} and \eqref{eq:explicit_multipliers}.
\begin{theorem}
	\label{thm:global_relationship}\
	\begin{enumerate}
		\item[(a)] Let $\bar z\in\R^n$ be a global minimizer of \eqref{eq:implicit_multipliers}.
			Then, for each $\lambda\in K(\bar z)$, $(\bar z,\lambda)$ is a global
			minimizer of \eqref{eq:explicit_multipliers}.
		\item[(b)] Let $(\bar z,\bar\lambda)\in\R^n\times\R^m$ be a global minimizer of
			\eqref{eq:explicit_multipliers}. Then $\bar z$ is a global minimizer
			of \eqref{eq:implicit_multipliers}.
	\end{enumerate}
\end{theorem}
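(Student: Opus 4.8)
The plan is to reduce everything to the elementary observation that the $z$-component of any feasible point of \eqref{eq:explicit_multipliers} is feasible for \eqref{eq:implicit_multipliers}, and conversely that every feasible point of \eqref{eq:implicit_multipliers} can be lifted to a feasible point of \eqref{eq:explicit_multipliers}, while the value $f(z)$ of the objective is unaffected by this passage. Concretely, I would first record the set-theoretic identity that the projection of $\tilde Z$ onto $\R^n$ equals $Z$. Indeed, $z\in Z$ means $z\in M$ and $0\in H(z)=\bigcup_{\lambda\in F(z)}G(z,\lambda)$, which by the very definition of $K$ is the same as $z\in M$ and $K(z)\neq\varnothing$, i.e.\ $z\in M\cap\dom K$; and $(z,\lambda)\in\tilde Z$ holds precisely when $z\in M$ and $\lambda\in K(z)$. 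In particular, for every $z\in Z$ and every $\lambda\in K(z)$ the pair $(z,\lambda)$ lies in $\tilde Z$, and for every $(z,\lambda)\in\tilde Z$ we have $z\in Z$.

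For part (a), let $\bar z$ be a global minimizer of \eqref{eq:implicit_multipliers}. Since $\bar z\in Z$, the set $K(\bar z)$ is nonempty, so fixing any $\lambda\in K(\bar z)$ yields $(\bar z,\lambda)\in\tilde Z$. Now take an arbitrary $(z,\mu)\in\tilde Z$; then $z\in Z$, and the global optimality of $\bar z$ for \eqref{eq:implicit_multipliers} gives $f(\bar z)\leq f(z)$. Since the objective of \eqref{eq:explicit_multipliers} evaluated at $(z,\mu)$ equals $f(z)$, this shows that $(\bar z,\lambda)$ is a global minimizer of \eqref{eq:explicit_multipliers}.

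For part (b), let $(\bar z,\bar\lambda)$ be a global minimizer of \eqref{eq:explicit_multipliers}. By the identity above, $\bar z\in Z$, so $\bar z$ is feasible for \eqref{eq:implicit_multipliers}. For any $z\in Z$ pick $\lambda\in K(z)$, so that $(z,\lambda)\in\tilde Z$; the global optimality of $(\bar z,\bar\lambda)$ then gives $f(\bar z)\leq f(z)$. Hence $\bar z$ is a global minimizer of \eqref{eq:implicit_multipliers}.

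There is no genuine obstacle here: the statement is a direct unwinding of the definitions, and the only point requiring a moment's care is that the objective of \eqref{eq:explicit_multipliers} does not involve $\lambda$, so that minimizing over $(z,\lambda)\in\tilde Z$ amounts to minimizing $f$ over the projection $Z$ of $\tilde Z$ onto $\R^n$. The genuinely delicate phenomena — namely the failure of the analogous statement for \emph{local} minimizers — are addressed separately and are precisely what motivates the remainder of the paper.
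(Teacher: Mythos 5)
Your proof is correct and follows essentially the same route as the paper, which likewise reduces both assertions to the equivalence $z\in Z\Longleftrightarrow z\in M\cap\dom K\Longleftrightarrow\exists\lambda\colon(z,\lambda)\in\tilde Z$ together with the fact that the objective of \eqref{eq:explicit_multipliers} does not depend on $\lambda$. You merely spell out the two directions in more detail than the paper does.
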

\begin{proof}
	Both statements of the theorem follow from the observation that for given $z\in \R^n$,
	we have the equivalences
	\[
		z\in Z\,
		\Longleftrightarrow
		\,z\in M\cap\dom K\,
		\Longleftrightarrow
		\,\exists\lambda\in\R^m\colon\,(z,\lambda)\in\tilde Z,
	\]
	while the objective function of \eqref{eq:explicit_multipliers} does not depend
	on the variable $\lambda$.
\end{proof}

The above theorem shows that the relationship between \eqref{eq:implicit_multipliers}
and \eqref{eq:explicit_multipliers} is straight whenever global minimizers are under
consideration. Thus, whenever these problems actually do not possess local minimizers
which are not globally optimal, then \eqref{eq:implicit_multipliers} and
\eqref{eq:explicit_multipliers} are \emph{equivalent} w.r.t.\ their minimizers in
the sense of \cref{thm:global_relationship}.
\begin{corollary}\label{cor:convex_programs}
	Consider the situation where $F$ and $G$ are convex set-valued mappings.
	One can easily check that this implies convexity of $H$.
	Particularly, whenever $f$ is a convex function and $M$ is a convex set,
	then \eqref{eq:implicit_multipliers} and \eqref{eq:explicit_multipliers}
	are both convex optimization problems (i.e., their objective function
	and feasible set are convex, respectively) whose minimizers correspond to each
	other in the sense of \cref{thm:global_relationship}.
\end{corollary}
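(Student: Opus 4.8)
The plan is to check two independent facts and then combine them: (i) under the stated convexity hypotheses both \eqref{eq:implicit_multipliers} and \eqref{eq:explicit_multipliers} are genuine convex optimization problems, and (ii) for a convex problem every local minimizer is already a global one, so that the minimizer correspondence of \cref{thm:global_relationship} transfers from global to all minimizers. Since \cref{thm:global_relationship} is already available, all the work is in (i), and it is entirely a matter of tracking convexity through graphs.

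First I would prove convexity of $H$ directly at the level of its graph. Take $(z_1,w_1),(z_2,w_2)\in\gph H$ and $\alpha\in[0,1]$. By definition of $H$ there are $\lambda_i\in F(z_i)$ with $w_i\in G(z_i,\lambda_i)$ for $i=1,2$. Setting $\bar z:=\alpha z_1+(1-\alpha)z_2$ and $\bar\lambda:=\alpha\lambda_1+(1-\alpha)\lambda_2$, convexity of $F$ gives $\bar\lambda\in\alpha F(z_1)+(1-\alpha)F(z_2)\subset F(\bar z)$, and convexity of $G$ applied to the pair of arguments $(z_1,\lambda_1),(z_2,\lambda_2)$ gives $\alpha w_1+(1-\alpha)w_2\in\alpha G(z_1,\lambda_1)+(1-\alpha)G(z_2,\lambda_2)\subset G(\bar z,\bar\lambda)$. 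Hence $\alpha w_1+(1-\alpha)w_2\in H(\bar z)$, so $\gph H$ is convex. Next I would read off convexity of the two feasible sets. The feasible set of \eqref{eq:implicit_multipliers} is $Z=M\cap\ker H$, and $\ker H=\{z\in\R^n\mid (z,0)\in\gph H\}$ is the preimage of the convex set $\gph H$ under the affine injection $z\mapsto(z,0)$, hence convex; intersecting with the convex set $M$ keeps it convex. Likewise the feasible set of \eqref{eq:explicit_multipliers} is $\tilde Z=\gph F\cap\{(z,\lambda)\mid 0\in G(z,\lambda)\}\cap(M\times\R^m)$, where the middle set is $\{(z,\lambda)\mid(z,\lambda,0)\in\gph G\}$, again the preimage of the convex $\gph G$ under a coordinate injection, hence convex; so $\tilde Z$ is an intersection of three convex sets. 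Since $f$ is convex and the objective of \eqref{eq:explicit_multipliers} is $(z,\lambda)\mapsto f(z)$, a convex function composed with a linear map, both problems are convex.

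Finally, a convex program has the property that every local minimizer is a global minimizer, so the sets of local minimizers of \eqref{eq:implicit_multipliers} and \eqref{eq:explicit_multipliers} coincide with their respective sets of global minimizers. Invoking \cref{thm:global_relationship} then yields exactly the claimed correspondence via the mapping $K$: each minimizer $\bar z$ of \eqref{eq:implicit_multipliers} produces, for every $\lambda\in K(\bar z)$, a minimizer $(\bar z,\lambda)$ of \eqref{eq:explicit_multipliers}, and conversely each minimizer $(\bar z,\bar\lambda)$ of \eqref{eq:explicit_multipliers} gives the minimizer $\bar z$ of \eqref{eq:implicit_multipliers}. I do not anticipate a genuine obstacle here; the only points that need a little care are to argue throughout with graphs rather than pointwise "convex-valuedness" (so that possibly empty values cause no trouble), and to note that slices and coordinate-injection preimages of a convex graph are convex.
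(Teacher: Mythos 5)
Your proposal is correct and fills in exactly the verification the paper leaves implicit (the corollary is stated without proof as an ``easy check'' following \cref{thm:global_relationship}): the graph-based argument for convexity of $H$, the convexity of $Z$ and $\tilde Z$ as intersections of coordinate-injection preimages of the convex graphs with $M$, and the local-equals-global observation are precisely the intended steps. No gaps.
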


A quite popular and nearby situation which is addressed by \cref{cor:convex_programs} arises
in the context of (conic) linear programming where slack variables are used to transfer a
given problem instance into standard normal form. It is well known that the original problem
and its surrogate are equivalent w.r.t.\ their minimizers.

Next, we investigate the relationship between local minimizers of \eqref{eq:implicit_multipliers} and 
\eqref{eq:explicit_multipliers}. As we will see, this issue is much more delicate in 
comparison with the situation where globally optimal solutions of these programs are under
consideration.
\begin{theorem}
	\label{thm:local_relationship}\
	\begin{enumerate}
		\item[(a)] Let $\bar z\in\R^n$ be a local minimizer of \eqref{eq:implicit_multipliers}.
			Then, for each $\lambda\in K(\bar z)$, $(\bar z,\lambda)$ is a local minimizer
			of \eqref{eq:explicit_multipliers}.
		\item[(b)] Let $(\bar z,\lambda)\in\R^n\times\R^m$ be a local minimizer of
			\eqref{eq:explicit_multipliers} for each $\lambda\in K(\bar z)$.
			Furthermore, let $K$ be inner semicompact at $\bar z$ w.r.t.\ $\dom K$.
			Then $\bar z$ is a local minimizer of \eqref{eq:implicit_multipliers}.
	\end{enumerate}
\end{theorem}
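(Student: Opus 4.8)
The plan is to treat the two parts separately. Part~(a) needs nothing more than a projection argument, so I would first fix $\lambda\in K(\bar z)$ and observe that $(\bar z,\lambda)$ is feasible for \eqref{eq:explicit_multipliers}, since $\bar z\in M$, $\lambda\in F(\bar z)$, and $0\in G(\bar z,\lambda)$. Choosing a neighborhood $U$ of $\bar z$ with $f(z)\ge f(\bar z)$ for all $z\in Z\cap U$, I would then check that every $(z,\mu)\in\tilde Z\cap(U\times\R^m)$ satisfies $\mu\in K(z)$, hence $z\in M\cap\dom K=Z$ and $z\in U$, so $f(z)\ge f(\bar z)$. Because the objective of \eqref{eq:explicit_multipliers} depends on $z$ alone and equals $f(\bar z)$ at $(\bar z,\lambda)$, the pair $(\bar z,\lambda)$ is a local minimizer of \eqref{eq:explicit_multipliers}; note that inner semicompactness of $K$ plays no role here.

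For part~(b) I would argue by contradiction. First, since the hypothesis presupposes $(\bar z,\lambda)\in\tilde Z$ for $\lambda\in K(\bar z)$, we may assume $K(\bar z)\neq\varnothing$ (otherwise $\bar z$ is infeasible and the statement is void), and then any $\lambda\in K(\bar z)$ gives $\bar z\in M\cap\dom K=Z$. Suppose $\bar z$ is not a local minimizer of \eqref{eq:implicit_multipliers}; then there is a sequence $\{z_k\}_{k\in\N}\subset Z$ with $z_k\to\bar z$ and $f(z_k)<f(\bar z)$ for every $k$. Because $Z\subset\dom K$, inner semicompactness of $K$ at $\bar z$ w.r.t.\ $\dom K$ furnishes a subsequence $\{z_{k_l}\}_{l\in\N}$ and a convergent sequence $\{\mu_l\}_{l\in\N}$ with $\mu_l\in K(z_{k_l})$ for all $l$; writing $\bar\mu:=\lim_l\mu_l$, the closedness of $\gph K$ from \cref{lem:inner_semicompactness_of_K}\,(a) yields $\bar\mu\in K(\bar z)$.

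It remains to produce the contradiction: each $(z_{k_l},\mu_l)$ lies in $\tilde Z$ (as $z_{k_l}\in M$, $\mu_l\in F(z_{k_l})$, and $0\in G(z_{k_l},\mu_l)$) and converges to $(\bar z,\bar\mu)$, while by hypothesis $(\bar z,\bar\mu)$ is a local minimizer of \eqref{eq:explicit_multipliers}; hence $f(z_{k_l})\ge f(\bar z)$ for all sufficiently large $l$, contradicting $f(z_{k_l})<f(\bar z)$. I expect the only genuine obstacle to be exactly the one the inner semicompactness hypothesis is designed to remove: a minimizing sequence $z_k\to\bar z$ for \eqref{eq:implicit_multipliers} could in principle be accompanied only by multipliers $\mu_k\in K(z_k)$ escaping to infinity, in which case one cannot pass to a limit pair at which local optimality in \eqref{eq:explicit_multipliers} can be invoked; inner semicompactness together with the closed graph of $K$ precisely guarantees a convergent multiplier subsequence whose limit still lies in $K(\bar z)$.
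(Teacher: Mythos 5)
Your proposal is correct and follows essentially the same route as the paper: part (a) rests on the observation that the $z$-projection of $\tilde Z$ is $Z$ while the objective of \eqref{eq:explicit_multipliers} ignores $\lambda$ (you phrase this directly with the cylinder neighborhood $U\times\R^m$, the paper by contradiction, but the content is identical), and part (b) is the same contradiction argument combining inner semicompactness of $K$ with the closedness of $\gph K$ from \cref{lem:inner_semicompactness_of_K} to extract a convergent multiplier subsequence whose limit lies in $K(\bar z)$. Your extra remark on why $\bar z$ is feasible in (b) is a slightly more careful justification of a step the paper states without comment.
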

\begin{proof}
	For the proof of (a), suppose that, for some local minimizer $\bar z\in Z$ of
	\eqref{eq:implicit_multipliers}, there is some $\bar\lambda\in K(\bar z)$ 
	such that $(\bar z,\bar\lambda)$ is not a
	local minimizer of \eqref{eq:explicit_multipliers}.
	Then there is a sequence $\{(z_k,\lambda_k)\}_{k\in\N}\subset\tilde Z$ 
	such that $z_k\to\bar z$, $\lambda_k\to\bar\lambda$, and
	$f(z_k)<f(\bar z)$ holds for each $k\in\N$.
	Since we have $z_k\in M\cap\dom K$ by definition of $K$, $z_k$ is feasible to
	\eqref{eq:implicit_multipliers} for each $k\in\N$.
	Thus, $\bar z$ cannot be a local minimizer of \eqref{eq:implicit_multipliers}
	which is a contradiction.
	This shows (a).\\			
	For the proof of (b), let $(\bar z,\lambda)\in\R^n\times\R^m$ be a local minimizer of
			\eqref{eq:explicit_multipliers}
			for each $\lambda\in K(\bar z)$ and let $K$ be inner semicompact at $\bar z$
			w.r.t.\ $\dom K$.
			From $\bar z\in M\cap\dom K$ we infer the feasibility of $\bar z$ to \eqref{eq:implicit_multipliers}.
			Suppose now that $\bar z$ is not a local minimizer of \eqref{eq:implicit_multipliers}.
			Then we find a sequence $\{z_k\}_{k\in\N}\subset Z$ such that $z_k\to\bar z$ 
			while $f(z_k)<f(\bar z)$ is satisfied for all $k\in\N$.
			From $z_k\in Z$ we deduce $z_k\in M\cap\dom K$ for each $k\in\N$.
			Due to the inner semicompactness of $K$ at $\bar z$ w.r.t.\ $\dom K$, 
			there are a subsequence $\{z_{k_l}\}_{l\in\N}$ and a sequence 
			$\{\lambda_{l}\}_{l\in\N}\subset\R^m$ converging to some $\bar\lambda\in\R^m$
			such that $\lambda_{l}\in K(z_{k_l})$ holds for all $l\in\N$ .
			Since we have $z_{k_l}\to\bar z$, we can deduce $\bar\lambda\in K(\bar z)$ from the closedness
			of $\gph K$, see \cref{lem:inner_semicompactness_of_K}. 
			Thus, the points from $\{(z_{k_l},\lambda_{l})\}_{l\in\N}$ as well as
			$(\bar z,\bar\lambda)$ are feasible to \eqref{eq:explicit_multipliers}.
			Since we have $f(z_{k_l})<f(\bar z)$ for all $l\in\N$, $(\bar z,\bar\lambda)$ cannot be a
			local minimizer of \eqref{eq:explicit_multipliers}.
			This, however, contradicts our assumptions.
\end{proof}

Naturally, when dealing with the reformulated problem
\eqref{eq:explicit_multipliers}, one can hardly expect to find
$\bar z$ such that $(\bar z,\lambda)$ is a local minimizer
{\em for all} $\lambda \in K(\bar z)$.
Typically, one simply finds a point $(\bar z,\bar\lambda)$
which is locally optimal (even this is often too optimistic
and one only gets a stationary point instead).
This underlines that the approach via problem
\eqref{eq:explicit_multipliers} is indeed quite far from ideal.
The proof of the previous theorem yields that
the situation can be saved if the stronger inner semicontinuity of $K$
is assumed. 

\begin{corollary}
	\label{cor:local_relationship}
	Let $(\bar z,\bar\lambda)\in\R^n\times\R^m$ be a local minimizer of
	\eqref{eq:explicit_multipliers} and
	let $K$ be inner semicontinuous at $(\bar z,\bar\lambda)$
	w.r.t.\ $\dom K$.
	Then $\bar z$ is a local minimizer of \eqref{eq:implicit_multipliers}.
\end{corollary}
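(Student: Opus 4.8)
The plan is to mimic the proof of part~(b) of \cref{thm:local_relationship} verbatim, the only change being that the use of inner semicompactness of $K$ at $\bar z$ is replaced by the (stronger, pointwise) inner semicontinuity of $K$ at the prescribed point $(\bar z,\bar\lambda)$. First I would record the trivial feasibility observations: since $(\bar z,\bar\lambda)\in\tilde Z$, we have $\bar\lambda\in F(\bar z)$ and $0\in G(\bar z,\bar\lambda)$, hence $\bar\lambda\in K(\bar z)$; in particular $\bar z\in M\cap\dom K=Z$, so $\bar z$ is feasible to \eqref{eq:implicit_multipliers}. Note also that $(\bar z,\bar\lambda)\in\gph K$, so the inner semicontinuity assumption is being imposed at a genuine point of the graph.

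Next I would argue by contradiction: suppose $\bar z$ is not a local minimizer of \eqref{eq:implicit_multipliers}. Then there exists a sequence $\{z_k\}_{k\in\N}\subset Z$ with $z_k\to\bar z$ and $f(z_k)<f(\bar z)$ for all $k\in\N$. Because $z_k\in Z\subset\dom K$, the inner semicontinuity of $K$ at $(\bar z,\bar\lambda)$ w.r.t.\ $\dom K$ yields a sequence $\{\lambda_k\}_{k\in\N}\subset\R^m$ with $\lambda_k\to\bar\lambda$ and $\lambda_k\in K(z_k)$ for all sufficiently large $k$.

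Finally, for those $k$ we have $z_k\in M$ together with $\lambda_k\in F(z_k)$ and $0\in G(z_k,\lambda_k)$, i.e.\ $(z_k,\lambda_k)\in\tilde Z$; moreover $(z_k,\lambda_k)\to(\bar z,\bar\lambda)$ while $f(z_k)<f(\bar z)$. This contradicts the local optimality of $(\bar z,\bar\lambda)$ for \eqref{eq:explicit_multipliers}, which proves the claim.

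There is essentially no genuine obstacle here — the corollary is a direct byproduct of the argument already given for \cref{thm:local_relationship}(b). The one point worth emphasizing (and the whole reason inner semicontinuity suffices where inner semicompactness did not) is that inner semicontinuity delivers approximating multipliers converging to the \emph{specified} $\bar\lambda$, so the pairs $(z_k,\lambda_k)$ accumulate exactly at the given local minimizer $(\bar z,\bar\lambda)$; under mere inner semicompactness one could only pass to a subsequence converging to \emph{some} $\bar\lambda\in K(\bar z)$, which need not be the $\lambda$-component of the minimizer at hand, and that is why part~(b) of \cref{thm:local_relationship} had to require local optimality of $(\bar z,\lambda)$ for all $\lambda\in K(\bar z)$.
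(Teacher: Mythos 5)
Your proof is correct and is exactly the argument the paper intends: the paper proves the corollary only by remarking that the proof of \cref{thm:local_relationship}(b) goes through when inner semicompactness is upgraded to inner semicontinuity at $(\bar z,\bar\lambda)$, which is precisely the contradiction argument you spell out. Your closing remark about why the convergence to the \emph{specified} $\bar\lambda$ is the decisive gain also matches the paper's discussion preceding the corollary.
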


Due to \cref{lem:inner_semicompactness_of_K}, the inner semicompactness of $K$ can be guaranteed via
local boundedness of $F$. In the context of bilevel programming, see \cref{sec:bilevel_programming},
we already mentioned that the mapping $F$ does not enjoy this property. 
However, $K$ is the Lagrange multiplier mapping associated with the lower level problem in this
context, and the latter is known to be locally bounded under validity of the
Mangasarian--Fromovitz constraint qualification which, thus, implies its inner semicompactness as well.
Hence, \cref{thm:local_relationship} recovers the popular result \cite[Theorem~3.2]{DempeDutta2012}.
However, exploiting \cref{prop:inner_semicompactness_of_multiplier_map}, we are in position to
weaken this constraint qualification such that the resulting multiplier mapping still possesses the
necessary inner semicompactness, i.e., we will keep the essential result of 
\cref{thm:local_relationship} in this case, 
see \cref{sec:app_bilevel_programming} for details.

\begin{remark}
 We point out that, in several exemplary cases, the inner semicompactness assumption will
 be automatically fulfilled (or will be a consequence of some initial assumption on the data,
 such as validity of a constraint qualification for the lower level program in bilevel programming).
 In fact, even a stronger form of inner semicompactness considered
 in \cref{sec:M_Stationarity_conditions} (i.e., inner semicompactness w.r.t.\ another
 intermediate mapping) will hold true, see \cref{sec:app_bilevel_programming} as well.
\end{remark}

Below, we illustrate that none of the assumptions which are necessary in order
to identify a local minimizer of \eqref{eq:implicit_multipliers} via local minimizers of
\eqref{eq:explicit_multipliers} can be dropped in \cref{thm:local_relationship}.
\begin{example}\label{ex:local_relationship}
\hfill
	\begin{enumerate}
	\item[(a)] Let us consider the setting $n=m=s=1$ with $f:=\id$,
		\[
			\forall z,\lambda\in\R\colon\quad
				F(z):=\begin{cases}
						\{0\}	&z>0,\\
						\{0,1\}	&z=0,\\
						\{1\}	&z<0,
					\end{cases}
				\qquad
				G(z,\lambda):=[-z-\lambda,\infty),
		\]
		as well as $M:=\R$. Obviously, we have $Z=[-1,\infty)$, i.e.,
		$\bar z:=-1$ is the uniquely determined local and global minimizer 
		of the associated problem \eqref{eq:implicit_multipliers}.
		It holds
		\[
			\forall z\in\R\colon\quad
				K(z)=	\begin{cases}
							\{0\} 	&z>0,\\
							\{0,1\}	&z=0,\\
							\{1\}	&z\in[-1,0),\\
							\varnothing	&z<-1.
						\end{cases}
		\]
		At $\tilde z:=0$, $K$ is inner semicompact w.r.t.\ $\dom K=Z$. It can be easily
		checked that $(\tilde z,0)$ is a local minimizer of \eqref{eq:explicit_multipliers}
		while $(\tilde z,1)$ is not. Thus, assertion (b) of \Cref{thm:local_relationship}
		does not generally hold whenever the local optimality for \eqref{eq:explicit_multipliers}
		cannot be guaranteed for all realizations of the implicit variables in question.
	\item[(b)] For $n=m=s=1$, we set $f:=\id$,
		\[
			\forall z,\lambda\in\R\colon\quad
			F(z):=
				\begin{cases}
					\{0\}		&	z\geq 0,\\
					\{-1/z\}	&	z<0,
				\end{cases}
			\qquad
			G(z,\lambda):=
				[-1,1+z],
		\]
		as well as $M:=\R$. Again, we find $Z=[-1,\infty)$, and $\bar z:=-1$ is the uniquely
		determined local and global minimizer of the associated problem \eqref{eq:implicit_multipliers}. 
		One easily computes
		\[
			\forall z\in\R\colon\quad
			K(z)=
				\begin{cases}
					\{0\}		&	z\geq 0,\\
					\{-1/z\}	&	z\in[-1,0),\\
					\varnothing	&	z<-1.
				\end{cases}
		\]
		Let us consider $\tilde z:=0$. At $\tilde z$, $K$ is not inner semicompact w.r.t.\ $\dom K$. 
		On the other hand, $(\tilde z,0)$ is a local minimizer
		of \eqref{eq:explicit_multipliers}
		since for $z\uparrow 0$, we have $\lambda\to\infty$ for any $\lambda\in K(z)$.
		Consequently, we cannot abstain from postulating inner semicompactness of $K$ in order
		to guarantee validity of assertion (b) of \Cref{thm:local_relationship}.
	\end{enumerate}	
\end{example}

The issues regarding the relationship between local minimizers of \eqref{eq:implicit_multipliers}
and \eqref{eq:explicit_multipliers} result from the fact that the variable $\lambda$ is equipped with a
metric structure in \eqref{eq:explicit_multipliers}. This is quite natural when treating
\eqref{eq:explicit_multipliers} as an optimization problem and forms the base for the derivation of
necessary optimality conditions, constraint qualifications, and solution algorithms. 
In the context of bilevel optimization, the authors of \cite{YeZhu2010} used a different approach by
disregarding the metric structure w.r.t.\ the lower level Lagrange multipliers in a reformulation
of the bilevel programming problem which exploits the optimal value function as well as the
Karush--Kuhn--Tucker-conditions of the lower level problem simultaneously, see
\cite[Proposition~3.1]{YeZhu2010}. In the more general setting considered here, this amounts to
saying that a feasible point $(\bar z,\bar\lambda)\in\R^n\times\R^m$ 
of \eqref{eq:explicit_multipliers} is a local minimizer 
whenever there is a neighborhood $U\subset\R^n$ of $\bar z$ such that
\begin{equation}\label{eq:local_minimizer_Q_metric_free}
	\forall (z,\lambda)\in \tilde Z\cap(U\times\R^m)\colon\quad
	f(\bar z)\leq f(z)
\end{equation}
holds. One can easily check that whenever 
$\bar z\in\R^n$ is a local minimizer of \eqref{eq:implicit_multipliers}, then, for each
$\lambda\in K(\bar z)$, $(\bar z,\lambda)$ is a local minimizer of \eqref{eq:explicit_multipliers}
in the sense of \eqref{eq:local_minimizer_Q_metric_free}. 
Conversely, if $(\bar z,\bar\lambda)\in\R^n\times\R^m$ is a local minimizer of
\eqref{eq:explicit_multipliers} in the sense of \eqref{eq:local_minimizer_Q_metric_free}, then
$\bar z$ is a local minimizer of \eqref{eq:implicit_multipliers}.
At the first glance, this approach seems to solve all the issues observed in 
\cref{thm:local_relationship}. However, it is completely unclear how local minimizers of
\eqref{eq:explicit_multipliers} in the sense of \eqref{eq:local_minimizer_Q_metric_free} can be
characterized or numerically identified. In \cite{YeZhu2010}, the authors do not comment on this
shortcoming at all. Thus, although faced with the disadvantages carved out in
\cref{thm:local_relationship}, we will stick to the classical notion of local optimality in
\eqref{eq:explicit_multipliers}, i.e., we impose a metric structure on $\lambda$ there.

We conclude this section with a brief remark regarding sufficient optimality conditions.
\begin{remark}\label{rem:implicit_multipliers_and_sufficient_optimality_conditions}
	In optimization theory, sufficient optimality conditions generally imply local
	linear or quadratic growth, i.e., they impose local isolatedness of the
	characterized minimizer.
	Now, consider a local minimizer $\bar z\in Z$ of \eqref{eq:implicit_multipliers}.
	Even if it is a locally isolated minimizer of \eqref{eq:implicit_multipliers},
	this property fails to hold for the respective local minimizer
	$(\bar z,\bar\lambda)$ for each $\bar\lambda\in K(\bar z)$ as soon as
	$K(\bar z)$ is a connected set which is not a singleton since the objective of
	\eqref{eq:explicit_multipliers} does not depend on the variable $\lambda$.
	As a consequence, one cannot rely on classical first- or second-order
	sufficient optimality conditions in order to characterize the minimizers
	of \eqref{eq:explicit_multipliers} as soon as $K$ is multivalued.
	In the light of \cref{thm:local_relationship}, it is, thus, difficult to 
	infer local optimality of a feasible point of \eqref{eq:implicit_multipliers}
	via \eqref{eq:explicit_multipliers}.
	In the context of bilevel and cardinality-constrained programming, this phenomenon has been mentioned 
	recently in \cite[Remark~4.21]{MehlitzZemkoho2019} 
	and \cite[Section~3.2]{BucherSchwartz2018}, respectively.
\end{remark}

\subsection{Necessary optimality conditions}\label{sec:M_Stationarity_conditions}

Here, we are going to investigate three notions of necessary optimality conditions 
of M-stationarity-type for program \eqref{eq:implicit_multipliers}.
Two of them correspond to (standard) M-stationarity defined in \cref{Def:Mstat}
applied either directly to \eqref{eq:implicit_multipliers} or to its reformulation
\eqref{eq:explicit_multipliers}, while the third notion offers conditions in a completely explicit form.
Without mentioning it again, we assume throughout the section that $\gph H$ is (locally) closed.
As we have seen in \cref{lem:H_closed_graph} this is inherent if $F$ is locally bounded, but
it also holds in far more general but practically relevant situations.
In particular, $\gph H$ is locally closed around $(\bar z,0) \in \gph H$
if the mapping $\widehat{K}$ from \eqref{eq:simple_intermediate_map} below is 
inner semicompact at $(\bar z,0)$ w.r.t.\ $\dom \widehat{K} = \gph H$.
For later use, let us define set-valued mappings $\mathcal H\colon\R^n\times\R^m\tto\R^{m}\times\R^s$ and
$\mathfrak{H}\colon\R^n\times\R^m\rightrightarrows\R^{n+m}\times\R^{n+m+s}$
by means of
\begin{equation*}
	\begin{aligned}
		\forall z\in\R^n,\,\forall\lambda\in\R^m\colon\quad
		\mathcal{H}(z,\lambda) &:=
			\begin{pmatrix}
				F(z)-\lambda\\
				G(z,\lambda)
			\end{pmatrix},
		\;\;
		\mathfrak{H}(z,\lambda):=
			\begin{pmatrix} 
				(z,\lambda)-\gph F\\
				(z,\lambda,0)-\gph G
			\end{pmatrix}.
	\end{aligned}
\end{equation*}
In order to incorporate the constraints $z \in M$,
we will exploit the mappings
$H_M\colon\R^n\tto\R^s\times\R^n$, 
$\mathcal H_M\colon\R^n\times\R^m\tto\R^{m}\times\R^s\times\R^n$, as well as
$\mathfrak{H}_M\colon\R^n\times\R^m\rightrightarrows\R^{n+m}\times\R^{n+m+s}\times\R^n$ obtained
from $H$, $\mathcal H$, and $\mathfrak H$
by adding $(z - M)$ as a component, 
i.e., we exemplary have $H_M(z) := H(z) \times (z - M)$ for all $z\in\R^n$.
Observe that we have $z\in Z$ if and only if $(0,0)\in H_M(z)$ and $(z,\lambda)\in\tilde Z$
if and only if $(0,0,0)\in\mathcal H_M(z,\lambda)$ if and only if $((0,0),(0,0,0),0)\in\mathfrak H_M(z,\lambda)$.
\begin{definition}\label{def:Mst_explicit_variable}
	Let $\bar z\in\R^n$ be a feasible point of \eqref{eq:implicit_multipliers}.
	Then $\bar z$ is said to be
	\begin{enumerate}
	\item[(i)]
	\emph{implicitly M-stationary}
	if there exists $\nu\in\R^s$ such that
	\[
		0\in\partial f(\bar z)+D^*H(\bar z,0)(\nu)+\mathcal N_M(\bar z),
	\]
	\item[(ii)]
	\emph{fuzzily M-stationary} (or \emph{fuzzily M-stationary w.r.t.\ $\bar\lambda$})
	if there exist $\bar\lambda\in K(\bar z)$, $\mu\in\R^m$, and $\nu\in\R^s$
	such that
	\[
		(0,0) \in \partial f(\bar z)\times\{0\} +D^* \mathcal{H}((\bar z,\bar\lambda),(0,0))(\mu,\nu)
                        +\mathcal N_M(\bar z)\times\{0\}
	\]
	holds, i.e., if $(\bar z,\bar\lambda)$ is M-stationary for \eqref{eq:explicit_multipliers},
	\item[(iii)]
	\emph{explicitly M-stationary} (or \emph{explicitly M-stationary w.r.t.\ $\bar\lambda$})
	if there exist $\bar\lambda\in K(\bar z)$, $\mu\in\R^m$, and $\nu\in\R^s$
	such that
	\[
		0\in\partial f(\bar z)+D^*F(\bar z,\bar\lambda)(\mu)
			+\{\xi\in\R^n\,|\,(\xi,\mu)\in D^*G((\bar z,\bar\lambda),0)(\nu)\}
			+\mathcal N_M(\bar z).
	\]
	\end{enumerate}
\end{definition}

\cref{Pro:MSRMain} now immediately provides
constraint qualifications for the above M-stationarity notions.

\begin{proposition}\label{prop:Stationarities_via_subreguarity}
 	Let $\bar z\in \R^n$ be a local minimizer of \eqref{eq:implicit_multipliers}.
 	Then the following assertions hold.
 	\begin{enumerate}
  		\item[(a)] If $H_M$ is metrically subregular at $(\bar z,(0,0))$, 
  			then $\bar z$ is implicitly M-stationary.
  		\item[(b)] If $\mathcal{H}_M$ is metrically subregular at 
  			$((\bar z,\bar\lambda),(0,0,0))$ for some $\bar\lambda\in K(\bar z)$, 
  			then $\bar z$ is fuzzily M-stationary w.r.t.\ $\bar\lambda$.
  		\item[(c)] If $\mathfrak{H}_M$ is metrically subregular at 
  			$((\bar z,\bar\lambda),((0,0),(0,0,0),0))$ 
  			for some $\bar\lambda\in K(\bar z)$, then $\bar z$ is explicitly M-stationary
  			w.r.t.\ $\bar\lambda$.
 	\end{enumerate}
\end{proposition}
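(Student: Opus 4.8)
The plan is to treat all three assertions uniformly. In each case, the generalized Fermat rule (see \cite{Mordukhovich2018,RockafellarWets1998}) applied to the pertinent program shows that the zero vector lies in the sum of the limiting subdifferential of the objective and the limiting normal cone to the feasible set; that feasible set is exactly the pre-image of $\{0\}$ under the constraint mapping in question; the postulated metric subregularity lets us bound that normal cone by the corresponding coderivative via \cref{Pro:MSRMain}; and the product structure of the constraint mapping, handled through the calculus of \cref{sec:calculus_of_set_valued_maps}, turns the bound into the claimed stationarity condition.

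Concretely, for (a): since $f$ is locally Lipschitz continuous, $Z=H_M^{-1}(0,0)$ is closed, and $\bar z$ is a local minimizer of $f$ over $Z$, the generalized Fermat rule gives $0\in\partial f(\bar z)+\mathcal N_Z(\bar z)$. Metric subregularity of $H_M$ at $(\bar z,(0,0))$ and \cref{Pro:MSRMain} yield $\mathcal N_Z(\bar z)\subset\bigcup_{\nu\in\R^s,\,\zeta\in\R^n}D^*H_M(\bar z,(0,0))(\nu,\zeta)$. Writing $H_M(z)=H(z)\times(z-M)$ as the product of $H$ and $z\mapsto z-M$, whose single-valued part is the (continuously differentiable) identity, \cref{lem:coderivative_product}~(a) and \cref{lem:equality_coderivative_products}~(a) apply and provide $\bigcup_{\zeta}D^*H_M(\bar z,(0,0))(\nu,\zeta)=D^*H(\bar z,0)(\nu)+\mathcal N_M(\bar z)$. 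Combining the two displayed inclusions yields some $\nu\in\R^s$ with $0\in\partial f(\bar z)+D^*H(\bar z,0)(\nu)+\mathcal N_M(\bar z)$, i.e., implicit M-stationarity.

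For (b) and (c) note first that $\bar z\in Z=M\cap\dom K$, so $K(\bar z)\neq\varnothing$, and for the fixed $\bar\lambda\in K(\bar z)$ from the hypothesis \cref{thm:local_relationship}~(a) makes $(\bar z,\bar\lambda)$ a local minimizer of \eqref{eq:explicit_multipliers}. Since the objective of \eqref{eq:explicit_multipliers}, read as a function of $(z,\lambda)$, has limiting subdifferential $\partial f(\bar z)\times\{0\}$ at $(\bar z,\bar\lambda)$, the generalized Fermat rule gives $(0,0)\in\partial f(\bar z)\times\{0\}+\mathcal N_{\tilde Z}(\bar z,\bar\lambda)$. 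For (b), $\tilde Z=\mathcal H_M^{-1}(0,0,0)$, and $\mathcal H_M(z,\lambda)=\mathcal H(z,\lambda)\times(z-M)$ is the product of $\mathcal H$ and the mapping $(z,\lambda)\mapsto z-M$, again with continuously differentiable single-valued part; hence metric subregularity of $\mathcal H_M$, \cref{Pro:MSRMain}, and \cref{lem:coderivative_product}~(a) give $\mathcal N_{\tilde Z}(\bar z,\bar\lambda)\subset\bigcup_{\mu\in\R^m,\,\nu\in\R^s}\bigl(D^*\mathcal H((\bar z,\bar\lambda),(0,0))(\mu,\nu)+\mathcal N_M(\bar z)\times\{0\}\bigr)$, and together with Fermat's inclusion this is exactly fuzzy M-stationarity w.r.t.\ $\bar\lambda$.

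For (c), the decisive observation is that $\mathfrak H_M$ is a genuine feasibility mapping: $\mathfrak H_M(z,\lambda)=\gamma(z,\lambda)-\Omega$ for the linear (hence continuously differentiable) map $\gamma(z,\lambda):=\bigl((z,\lambda),(z,\lambda,0),z\bigr)$ and the closed set $\Omega:=\gph F\times\gph G\times M$, so that $\tilde Z=\gamma^{-1}(\Omega)=\mathfrak H_M^{-1}(0)$. Thus metric subregularity of $\mathfrak H_M$ and \cref{Pro:MSRMain} collapse to the standard pre-image rule $\mathcal N_{\tilde Z}(\bar z,\bar\lambda)\subset\nabla\gamma(\bar z,\bar\lambda)^\top\mathcal N_\Omega(\gamma(\bar z,\bar\lambda))$, where $\mathcal N_\Omega(\gamma(\bar z,\bar\lambda))=\mathcal N_{\gph F}(\bar z,\bar\lambda)\times\mathcal N_{\gph G}((\bar z,\bar\lambda),0)\times\mathcal N_M(\bar z)$ and $\nabla\gamma(\bar z,\bar\lambda)^\top$ sends $\bigl((a_1,a_2),(b_1,b_2,b_3),c\bigr)$ to $(a_1+b_1+c,\,a_2+b_2)$. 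Feeding this into Fermat's inclusion produces $\phi\in\partial f(\bar z)$, $(a_1,a_2)\in\mathcal N_{\gph F}(\bar z,\bar\lambda)$, $(b_1,b_2,b_3)\in\mathcal N_{\gph G}((\bar z,\bar\lambda),0)$ and $c\in\mathcal N_M(\bar z)$ with $a_2+b_2=0$ and $\phi+a_1+b_1+c=0$; setting $\mu:=-a_2=b_2$ and $\nu:=-b_3$ and reading off the defining relations of the coderivatives of $F$ and $G$ turns these relations into explicit M-stationarity w.r.t.\ $\bar\lambda$. The main obstacle is exactly this last piece of bookkeeping in (c): tracking which block of the $\Omega$-multiplier feeds $D^*F$, which feeds $D^*G$, and which feeds $\mathcal N_M$, and recognizing that the cancellation $a_2+b_2=0$ in the $\lambda$-component is precisely what forces a common multiplier $\mu$ to appear simultaneously as the argument of $D^*F(\bar z,\bar\lambda)$ and inside the membership $(\xi,\mu)\in D^*G((\bar z,\bar\lambda),0)(\nu)$ in the stationarity condition.
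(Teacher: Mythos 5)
Your proposal is correct and follows essentially the same route as the paper: the generalized Fermat rule combined with \cref{thm:local_relationship}~(a) for the reformulated problem, the normal-cone estimate of \cref{Pro:MSRMain} under the respective metric subregularity hypothesis, the product formula of \cref{lem:equality_coderivative_products}~(a) to peel off $\mathcal N_M(\bar z)$ in (a) and (b), and for (c) the identification of $\mathfrak H_M$ as the feasibility mapping $\mathfrak h_M-\Omega$ with $\Omega=\gph F\times\gph G\times M$, followed by the pre-image and product rules and the sign bookkeeping that forces the common multiplier $\mu$. The only differences are cosmetic (e.g., additionally citing \cref{lem:coderivative_product}), so nothing further is needed.
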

\begin{proof}
	Since $\bar z$ is a local minimizer of \eqref{eq:implicit_multipliers}, \cite[Theorem~6.1]{Mordukhovich2018}
	guarantees validity of
	\[
		0\in\partial f(\bar z)+\mathcal N_{H_M^{-1}(0,0)}(\bar z).
	\]
	Noting that for each $\lambda\in K(\bar z)$, $(\bar z,\lambda)$ is a local minimizer of
	\eqref{eq:explicit_multipliers}, see \cref{thm:local_relationship}, 
	\cite[Theorem~6.1]{Mordukhovich2018} furthermore implies
	\begin{subequations}\label{eq:abstract_M_stationarity}
		\begin{align}
			\label{eq:abstract_M_stationarity_fuzzy}
				(0,0)&\in\partial f(\bar z)\times\{0\}
					+\mathcal N_{\mathcal H_M^{-1}(0,0,0)}(\bar z,\bar\lambda),\\
			\label{eq:abstract_M_stationarity_explicit}
				(0,0)&\in\partial f(\bar z)\times\{0\}
					+\mathcal N_{\mathfrak H_M^{-1}((0,0),(0,0,0),0)}(\bar z,\bar\lambda).
		\end{align}
	\end{subequations}
	For the proof of (a), we exploit the metric subregularity of $H_M$ at $(\bar z,(0,0))$ 
	and \cref{Pro:MSRMain} in order to
	find $0\in\partial f(\bar z)+D^*H_M(\bar z,(0,0))(\nu,\xi)$ for some $\nu\in\R^s$ and $\xi\in\R^n$.
	Now, we can exploit assertion (a) of \cref{lem:equality_coderivative_products} 
	in order to find
	\[
		D^*H_M(\bar z,(0,0))(\nu,\xi)
		=
		\begin{cases}
			D^*H(\bar z,0)(\nu) + \xi 		&	\xi\in\mathcal N_M(\bar z),\\
			\varnothing				&	\text{otherwise,}
		\end{cases}
	\]
	and this shows that $\bar z$ is implicitly M-stationary.\\
	The proof for (b) works in analogous way exploiting \eqref{eq:abstract_M_stationarity_fuzzy}.\\
	In order to verify statement (c), we first introduce a continuously 
	differentiable single-valued mapping
	$\mathfrak h_M\colon\R^n\times\R^m\to\R^{n+m}\times\R^{n+m+s}\times\R^n$
	and a set $\Omega\subset\R^{n+m}\times\R^{n+m+s}\times\R^n$ by means of
	\[
		\forall z\in\R^n,\,\forall\lambda\in\R^m\colon\quad
		\mathfrak h_M(z,\lambda)
		:=
		((z,\lambda),(z,\lambda,0),z),
		\qquad
		\Omega
		:=
		\gph F\times\gph G\times M
	\]
	and observe that $\mathfrak H_M(z,\lambda)=\mathfrak h_M(z,\lambda)-\Omega$
	holds for all $z\in\R^n$ and $\lambda\in\R^m$. 
	Thus, the metric subregularity of the feasibility mapping $\mathfrak H_M$ at
	the reference point guarantees applicability of the pre-image rule,
	see \cref{sec:abstract_M_St}.
	Together with the product rule for the limiting normal cone, see
	\cite[Proposition~6.41]{RockafellarWets1998}, we obtain
	\begin{align*}
		&\mathcal N_{\mathfrak H_M^{-1}((0,0),(0,0,0),0)}(\bar z,\bar\lambda)\\
		&\qquad
		\subset
		\nabla\mathfrak h_M(\bar z,\bar\lambda)^\top
			\mathcal N_\Omega(\mathfrak h_M(\bar z,\bar\lambda))\\
		&\qquad
		=
		\nabla\mathfrak h_M(\bar z,\bar\lambda)^\top
			\left(
				\mathcal N_{\gph F}(\bar z,\bar\lambda)
				\times
				\mathcal N_{\gph G}(\bar z,\bar\lambda,0)
				\times
				\mathcal N_M(\bar z)
			\right)\\
		&\qquad
		=\left\{
			(\xi_1+\xi_2+\xi_3,\mu_1+\mu_2)
			\middle|\,
				\begin{aligned}
					&\exists \nu\in\R^s\colon\,\xi_1\in D^*F(\bar z,\bar\lambda)(-\mu_1),\\
					&\qquad (\xi_2,\mu_2)\in D^*G((\bar z,\bar\lambda),0)(\nu),\,
					\xi_3\in\mathcal N_M(\bar z)
				\end{aligned}
		 \right\}.
	\end{align*}
	Now, the claim follows from \eqref{eq:abstract_M_stationarity_explicit}.
\end{proof}

\begin{remark}\label{rem:handling_of_abstract_constraints}
	Taking into account \cref{lem:metric_subregularity_of_productsII}
	as well as the fact that the simple mapping
	$z\tto z-M$ is trivially metrically subregular 
	at all points of its graph,
 	metric subregularity of $H_M$ at $(\bar z,(0,0))$ can be ensured by
 	metric subregularity of $H$ at $(\bar z,0)$ and metric subregularity
 	of $z \tto (z - H^{-1}(0)) \times (z - M)$ 
 	at $(\bar z,(0,0))$.	
 	Note that the latter implies applicability of the intersection rule
 	for the set $Z=H_M^{-1}(0,0) = H^{-1}(0) \cap M$, namely
 	\begin{equation*}
  		\mathcal N_{H_M^{-1}(0,0)}(\bar z) 
  		\subset 
  		\mathcal N_{H^{-1}(0)}(\bar z) + \mathcal N_{M}(\bar z)
 	\end{equation*}
 	and, hence, combined with the metric subregularity of $H$, also provides a constraint
 	qualification for implicit M-stationarity, see \cref{Pro:MSRMain}.
\end{remark}

Let us briefly comment on how to ensure the above metric subregularity assumptions
via sufficient conditions in terms of coderivatives.
Let $\bar z\in \R^n$ be a feasible point for \eqref{eq:implicit_multipliers}
and let $\bar \lambda \in K(\bar z)$ be chosen arbitrarily.
\begin{enumerate}
 	\item[(i)] If, for all $\nu\in\R^s$ and $\xi\in\R^n$, the implication
 		\begin{equation*}
  			- \xi \in D^*H(\bar z,0)(\nu) \cap \big(- \mathcal N_M(\bar z)\big)
  			\ \Longrightarrow \ \xi,\nu = 0
 		\end{equation*}
 		holds, then $H_M$ is metrically subregular at $(\bar z,(0,0))$.
 	\item[(ii)] If, for all $\mu\in\R^m$, $\nu\in\R^s$, and $\xi\in\R^n$, the implication
 		\begin{equation*}
 		 	(-\xi,0) \in D^*\mathcal{H}((\bar z,\bar \lambda),(0,0))(\mu,\nu),
  			\ \xi \in \mathcal N_M(\bar z)
  			\ \Longrightarrow \ \xi,\mu,\nu = 0
 		\end{equation*}
 		holds, then $\mathcal{H}_M$ is metrically subregular at 
 		$((\bar z,\bar \lambda),(0,0,0))$.
 	\item[(iii)] If, for all $\zeta \in\R^n$, $\mu\in\R^m$, $\nu\in\R^s$, and $\xi\in\R^n$,
 		the implication
 		\begin{equation*}
			\zeta \in D^*F(\bar z,\bar\lambda)(\mu), \
 			(-\zeta - \xi,\mu) \in D^*G((\bar z,\bar \lambda),0)(\nu),
 			\ \xi \in \mathcal N_M(\bar z) \ \Longrightarrow \
 			\xi,\zeta,\mu,\nu = 0
 		\end{equation*}
 		holds, then $\mathfrak{H}_M$ is metrically subregular at 
 		$((\bar z,\bar\lambda),((0,0),(0,0,0),0))$.
\end{enumerate}
Observe that the above conditions correspond to the famous Mordukhovich criterion
applied to the respective situation at hand. This can be seen from
statement (a) of \cref{lem:equality_coderivative_products}, the sum rule for
coderivative calculus, see e.g.\ \cite[Theorem~3.9]{Mordukhovich2018}, and
the product rule for limiting normals, see \cite[Proposition~6.41]{RockafellarWets1998}.
Thus, the above conditions already imply metric regularity of the mappings $H_M$,
$\mathcal H_M$, and $\mathfrak H_M$ at the respective point of interest.
Weaker sufficient conditions for the presence of metric subregularity 
in terms of limiting normal cones and coderivatives can exemplary be found in 
\cite{HenrionJouraniOutrata2002,HenrionOutrata2005,IofOut08,ZheNg10}.
Even finer sufficient conditions can be obtained
using the directional limiting approach, namely the
\emph{first-order sufficient condition for metric subregularity}
from \cite{GfrererKlatte2016} or the directional pseudo- and quasi-normality 
conditions from \cite{BaiYeZhang2019,BenkoCervinkaHoheisel2019}.
For more details on the directional limiting approach in variational analysis, we 
exemplary refer to \cite{BenkoGfrererOutrata2019,GfrererOutrata2016b}
and the references therein.

Next, we are going to compare the three approaches on how to come up with necessary
optimality conditions for \eqref{eq:implicit_multipliers}. 
First, we look at the qualification conditions
and, afterwards, we deal with the stationarity conditions.
For that purpose, let us define another intermediate
mapping $\widehat K\colon\R^n\times\R^s \tto \R^m$,
closely related to $K$, given by
        	\begin{equation}\label{eq:simple_intermediate_map}
        		\forall z\in\R^n,\,\forall w\in\R^s\colon\quad
				\widehat K(z,w):= \{\lambda \in F(z) \,|\, w \in G(z,\lambda)\}.
        	\end{equation}
\begin{proposition}\label{prop:MSR_of_H_via_cal_H}
	Let $\bar z \in\R^n$ be a feasible point of \eqref{eq:implicit_multipliers}.
 	Consider the following two assumptions:
	 \begin{enumerate}
  		\item [(a)] $\mathcal{H}_M$ is metrically subregular at $((\bar z,\lambda),(0,0,0))$
 			for each $\lambda\in K(\bar z)$ and $\widehat K$
 			is inner semicompact at $(\bar z,0)$ w.r.t.\ $\dom \widehat K$,
 		\item [(b)] $\mathcal{H}_M$ is metrically subregular at $((\bar z,\bar\lambda),(0,0,0))$
 			for some $\bar\lambda\in K(\bar z)$ and $\widehat K$
 			is inner semicontinuous at $((\bar z,0),\bar\lambda)$ w.r.t.\ $\dom \widehat K$.
 	\end{enumerate}
 	Then each of the conditions (a) and (b) implies that 
 	$H_M$ is metrically subregular at $(\bar z,(0,0))$.
\end{proposition}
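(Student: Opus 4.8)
The plan is to argue by contradiction, using the mapping $\widehat K$ from \eqref{eq:simple_intermediate_map} — which depends on \emph{both} $z$ and $w$ — as the bridge between the feasible sets $Z=H_M^{-1}(0,0)$ of \eqref{eq:implicit_multipliers} and $\tilde Z=\mathcal H_M^{-1}(0,0,0)$ of \eqref{eq:explicit_multipliers}. Suppose $H_M$ fails to be metrically subregular at $(\bar z,(0,0))$. Then there is a sequence $z_k\to\bar z$ with $\dist(z_k,Z)>k\,\dist((0,0),H_M(z_k))$ for every $k$. Since $\bar z\in Z$, the left-hand side is at most $\norm{z_k-\bar z}{}$ and hence tends to $0$; moreover it is strictly positive, so $\dist(z_k,Z)>0$, and finiteness of the right-hand side forces $z_k\in\dom H$. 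Because the product spaces carry the sum norm, $\dist((0,0),H_M(z_k))=\dist(0,H(z_k))+\dist(z_k,M)$, so both summands are dominated by $\dist(z_k,Z)/k$ and, in particular, tend to $0$.

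Next I would lift $z_k$ to a point of $\gph H=\dom\widehat K$: choose $w_k\in H(z_k)$ with $\norm{w_k}{}\le\dist(0,H(z_k))+\dist(z_k,Z)/k^2$; then $\norm{w_k}{}<(2/k)\dist(z_k,Z)\to 0$, and $w_k\in H(z_k)$ means $(z_k,w_k)\in\dom\widehat K$. Thus $(z_k,w_k)\to(\bar z,0)$ inside $\dom\widehat K$. Under assumption (a), inner semicompactness of $\widehat K$ at $(\bar z,0)$ yields a subsequence and elements $\mu_l\in\widehat K(z_{k_l},w_{k_l})$ converging to some $\bar\lambda$, and closedness of $\gph F$ and $\gph G$ gives $\bar\lambda\in\widehat K(\bar z,0)=K(\bar z)$; under assumption (b), inner semicontinuity of $\widehat K$ at $((\bar z,0),\bar\lambda)$ directly produces $\mu_k\to\bar\lambda$ with $\mu_k\in\widehat K(z_k,w_k)$ for all large $k$, where $\bar\lambda$ is the multiplier from the hypothesis. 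In either case, along a suitable (sub)sequence we have $(z_k,\mu_k)\to(\bar z,\bar\lambda)$ with $\mu_k\in F(z_k)$, $w_k\in G(z_k,\mu_k)$, and $\mathcal H_M$ is metrically subregular at $((\bar z,\bar\lambda),(0,0,0))$.

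The contradiction then comes from evaluating the subregularity estimate for $\mathcal H_M$ at $(z_k,\mu_k)$ for large $k$. Using $\dist(\mu_k,F(z_k))=0$, $\dist(0,G(z_k,\mu_k))\le\norm{w_k}{}$, and $\dist(z_k,M)\le\dist((0,0),H_M(z_k))<\dist(z_k,Z)/k$, one obtains $\dist((z_k,\mu_k),\tilde Z)\le\kappa\bigl(\norm{w_k}{}+\dist(z_k,M)\bigr)<(3\kappa/k)\dist(z_k,Z)$. On the other hand, projecting onto the $z$-component is nonexpansive for the sum norm, and $\{z\mv\exists\lambda\colon(z,\lambda)\in\tilde Z\}=Z$ (this is one of the equivalences established in the proof of \cref{thm:global_relationship}), whence $\dist(z_k,Z)\le\dist((z_k,\mu_k),\tilde Z)$. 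Combining the two bounds gives $\dist(z_k,Z)<(3\kappa/k)\dist(z_k,Z)$ with $\dist(z_k,Z)>0$, which is impossible once $k>3\kappa$.

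The hard part will be the norm bookkeeping rather than any deep argument: one must select $w_k$ so that both $\norm{w_k}{}$ and $\dist(z_k,M)$ are dominated by a null sequence times $\dist(z_k,Z)$ — this dictates the particular choice of $w_k$ above — and one must resist the temptation to work with $K$ instead of $\widehat K$: for a $z_k$ that is infeasible for \eqref{eq:implicit_multipliers} one typically has $K(z_k)=\varnothing$, so $K$ carries no information, whereas $\widehat K(z_k,w_k)\neq\varnothing$ precisely because $w_k\in H(z_k)$, and it is the inner semicompactness (respectively inner semicontinuity) of this $(z,w)$-dependent mapping that transfers metric subregularity from $\mathcal H_M$ to $H_M$. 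Everything else — the two standard decompositions of the distances to $H_M(z_k)$ and $\mathcal H_M(z_k,\mu_k)$ and the passage to a limiting multiplier — is routine.
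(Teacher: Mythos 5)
Your proof is correct and follows essentially the same route as the paper's: argue by contradiction, lift the bad sequence $z_k$ to points $(z_k,w_k)\in\gph H=\dom\widehat K$, invoke inner semicompactness (resp.\ inner semicontinuity) of $\widehat K$ to produce multipliers $\mu_k\to\bar\lambda\in K(\bar z)$, and compare $\dist((z_k,\mu_k),\tilde Z)\geq\dist(z_k,Z)$ with the decomposition of $\dist((0,0,0),\mathcal H_M(z_k,\mu_k))$. The only (harmless) differences are that you make the selection of $w_k$ explicit where the paper glosses over it, and you phrase the endgame as a numerical contradiction with the subregularity estimate at $(z_k,\mu_k)$ rather than as a direct denial of subregularity of $\mathcal H_M$.
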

\begin{proof}
	Let us first show the statement under validity of (a).
	Suppose that $H_M$ is not metrically subregular at $(\bar z,(0,0))$.
	Hence, for each $k\in\N$, we find $z_k\in\R^n$ 
	and some $w_k\in H(z_k)$ satisfying
	\[
		\dist(z_k,Z)
		=
		\dist(z_k,H^{-1}_M(0,0))
		>
		k\,\dist((0,0),H_M(z_k)) 
		=
		k(\norm{w_k}{}+\dist(z_k,M))
	\]
	such that $z_k\to\bar z$. 
	Particularly, $w_k\to 0$ follows.
	By definition of $H$ and $\widehat{K}$, we have $(z_k,w_k) \in \dom \widehat K$ for each $k\in\N$.
	The assumed inner semicompactness of $\widehat K$ at $(\bar z,0)$ w.r.t.\ $\dom\widehat K$
	yields the existence of 
	a sequence $\{\lambda_k\}_{k\in\N}$ and some $\lambda\in \R^m$ such that $\lambda_k \to\lambda$ 
	and $\lambda_k\in\widehat K(z_k,w_k)$ hold 	along a subsequence (without relabeling).
	Recalling that $F$ and $G$ are mappings with closed graphs, 
	the above convergences yield 
	$\lambda\in F(\bar z)$ and $0\in G(\bar z,\lambda)$,
	i.e., $\lambda\in\widehat K(\bar z,0)=K(\bar z)$.
	Consequently, we obtain
	\begin{align*}
                &\dist((z_k,\lambda_k),\mathcal{H}^{-1}_M(0,0,0))
                =
                \dist((z_k,\lambda_k),\tilde Z)
                \geq
                \dist(z_k,Z)\\
                &\qquad
                > 
                k\bigl(\norm{w_k}{}+\dist(z_k,M)\bigr)
                \geq 
                k \bigl(\dist(0,G(z_k,\lambda_k))+\dist(z_k,M)\bigr)\\
                &\qquad
                =
                k\bigl(\dist(0,F(z_k)-\lambda_k)+\dist(0,G(z_k,\lambda_k))+\dist(z_k,M)\bigr)\\
                &\qquad
                =
                k\dist((0,0,0),\mathcal{H}_M(z_k,\lambda_k)),
	\end{align*}
	showing that $\mathcal{H}_M$ is not metrically subregular at
	$((\bar z,\lambda),(0,0,0))$.
	This, however, contradicts the proposition's assumptions from (a).\\
	The proof works similarly under validity of (b) observing that the sequence $\{\lambda_k\}_{k\in\N}$
	can be chosen to be convergent to the fixed implicit variable 
	$\bar\lambda\in \widehat{K}(\bar z,0)$ by
	inner semicontinuity of $\widehat{K}$ w.r.t.\ $\dom\widehat K$ at $((\bar z,0),\bar\lambda)$.
\end{proof}

\begin{proposition}\label{prop:MSR_of_cal_H_via_frak_H}
	Let $\bar z\in\R^n$ be a feasible point of \eqref{eq:implicit_multipliers} and fix $\bar\lambda\in K(\bar z)$.
	Assume that $\mathfrak{H}_M$ is
	metrically subregular at $((\bar z,\bar\lambda),((0,0),(0,0,0),0))$.
	Then $\mathcal{H}_M$ is metrically
	subregular at $((\bar z,\bar\lambda),(0,0,0))$.
\end{proposition}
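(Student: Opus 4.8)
The plan is to reduce the assertion to a purely pointwise comparison of the two feasibility residuals, after which metric subregularity transfers verbatim with the same modulus, so no extra qualification condition (as in \cref{prop:MSR_of_H_via_cal_H}) is needed. First I would record that
$\mathcal H_M^{-1}(0,0,0)=\mathfrak H_M^{-1}((0,0),(0,0,0),0)=\tilde Z$: this is immediate from the definitions of $\mathcal H_M$ and $\mathfrak H_M$ together with the equivalences stated right before \cref{def:Mst_explicit_variable}, and $(\bar z,\bar\lambda)$ belongs to this common pre-image since $\bar\lambda\in K(\bar z)$ and $\bar z\in M$. Hence both metric subregularity statements concern the distance of $(z,\lambda)$ to one and the same set $\tilde Z$.

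Next I would evaluate both residuals. Since the norm on a product space is the sum of the Euclidean norms of the factors, the distance to a Cartesian product of sets equals the sum of the distances to the factors, which gives, for every $(z,\lambda)$,
\[
	\dist((0,0,0),\mathcal H_M(z,\lambda))=\dist(\lambda,F(z))+\dist(0,G(z,\lambda))+\dist(z,M)
\]
and
\[
	\dist\bigl(((0,0),(0,0,0),0),\mathfrak H_M(z,\lambda)\bigr)=\dist((z,\lambda),\gph F)+\dist\bigl((z,\lambda,0),\gph G\bigr)+\dist(z,M).
\]
The one substantive observation --- really a one-liner rather than a genuine obstacle --- is that passing to the graph space can only shrink the residual: for any $\lambda'\in F(z)$ the point $(z,\lambda')$ lies in $\gph F$ and shares its first block with $(z,\lambda)$, so $\dist((z,\lambda),\gph F)\le\dist(\lambda,F(z))$, and likewise $\dist((z,\lambda,0),\gph G)\le\dist(0,G(z,\lambda))$ (both trivially when the right-hand sides are infinite). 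Adding the common term $\dist(z,M)$ yields $\dist(((0,0),(0,0,0),0),\mathfrak H_M(z,\lambda))\le\dist((0,0,0),\mathcal H_M(z,\lambda))$ for all $(z,\lambda)$.

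Finally I would invoke the hypothesis: there are $\kappa>0$ and a neighborhood $U$ of $(\bar z,\bar\lambda)$ with $\dist((z,\lambda),\tilde Z)\le\kappa\,\dist(((0,0),(0,0,0),0),\mathfrak H_M(z,\lambda))$ on $U$; chaining with the pointwise bound above gives $\dist((z,\lambda),\tilde Z)\le\kappa\,\dist((0,0,0),\mathcal H_M(z,\lambda))$ on $U$, which is precisely metric subregularity of $\mathcal H_M$ at $((\bar z,\bar\lambda),(0,0,0))$ with the same modulus. So there is no real obstacle here; the only points to watch are matching the product-norm conventions when splitting the distances and, formally, the degenerate case in which $F(z)$ or $G(z,\lambda)$ is empty, which merely makes the $\mathcal H_M$-residual infinite and the estimate vacuous there.
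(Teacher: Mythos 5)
Your proposal is correct and follows essentially the same route as the paper: both rest on the identity $\mathcal H_M^{-1}(0,0,0)=\mathfrak H_M^{-1}((0,0),(0,0,0),0)=\tilde Z$ and the pointwise estimates $\dist((z,\lambda),\gph F)\le\dist(\lambda,F(z))$ and $\dist((z,\lambda,0),\gph G)\le\dist(0,G(z,\lambda))$, which give $\dist(((0,0),(0,0,0),0),\mathfrak H_M(z,\lambda))\le\dist((0,0,0),\mathcal H_M(z,\lambda))$ and hence transfer metric subregularity with the same modulus.
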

\begin{proof}
    The proof follows easily from
    $\mathcal{H}^{-1}_M(0,0,0)=\mathfrak{H}^{-1}_M((0,0),(0,0,0),0)=\tilde Z$,
    together with the simple estimates
    \[
    	\begin{aligned}
     		 &\forall\, (z,\lambda')\in \gph F,\,\forall\lambda\in\R^m\colon\quad&
     		 \dist((z,\lambda),\gph F)  &\leq \norm{\lambda - \lambda'}{},&
     			\\
     		&\forall\,((z,\lambda),w)\in\gph G\colon\quad&
     		\dist(((z,\lambda),0),\gph G) &\leq \norm{w}{} & 	     			
   		\end{aligned}
   	\]
    which yield that
    $\dist(((0,0),(0,0,0),0),\mathfrak{H}_M(z,\lambda)) \leq
     \dist((0,0,0),\mathcal{H}_M(z,\lambda))$
    holds.
\end{proof}

Next, we investigate the relationship between the three stationarity
notions from \cref{def:Mst_explicit_variable} in more detail.
Let us first address the relationship between fuzzy 
and explicit M-stationarity.
\begin{remark}\label{rem:fuzzy_vs_explicit_M_stationarity}
	\Cref{lem:product_rule,lem:coderivative_product,lem:equality_coderivative_products} yield
	that, under suitable conditions,
	the coderivative of $\mathcal{H}$ at some point $((\bar z,\lambda),(0,0))$
	of its graph can be estimated or computed via its components, i.e.,
	\begin{equation}\label{eq:coder_CalH}\tag{Inc$(\lambda)$}
    	D^* \mathcal{H}((\bar z,\lambda),(0,0))(\mu,\nu) \subset
    	D^* F(\bar z,\lambda)(\mu)\times\{-\mu\} \, + \,
    	D^* G((\bar z,\lambda),0)(\nu)
	\end{equation}
	for all $\mu\in\R^m$ and $\nu\in\R^s$.
	A sufficient condition for validity of \eqref{eq:coder_CalH} is given by
	\[
		\xi\in D^*F(\bar z,\lambda)(0),\,(-\xi,0)\in D^*G((\bar z,\lambda),0)(0)
		\quad
		\Longrightarrow
		\quad
		\xi=0,
	\]
	see \cref{lem:product_rule},
	and this is inherent if either $F$ possesses the Aubin property at $(\bar z,\lambda)$
	or $G$ possesses the Aubin property at $((\bar z,\lambda),0)$.
	Another situation where \eqref{eq:coder_CalH} naturally holds is given in the case where
	$F$ and $G$ are polyhedral set-valued mappings.
	Note that \eqref{eq:coder_CalH} does not hold for free in general
	due to fact that the variable $z$
	enters both set-valued parts $F$ and $G$.
	Combining validity of \eqref{eq:coder_CalH} with simple computations 
	provides conditions that guarantee that fuzzy M-stationarity implies 
	or even coincides with explicit M-stationarity.
	We will show in
	\cref{sec:optimality_conditions_via_special_structure} 
	that the latter is automatically fulfilled
	for all of our example problems from \cref{sec:examples},
	so we skip further details and consider these two notions
	to be basically identical.
\end{remark}

Let us now tackle the more interesting question when an implicitly 
M-stationary point is fuzzily or explicitly M-stationary as well.
The latter question can be easily answered by the chain rule
since $H(z)=(G \circ \tilde F)(z)$ holds for all $z\in\R^n$ where
$\tilde F\colon\R^n \tto \R^n\times\R^m$ is the set-valued mapping given by
$\tilde F(z):= \{z\}\times F(z)$ for all $z\in\R^n$.
Taking into account that
\begin{equation*}\label{eq:tilde_F_coder}
 D^*\tilde F(z,(z,\lambda))(\xi,\mu) = \xi + D^*F(z,\lambda)(\mu)    
\end{equation*}
holds by \cref{lem:equality_coderivative_products} (a),
\cref{lem:chain_rule} immediately yields the following result.
\begin{proposition}\label{prop:from_implicit_to_explicit_M_St}
	Let $\bar z\in\R^n$ be an implicitly M-stationary point for \eqref{eq:implicit_multipliers}.
	Then $\bar z$ is explicitly M-stationary provided
	the mapping
	\begin{equation}\label{eq:New_aux_map}
	  (z,w) \tto \tilde F(z) \cap G^{-1}(w)
	  =
	  \{(z,\lambda)\,|\,\lambda\in F(z),\,w\in G(z,\lambda)\}
	\end{equation}
	is inner semicompact at $(\bar z,0)$ w.r.t.\ its domain while the mapping
  	\begin{equation}\label{eq:New_feas_map}
  	(z,w,q,\lambda) \tto 
  	\bigl(z - q,F(z) - \lambda,G(q,\lambda) - w\bigr)
  	\end{equation}
  	is metrically subregular at 
  	$((\bar z,0,\bar z,\lambda),(0,0,0))$
  	for each $\lambda\in K(\bar z)$.
\end{proposition}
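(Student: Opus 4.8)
The plan is to apply the composition chain rule \cref{lem:chain_rule} to the factorization $H = G\circ\tilde F$ noted just above the proposition, with $S_1 := \tilde F$ and $S_2 := G$, and then to translate the resulting coderivative estimate into the shape of explicit M-stationarity from \cref{def:Mst_explicit_variable}(iii). First I would check that the chain rule is applicable: $\gph G$ is closed by hypothesis, $\gph\tilde F = \{(z,z,\lambda)\,|\,(z,\lambda)\in\gph F\}$ is closed since it is the intersection of the diagonal subspace with the continuous preimage of the closed set $\gph F$, and $(\bar z,0)\in\gph H=\gph(S_2\circ S_1)$ because $\bar z$ is feasible for \eqref{eq:implicit_multipliers}. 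The crucial point is that the two auxiliary mappings of \cref{lem:chain_rule} specialize exactly to the mappings displayed in the proposition: the intermediate mapping $\Xi(z,w)=\tilde F(z)\cap G^{-1}(w)$ is literally \eqref{eq:New_aux_map}, and, using $\tilde F(z)-(q,\lambda)=\{z-q\}\times(F(z)-\lambda)$, the feasibility mapping $\Upsilon(z,w,(q,\lambda))=\bigl(\tilde F(z)-(q,\lambda),\,G(q,\lambda)-w\bigr)$ is (after flattening $y=(q,\lambda)$) exactly \eqref{eq:New_feas_map}. Since $\Xi(\bar z,0)=\{\bar z\}\times K(\bar z)$, which is nonempty by feasibility, the metric subregularity required in \cref{lem:chain_rule} at all points $((\bar z,0,\bar y),(0,0))$ with $\bar y\in\Xi(\bar z,0)$ is precisely the subregularity assumption of the proposition, and the inner semicompactness hypotheses coincide as well. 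Hence, for the multiplier $\nu\in\R^s$ supplied by implicit M-stationarity, \cref{lem:chain_rule} gives
\[
    D^*H(\bar z,0)(\nu)\subset\bigcup_{\bar\lambda\in K(\bar z)}\bigl(D^*\tilde F(\bar z,(\bar z,\bar\lambda))\circ D^*G((\bar z,\bar\lambda),0)\bigr)(\nu).
\]

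Next I would insert the coderivative formula $D^*\tilde F(\bar z,(\bar z,\bar\lambda))(\xi,\mu)=\xi+D^*F(\bar z,\bar\lambda)(\mu)$ recorded before the proposition. This shows that every $\xi\in D^*H(\bar z,0)(\nu)$ can be written as $\xi=\xi_G+\eta$ for some $\bar\lambda\in K(\bar z)$, some $\mu\in\R^m$ with $(\xi_G,\mu)\in D^*G((\bar z,\bar\lambda),0)(\nu)$, and some $\eta\in D^*F(\bar z,\bar\lambda)(\mu)$. Feeding this decomposition into the implicit M-stationarity relation $0\in\partial f(\bar z)+D^*H(\bar z,0)(\nu)+\mathcal N_M(\bar z)$ and regrouping the summands yields
\[
    0\in\partial f(\bar z)+D^*F(\bar z,\bar\lambda)(\mu)+\{\zeta\in\R^n\,|\,(\zeta,\mu)\in D^*G((\bar z,\bar\lambda),0)(\nu)\}+\mathcal N_M(\bar z),
\]
which is exactly explicit M-stationarity of $\bar z$ with respect to $\bar\lambda\in K(\bar z)$.

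I expect no genuine difficulty beyond careful bookkeeping: the argument is a translation of the generic chain rule into the notation of \eqref{eq:implicit_multipliers}. The only slightly delicate steps are recognizing the chain-rule feasibility mapping as \eqref{eq:New_feas_map} (a matter of rewriting $\tilde F(z)-(q,\lambda)$) and tracking the coderivative variable $\mu$ that is shared between the $F$- and $G$-parts, so that the final inclusion matches \cref{def:Mst_explicit_variable}(iii) verbatim.
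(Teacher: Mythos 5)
Your argument is correct and is exactly the paper's own proof: the paper establishes this proposition by observing $H = G\circ\tilde F$, computing $D^*\tilde F(z,(z,\lambda))(\xi,\mu)=\xi+D^*F(z,\lambda)(\mu)$ via \cref{lem:equality_coderivative_products}\,(a), and invoking \cref{lem:chain_rule}, whose intermediate and feasibility mappings specialize precisely to \eqref{eq:New_aux_map} and \eqref{eq:New_feas_map}. Your write-up simply makes explicit the bookkeeping that the paper leaves to the reader.
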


On the other hand, using the intermediate mapping $\widehat{K}$ from \eqref{eq:simple_intermediate_map},
one obtains
\begin{equation*}
 \gph H=\dom \widehat K, \qquad\qquad
 \gph \widehat K = \widehat{\mathcal{H}}^{-1}(0,0)
\end{equation*}
for the feasibility mapping
$\widehat{\mathcal{H}}\colon\R^n\times\R^s\times\R^m\tto \R^m\times\R^s$ given by
			\begin{equation}\label{eq:simple_feasible_map}
			\forall z\in\R^n,\,\forall w\in\R^s,\,\forall \lambda\in\R^m\colon\quad
 			\widehat{\mathcal{H}}(z,w,\lambda) := 
 				\begin{pmatrix}
        			F(z) - \lambda \\
        			G(z,\lambda) - w
        		\end{pmatrix}
        		= \mathcal{H}(z,\lambda) -
        		\begin{pmatrix}
        			0 \\
        			w
        		\end{pmatrix}.
			\end{equation}
Keeping \cref{lem:chain_rule} and the subsequently stated remarks in mind,
the coderivative of $H$ can be estimated via the coderivative of $\widehat{\mathcal{H}}$ under suitable assumptions.
More precisely, for $(a,b)$ satisfying $a\in D^*H(\bar z,0)(b)$,
 the above approach yields
 the existence of $\bar\lambda\in K(\bar z)$
 together with $\mu\in\R^m$ and $\nu\in\R^s$ such that
 \begin{equation*}
    (a,-b,0) \in D^*\widehat{\mathcal{H}}((\bar z,0,\bar\lambda),(0,0))(\mu,\nu).
 \end{equation*}
 Consequently, the decoupled sum rule from \cite[Section~5.4]{BenkoMehlitz2020}
 implies that the relations $\nu=b$ and $(a,0) \in D^*\mathcal{H}((\bar z,\bar\lambda),(0,0))(\mu,\nu)$
 need to hold, and for the particular choice
 $a \in -\partial f(\bar z) - \mathcal N_M(\bar z)$,
 we end up with fuzzy M-stationarity.

We summarize these observations in the subsequent proposition.
 
\begin{proposition}\label{prop:from_implicit_to_explicit_M_stationarity_1}
	Let $\bar z\in\R^n$ be an implicitly M-stationary point of \eqref{eq:implicit_multipliers}.
	Assume that $\widehat K$ is inner semicompact at $(\bar z,0)$ w.r.t.\ $\dom\widehat K$,
	and	let $\widehat{\mathcal{H}}$ be metrically subregular at $((\bar z,0,\lambda),(0,0))$ for each
	$\lambda\in K(\bar z)$. Then $\bar z$ is fuzzily M-stationary.
	Moreover, $\bar z$ is also explicitly M-stationary if \eqref{eq:coder_CalH} holds
	for each $\lambda\in K(\bar z)$.
\end{proposition}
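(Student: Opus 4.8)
The plan is to follow the route sketched immediately before the statement: transport the implicit M-stationarity inclusion through the intermediate mapping $\widehat K$ from \eqref{eq:simple_intermediate_map} and the feasibility mapping $\widehat{\mathcal H}$ from \eqref{eq:simple_feasible_map}, exploiting the two structural identities $\gph H=\dom\widehat K$ and $\gph\widehat K=\widehat{\mathcal H}^{-1}(0,0)$. Since $\bar z$ is implicitly M-stationary, I would first fix $\nu_0\in\R^s$ and $a\in D^*H(\bar z,0)(\nu_0)$ with $-a\in\partial f(\bar z)+\mathcal N_M(\bar z)$, i.e.\ $(a,-\nu_0)\in\mathcal N_{\gph H}(\bar z,0)=\mathcal N_{\dom\widehat K}(\bar z,0)$.

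The first step is to lift this normal from $\dom\widehat K$ to $\gph\widehat K$ using the inner semicompactness of $\widehat K$ at $(\bar z,0)$ w.r.t.\ $\dom\widehat K$: this normal-cone-to-domain estimate (the same connection used in the proof of \cref{lem:chain_rule}, see \cite[Sections~3 and~5.3]{BenkoMehlitz2020}) yields $\bar\lambda\in\widehat K(\bar z,0)=K(\bar z)$ with $(a,-\nu_0,0)\in\mathcal N_{\gph\widehat K}(\bar z,0,\bar\lambda)$. Since $\gph\widehat K=\widehat{\mathcal H}^{-1}(0,0)$ and $\widehat{\mathcal H}$ is metrically subregular at $((\bar z,0,\bar\lambda),(0,0))$ by assumption, \cref{Pro:MSRMain} then produces $\mu\in\R^m$ and $\nu\in\R^s$ with $(a,-\nu_0,0)\in D^*\widehat{\mathcal H}((\bar z,0,\bar\lambda),(0,0))(\mu,\nu)$. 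Nothing new is required here; only the two ingredients ``inner semicompactness links $\gph H$ with $\gph\widehat K$'' and ``metric subregularity links $\gph\widehat K$ with $\gph\widehat{\mathcal H}$ via \cref{Pro:MSRMain}'' are invoked, exactly as in \cref{lem:chain_rule}.

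Next I would unwind $\widehat{\mathcal H}$ with the decoupled sum rule from \cite[Section~5.4]{BenkoMehlitz2020}. Because $\widehat{\mathcal H}(z,w,\lambda)=\mathcal H(z,\lambda)-(0,w)$ with $\mathcal H$ independent of $w$, one has $D^*\widehat{\mathcal H}((\bar z,0,\bar\lambda),(0,0))(\mu,\nu)=\{(\xi,-\nu,\zeta)\mv(\xi,\zeta)\in D^*\mathcal H((\bar z,\bar\lambda),(0,0))(\mu,\nu)\}$; matching the $w$-component forces $\nu=\nu_0$, while the remaining components give $(a,0)\in D^*\mathcal H((\bar z,\bar\lambda),(0,0))(\mu,\nu_0)$. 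Decomposing $-a=p+q$ with $p\in\partial f(\bar z)$ and $q\in\mathcal N_M(\bar z)$ yields $(0,0)\in\partial f(\bar z)\times\{0\}+D^*\mathcal H((\bar z,\bar\lambda),(0,0))(\mu,\nu_0)+\mathcal N_M(\bar z)\times\{0\}$, which is fuzzy M-stationarity w.r.t.\ $\bar\lambda$ in the sense of \cref{def:Mst_explicit_variable}(ii). Finally, if \eqref{eq:coder_CalH} holds for $\lambda=\bar\lambda$, inserting it into $(a,0)\in D^*\mathcal H((\bar z,\bar\lambda),(0,0))(\mu,\nu_0)$ provides $\xi_F\in D^*F(\bar z,\bar\lambda)(\mu)$ and $(\xi_G,\mu)\in D^*G((\bar z,\bar\lambda),0)(\nu_0)$ with $a=\xi_F+\xi_G$ (the $\{-\mu\}$-slot of \eqref{eq:coder_CalH} being pinned down by the vanishing $\lambda$-component), and substituting back into $-a=p+q$ gives precisely the explicit M-stationarity system of \cref{def:Mst_explicit_variable}(iii).

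The step I expect to be the main obstacle is the first transfer — making rigorous both the passage from a normal of $\dom\widehat K$ to one of $\gph\widehat K$ (inner semicompactness) and the passage from a normal of $\widehat{\mathcal H}^{-1}(0,0)$ to the coderivative of $\widehat{\mathcal H}$ (metric subregularity and \cref{Pro:MSRMain}). Since this is exactly the construction already performed on the way to \cref{lem:chain_rule}, in practice the proof reduces to quoting that construction, applying the decoupled sum rule, and carrying out the component bookkeeping; everything else is routine.
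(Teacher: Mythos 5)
Your proposal is correct and follows essentially the same route as the paper: the paper's argument is precisely the text preceding the proposition, which uses the identities $\gph H=\dom\widehat K$ and $\gph\widehat K=\widehat{\mathcal H}^{-1}(0,0)$, the inner semicompactness of $\widehat K$ together with metric subregularity of $\widehat{\mathcal H}$ (via \cref{Pro:MSRMain}), and the decoupled sum rule to force $\nu=\nu_0$ and obtain $(a,0)\in D^*\mathcal H((\bar z,\bar\lambda),(0,0))(\mu,\nu_0)$, before specializing $a\in-\partial f(\bar z)-\mathcal N_M(\bar z)$. Your component bookkeeping for the passage from \eqref{eq:coder_CalH} to explicit M-stationarity (in particular pinning $\mu_G=\mu$ from the vanishing $\lambda$-component) is also exactly what the paper intends.
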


The metric subregularity assumption in
\cref{prop:from_implicit_to_explicit_M_stationarity_1}
can again be replaced by the stronger Mordukhovich criterion.
\begin{corollary}\label{cor:from_implicit_to_explicit_M_St}
	Let $\bar z\in\R^n$ be an implicitly M-stationary point for \eqref{eq:implicit_multipliers}.
	Let $\widehat K$ be inner semicompact at $(\bar z,0)$ w.r.t.\ $\dom\widehat K$.
	\begin{enumerate}
	\item[(a)]
		Assume that, for each $\lambda\in K(\bar z)$, the constraint qualification
		\begin{equation}\label{eq:abstract_CQ}
		(0,0) \in D^*\mathcal{H}((\bar z,\lambda),(0,0))(\mu,0)
		\quad \Longrightarrow \quad 
		\mu = 0
		\end{equation}
		holds, which is inherent whenever $\mathcal{H}$
		is metrically regular at $((\bar z,\lambda),(0,0))$.
   		Then $\bar z$ is fuzzily M-stationary.
   		If, additionally, \eqref{eq:coder_CalH} holds for each 
   		$\lambda\in K(\bar z)$, then $\bar z$ is explicitly M-stationary.
   	\item[(b)] Assume that \eqref{eq:coder_CalH} holds for each $\lambda\in K(\bar z)$
   		and let	
		\begin{equation}\label{eq:metric_regularity_CQ}
		\xi \in D^*F(\bar z,\lambda)(\mu),\,
		(-\xi,\mu) \in D^*G((\bar z,\lambda),0)(0)
		\quad\Longrightarrow\quad 
		\mu = 0
		\end{equation}
		be valid for each $\lambda\in K(\bar z)$
		which is inherent whenever $G$ posseses the Aubin property at $((\bar z,\lambda),0)$
		for each $\lambda\in K(\bar z)$.
		Then $\bar z$ is explicitly M-stationary.
	\item[(c)] Assume that
		\begin{equation}\label{eq:strong_CQ}
		\xi \in D^*F(\bar z,\lambda)(\mu),\,
		(-\xi,\mu) \in D^*G((\bar z,\lambda),0)(0)
		\quad\Longrightarrow\quad 
		\xi,\mu = 0
		\end{equation}
		is valid for each $\lambda\in K(\bar z)$
		which is inherent whenever $G$ posseses the Aubin property at $((\bar z,\lambda),0)$
		for each $\lambda\in K(\bar z)$.
		Then $\bar z$ is explicitly M-stationary.
	\end{enumerate}
\end{corollary}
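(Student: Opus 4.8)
The plan is to derive all three assertions from \cref{prop:from_implicit_to_explicit_M_stationarity_1}: since $\bar z$ is implicitly M-stationary and $\widehat K$ is inner semicompact at $(\bar z,0)$ w.r.t.\ $\dom\widehat K$ by hypothesis, the only ingredient still to be supplied is metric subregularity of the feasibility mapping $\widehat{\mathcal H}$ from \eqref{eq:simple_feasible_map} at $((\bar z,0,\lambda),(0,0))$ for each $\lambda\in K(\bar z)$, and it remains to extract this property from the respective constraint qualification. In each case I would then invoke \cref{prop:from_implicit_to_explicit_M_stationarity_1} to conclude.

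For part~(a), I would first exploit that $\widehat{\mathcal H}(z,w,\lambda)=\mathcal H(z,\lambda)-(0,w)$ adds to the set-valued mapping $\mathcal H$ --- which does not depend on $w$ --- a single-valued linear term depending only on $w$. Hence the decoupled sum rule from \cite[Section~5.4]{BenkoMehlitz2020} yields
\[
	D^*\widehat{\mathcal H}((\bar z,0,\lambda),(0,0))(\mu,\nu)
	\subset
	\bigl\{(z^*,-\nu,\lambda^*)\,\big|\,(z^*,\lambda^*)\in D^*\mathcal H((\bar z,\lambda),(0,0))(\mu,\nu)\bigr\}.
\]
Thus $0\in D^*\widehat{\mathcal H}((\bar z,0,\lambda),(0,0))(\mu,\nu)$ forces $\nu=0$ and $(0,0)\in D^*\mathcal H((\bar z,\lambda),(0,0))(\mu,0)$, so \eqref{eq:abstract_CQ} gives $\ker D^*\widehat{\mathcal H}((\bar z,0,\lambda),(0,0))=\{0\}$. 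By the Mordukhovich criterion, see \cite[Theorem~3.3]{Mordukhovich2018}, this means $\widehat{\mathcal H}$ is metrically regular --- hence metrically subregular --- at $((\bar z,0,\lambda),(0,0))$, and \cref{prop:from_implicit_to_explicit_M_stationarity_1} finishes~(a). The addendum is immediate: metric regularity of $\mathcal H$ at $((\bar z,\lambda),(0,0))$ is exactly $\ker D^*\mathcal H((\bar z,\lambda),(0,0))=\{0\}$, which certainly entails \eqref{eq:abstract_CQ}.

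For part~(b) I would check that \eqref{eq:coder_CalH} together with \eqref{eq:metric_regularity_CQ} implies \eqref{eq:abstract_CQ}, which reduces~(b) to~(a); since \eqref{eq:coder_CalH} belongs to the hypotheses of~(b), part~(a) then yields \emph{explicit} M-stationarity. Indeed, if $(0,0)\in D^*\mathcal H((\bar z,\lambda),(0,0))(\mu,0)$, then \eqref{eq:coder_CalH} with $\nu=0$ produces $\xi\in D^*F(\bar z,\lambda)(\mu)$ and $(\eta,\zeta)\in D^*G((\bar z,\lambda),0)(0)$ with $\xi+\eta=0$ and $\zeta=\mu$, so $\xi\in D^*F(\bar z,\lambda)(\mu)$ and $(-\xi,\mu)\in D^*G((\bar z,\lambda),0)(0)$, whence \eqref{eq:metric_regularity_CQ} gives $\mu=0$. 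For part~(c) I would verify that \eqref{eq:strong_CQ} implies both hypotheses of~(b): specializing \eqref{eq:strong_CQ} to $\mu=0$ returns exactly the sufficient condition for \eqref{eq:coder_CalH} recorded in \cref{rem:fuzzy_vs_explicit_M_stationarity} (obtained via \cref{lem:product_rule}), so \eqref{eq:coder_CalH} holds for every $\lambda\in K(\bar z)$, and \eqref{eq:strong_CQ} trivially implies the weaker \eqref{eq:metric_regularity_CQ}; thus~(b) applies. The Aubin-property addenda in~(b) and~(c) are clear since that property means $D^*G((\bar z,\lambda),0)(0)=\{0\}$, which makes the premises of \eqref{eq:metric_regularity_CQ} and \eqref{eq:strong_CQ} void.

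The only genuinely nontrivial step is the coderivative inclusion for $\widehat{\mathcal H}$ underpinning~(a); everything else is a routine chain of implications among the listed qualification conditions. If one prefers not to invoke the decoupled sum rule, one may argue directly that $\gph\widehat{\mathcal H}$ is the image of $\gph\mathcal H\times\R^s$ under an invertible linear map and that limiting normal cones --- hence coderivatives --- transform accordingly, which gives the displayed description (in fact with equality).
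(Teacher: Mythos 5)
Your proposal is correct and follows essentially the same route as the paper: part (a) via the coderivative relation between $\widehat{\mathcal H}$ and $\mathcal H$ (the paper likewise uses the decoupled sum rule discussed above \cref{prop:from_implicit_to_explicit_M_stationarity_1} to obtain the kernel identity, then the Mordukhovich criterion and \cref{prop:from_implicit_to_explicit_M_stationarity_1}), part (b) by reducing \eqref{eq:metric_regularity_CQ} together with \eqref{eq:coder_CalH} to \eqref{eq:abstract_CQ}, and part (c) by noting that \eqref{eq:strong_CQ} implies both \eqref{eq:metric_regularity_CQ} and \eqref{eq:coder_CalH}. You merely spell out in more detail the steps the paper leaves implicit.
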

\begin{proof}
	Let us start with the proof of statement (a).
	The relation between the coderivatives of $\mathcal H$ and $\widehat{\mathcal H}$,
	which has been discussed above
    \cref{prop:from_implicit_to_explicit_M_stationarity_1},
	yields 
	\[
		\ker D^*\widehat{\mathcal H}((\bar z,0,\lambda),(0,0))
		=
		\bigl\{(\mu,0)\,\big|\,(\mu,0)\in\ker D^*\mathcal H((\bar z,\lambda),(0,0))\bigr\}
	\]	
	for each $\lambda\in K(\bar z)$.
	Thus, the constraint qualification \eqref{eq:abstract_CQ} implies
	metric regularity of $\widehat{\mathcal{H}}$ at the point
	$((\bar z,0,\lambda),(0,0))$ for each $\lambda\in K(\bar z)$ by
	means of the Mordukhovich criterion.
	Thus, the assertion follows from \cref{prop:from_implicit_to_explicit_M_stationarity_1}.
	For the proof of (b), we observe that due to validity of \eqref{eq:coder_CalH} for each 
	$\lambda\in K(\bar z)$, \eqref{eq:metric_regularity_CQ} implies \eqref{eq:abstract_CQ}
	for each $\lambda\in K(\bar z)$, i.e., the assertion follows from statement (a).
	Finally, (c) follows from (b) observing that
	\eqref{eq:strong_CQ} implies validity of \eqref{eq:metric_regularity_CQ}
	as well as \eqref{eq:coder_CalH} by means of \cref{rem:fuzzy_vs_explicit_M_stationarity}.
\end{proof}

Let us mention that, in general, explicitly M-stationary points do not need to be implicitly M-stationary,
see e.g.\ \cref{sec:app_CCMP} and \cite[Section~4]{Mehlitz2019b} where this issue is visualized 
in the context of cardinality- and or-constrained programming, respectively.

As a consequence of \cref{prop:Stationarities_via_subreguarity,prop:from_implicit_to_explicit_M_stationarity_1,prop:from_implicit_to_explicit_M_St}, 
we obtain the following constraint qualifications which guarantee that
a given local minimizer of \eqref{eq:implicit_multipliers} is explicitly M-stationary.
\begin{theorem}\label{thm:CQs_for_M_stationarities}
	Let $\bar z\in \R^n$ be a local minimizer of \eqref{eq:implicit_multipliers}
 	and consider the following assumptions:
 	\begin{enumerate}
 		\item[(a)] $H_M$ is metrically subregular at $(\bar z,(0,0))$,
  			$\widehat K$ is inner semicompact at $(\bar z,0)$ w.r.t.\ $\dom \widehat K$,
  			and for each $\lambda\in K(\bar z)$,
  			$\widehat{\mathcal{H}}$ is metrically subregular at $((\bar z,0,\lambda),(0,0))$ 
  			while the inclusion \eqref{eq:coder_CalH} holds,
  		\item[(b)] $H_M$ is metrically subregular at $(\bar z,(0,0))$,
  			the mapping defined in \eqref{eq:New_aux_map} 
  			is inner semicompact at $(\bar z,0)$ w.r.t.\ its domain,
  			and for each $\lambda\in K(\bar z)$, the mapping defined in \eqref{eq:New_feas_map} is 
  			metrically subregular at 
  			$((\bar z,0,\bar z,\lambda),(0,0,0))$,
  		\item[(c)] $\mathcal{H}_M$ is metrically subregular at $((\bar z,\bar\lambda),(0,0,0))$
  			for some $\bar\lambda\in K(\bar z)$ and the inclusion 
  			\textup{\hyperref[eq:coder_CalH]{(Inc$(\bar\lambda)$)}} holds,
  		\item[(d)] $\mathfrak H_M$ is metrically subregular at 
  			$((\bar z,\bar \lambda),((0,0),(0,0,0),0))$
  			for some $\bar\lambda\in K(\bar z)$.
 	\end{enumerate}
 	Then each of (a), (b), (c), and (d) implies that $\bar z$ is explicitly M-stationary.
\end{theorem}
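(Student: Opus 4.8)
\emph{Proof proposal.} The plan is simply to assemble the four implications from the propositions already established in this section; no new tool is needed. In all four cases note first that $\bar z\in Z=M\cap\dom K$ (it is feasible, being a local minimizer), so $K(\bar z)\neq\varnothing$ and the per-$\lambda$ hypotheses are nonvacuous. For cases (a) and (b) the common first assumption is that $H_M$ is metrically subregular at $(\bar z,(0,0))$; since $\bar z$ is a local minimizer of \eqref{eq:implicit_multipliers}, \cref{prop:Stationarities_via_subreguarity}(a) yields at once that $\bar z$ is implicitly M-stationary. Under the remaining hypotheses of (a) — inner semicompactness of $\widehat K$ at $(\bar z,0)$ w.r.t.\ $\dom\widehat K$, metric subregularity of $\widehat{\mathcal H}$ at $((\bar z,0,\lambda),(0,0))$ for each $\lambda\in K(\bar z)$, and validity of \eqref{eq:coder_CalH} for each such $\lambda$ — I would invoke \cref{prop:from_implicit_to_explicit_M_stationarity_1} to upgrade implicit to explicit M-stationarity. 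Under the remaining hypotheses of (b) — inner semicompactness of the mapping \eqref{eq:New_aux_map} at $(\bar z,0)$ w.r.t.\ its domain, and metric subregularity of \eqref{eq:New_feas_map} at $((\bar z,0,\bar z,\lambda),(0,0,0))$ for each $\lambda\in K(\bar z)$ — I would instead invoke \cref{prop:from_implicit_to_explicit_M_St}, which gives explicit M-stationarity directly.

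For case (c), the metric subregularity of $\mathcal H_M$ at $((\bar z,\bar\lambda),(0,0,0))$ for some $\bar\lambda\in K(\bar z)$ gives, via \cref{prop:Stationarities_via_subreguarity}(b), fuzzy M-stationarity w.r.t.\ $\bar\lambda$, i.e.\ the existence of $\mu\in\R^m$, $\nu\in\R^s$ with $(0,0)\in\partial f(\bar z)\times\{0\}+D^*\mathcal H((\bar z,\bar\lambda),(0,0))(\mu,\nu)+\mathcal N_M(\bar z)\times\{0\}$. Plugging in the inclusion \eqref{eq:coder_CalH} for $\lambda=\bar\lambda$ and reading off the two components, the $\R^m$-component forces the $\R^m$-part of the chosen element of $D^*G((\bar z,\bar\lambda),0)(\nu)$ to coincide with $\mu$, while the $\R^n$-component then reproduces verbatim the defining inclusion of explicit M-stationarity w.r.t.\ $\bar\lambda$ (here one uses $D^*F(\bar z,\bar\lambda)(\mu)\subset\R^n$ and $D^*G((\bar z,\bar\lambda),0)(\nu)\subset\R^n\times\R^m$, paired according to the $(z,\lambda)$-splitting). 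I would write out this short component-matching argument explicitly. Case (d) is immediate: \cref{prop:Stationarities_via_subreguarity}(c) applied with the assumed metric subregularity of $\mathfrak H_M$ at $((\bar z,\bar\lambda),((0,0),(0,0,0),0))$ directly yields explicit M-stationarity w.r.t.\ $\bar\lambda$.

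Finally, in every case "explicitly M-stationary w.r.t.\ some $\bar\lambda\in K(\bar z)$'' is by \cref{def:Mst_explicit_variable}(iii) the same as "explicitly M-stationary'', which closes the argument. I do not anticipate a genuine obstacle here: the conceptual content is entirely carried by the cited propositions and by the chain/product calculus behind \eqref{eq:coder_CalH}, so the theorem is a bookkeeping synthesis. The only spot needing an actual (if two-line) computation is the component chase in case (c), and the only point worth double-checking throughout is that the domains/codomains of the coderivatives $D^*F$, $D^*G$, $D^*\mathcal H$, $D^*H$ line up with the various product splittings used in the four constructions.
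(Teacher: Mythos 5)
Your proposal is correct and follows exactly the route the paper intends: the theorem is stated there as an immediate consequence of \cref{prop:Stationarities_via_subreguarity}, \cref{prop:from_implicit_to_explicit_M_stationarity_1}, and \cref{prop:from_implicit_to_explicit_M_St}, with cases (a)/(b) passing through implicit M-stationarity, case (d) being direct, and case (c) handled by the component-matching argument you describe (which is the "simple computation" alluded to in \cref{rem:fuzzy_vs_explicit_M_stationarity}). No gaps.
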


Once more, let us emphasize that validity of the inclusion
\eqref{eq:coder_CalH} can be guaranteed under not too restrictive conditions
which can be found in \cref{lem:coderivative_product,lem:equality_coderivative_products},
see \cref{rem:fuzzy_vs_explicit_M_stationarity} as well.

In \cref{sec:optimality_conditions_via_special_structure}, 
we will see that the most essential
ingredients from \cref{thm:CQs_for_M_stationarities} are the metric subregularity
assumptions on $H_M$, $\mathcal H_M$, and $\mathfrak H_M$, respectively,
while the other requirements can often be guaranteed by simpler structure of the problem
or by suitable qualification conditions.

A general purpose of necessary optimality conditions is to shrink the feasible set
down to a (hopefully) small number of points which are potential candidates for
local minimizers. Since we are interested in finding local minimizers $\bar z\in \R^n$ of 
\eqref{eq:implicit_multipliers} even if \eqref{eq:explicit_multipliers} is under
consideration, \cref{thm:local_relationship} underlines that validity of
explicit M-stationarity w.r.t.\ \emph{each} $\bar\lambda\in K(\bar z)$
is desirable for that purpose since this corresponds to standard
M-stationarity for \eqref{eq:explicit_multipliers} at its local minimizers $(\bar z,\bar\lambda)$
under mild assumptions, see \cref{rem:fuzzy_vs_explicit_M_stationarity} again.
In order to infer this directly from \eqref{eq:explicit_multipliers}, one
has to impose metric subregularity of $\mathfrak H_M$ at all points
$((\bar z,\bar \lambda),((0,0),(0,0,0),0))$ such that $\bar\lambda\in K(\bar z)$ holds.
Postulating that the intermediate mapping $\widehat K$ from 
\eqref{eq:simple_intermediate_map} is inner semicompact w.r.t.\ $\dom\widehat K$ at
$(\bar z,0)$ (which is inherent in several underlying applications), 
\cref{prop:MSR_of_H_via_cal_H,prop:MSR_of_cal_H_via_frak_H} guarantee that
$H_M$ is metrically subregular at $(\bar z,(0,0))$. 
Due to \cref{prop:Stationarities_via_subreguarity}, this is already enough
to ensure that the local minimizer $\bar z$ of \eqref{eq:implicit_multipliers}
is implicitly M-stationary.
In this regard, the constraint qualifications
one has to impose on \eqref{eq:explicit_multipliers} in order to find potential
candidates for local minimizers of \eqref{eq:implicit_multipliers} via the
associated M-stationarity conditions are in several situations not weaker
than the constraint qualifications one has to impose directly on \eqref{eq:implicit_multipliers}
in order to infer implicit M-stationarity as a necessary optimality condition.
In the context of bilevel optimization, the counterexamples in \cite[Section~3.1]{AdamHenrionOutrata2018}
depict that the converse statement of \cref{prop:MSR_of_H_via_cal_H} does not hold
in general. This also shows that, in some situations, the conditions in 
statement (a) of \cref{thm:CQs_for_M_stationarities} might be weaker than the 
condition from statement (d) (but for all $\bar\lambda\in K(\bar z)$) in this regard. 
Let us sum up some important points:
\begin{enumerate}
	\item[(i)] it may happen that $\bar z$ is implicitly M-stationary while it is \emph{not}
		explicitly M-stationary w.r.t.\ \emph{all} elements of $K(\bar z)$
		(in case where the chain rule is applicable in order to compute the
		coderivative of $H$, one can only ensure explicit M-stationarity w.r.t.\
		those implicit variables which are \emph{active} in the union appearing in the
		chain rule), however, under mild assumptions, explicit M-stationarity 
		for \emph{some} instance of the implicit variable can be derived,
	\item[(ii)] if $\bar z$ is a local minimizer of \eqref{eq:implicit_multipliers}, then
		due to \cref{thm:local_relationship}, explicit M-stationarity w.r.t.\ \emph{all} elements of
		$K(\bar z)$ is a reasonable necessary optimality condition for
		\eqref{eq:implicit_multipliers}	(under suitable constraint qualifications), and
	\item[(iii)] the constraint qualifications needed to infer implicit M-stationarity of
		a local minimizer $\bar z$ of \eqref{eq:implicit_multipliers}
		($H_M$ metrically subregular at some reference point) 
		and to obtain explicit M-stationarity from that,
		see \cref{prop:from_implicit_to_explicit_M_St,prop:from_implicit_to_explicit_M_stationarity_1}
		as well as \cref{cor:from_implicit_to_explicit_M_St},
		might still be weaker than the 
		constraint qualifications needed to infer explicit M-stationarity w.r.t.\
		\emph{all} $\bar\lambda\in K(\bar z)$ directly from 
		\eqref{eq:explicit_multipliers}.
\end{enumerate}
These observations underline that although the
use of implicit variables as explicit ones might be beneficial for computational purposes,
this transformation is disadvantageous since it comes for the price of additional artificial
local minimizers and potentially stronger constraint qualifications. 
Instead of treating implicit variables as explicit ones,
one should keep them implicit while exploiting the inherent underlying
problem structure as long as possible in order to infer reasonably weak constraint qualifications
ensuring validity of useful necessary optimality conditions at local minimizers.  

Let us mention some positive features of implicit variables regarding optimality
conditions. Clearly, whenever we are given some $\tilde z\in\R^n$ feasible to
\eqref{eq:implicit_multipliers} and some $\tilde\lambda\in K(\tilde z)$ such that
$\mathfrak H_M$ is metrically subregular at $((\tilde z,\tilde \lambda),((0,0),(0,0,0),0))$
while $\tilde z$ is \emph{not} explicitly M-stationary w.r.t.\ $\tilde \lambda$,
then $\tilde z$ cannot be a local minimizer of \eqref{eq:implicit_multipliers},
see \cref{prop:Stationarities_via_subreguarity}.
This means that implicit variables can be used to infer suboptimality conditions for
\eqref{eq:implicit_multipliers}.
Secondly, let us mention that it might happen that a practically \emph{useful}
representation of the coderivative of $H$ is not available for some 
underlying applications while the computation of the coderivative of $F$ and $G$ might
be possible in terms of initial problem data, see e.g.\ \cref{sec:app_multicriteria_optimization}. 
Then the explicit M-stationarity conditions
of \eqref{eq:implicit_multipliers} might be applicable while the implicit counterpart
is of limited practical use. Of course, under rather mild assumptions one can estimate the
coderivative of $H$ via the coderivatives of $F$ and $G$,
but this is not completely for free in general.

\subsection{Necessary optimality conditions under additional structural assumptions}
	\label{sec:optimality_conditions_via_special_structure}

In this section, we show how the situation from \cref{sec:M_Stationarity_conditions}
simplifies in specific settings that cover our example problems
from \cref{sec:examples}.
To this end, we impose that the following assumption on the problem data holds
throughout the section.
\begin{assumption}\label{ass:special_structure}
The set $\gph H$ is closed, and the problem data satisfies one of the following 
assumptions on the set-valued mapping $G$:
\begin{enumerate}
  	\item[(i)] $G(z,\lambda) := g(z,\lambda) - \Theta$ holds for all $z\in\R^n$ and $\lambda\in\R^m$
  		where $g\colon\R^n\times\R^m\to\R^s$ is a continuously differentiable function and 
  		$\Theta\subset\R^s$ is a closed set,
  	\item[(ii)] $G(z,\lambda):=\widetilde G(\lambda) + \tilde g(z)$	
  		holds for all $z\in\R^n$ and $\lambda\in\R^m$
  		where $\widetilde G\colon\R^m\tto\R^s$ is a set-valued mapping with a closed graph
  		and $\tilde g\colon\R^n\to\R^s$ is a continuously differentiable function.
\end{enumerate}
\end{assumption}
Let us point out that (i) covers the setting of bilevel programming from 
\eqref{eq:setting_bilevel_programming} as well as
the setting of cardinality-constrained optimization from
\eqref{eq:setting_CCMP}, while (ii) covers
the setting of evaluated multiobjective programming from 
\eqref{eq:setting_multiobjective_optimization}.

Statements (a) and (b) of \cref{lem:equality_coderivative_products} yield
that the inclusion \eqref{eq:coder_CalH} holds with equality under any
of the assumptions (i) or (ii).
Thus, due to \cref{rem:fuzzy_vs_explicit_M_stationarity}, 
fuzzy M-stationarity in fact coincides with explicit M-stationarity
and so we can work only with the latter.
Particularly, we will only consider the 
implicit and explicit M-stationarity conditions with the respective
constraint qualification being metric subregularity of $H_M$ and $\mathcal{H}_M$.

For the remaining part,
let us add the inner semicompactness of $\widehat K$ w.r.t.\ $\dom\widehat K$ at $(\bar z,0)$ 
for some feasible point $\bar z\in\R^n$ of \eqref{eq:implicit_multipliers} 
to the standing assumption. Then the comparison of the metric subregularity
conditions reduces to simply saying that
$H_M$ is metrically subregular at $(\bar z,(0,0))$ provided
$\mathcal{H}_M$ is metrically subregular at $((\bar z,\lambda),(0,0,0))$
for all $\lambda \in K(\bar z)$, 
see \cref{prop:MSR_of_H_via_cal_H}.

Finally, we discuss when implicit M-stationarity implies explicit M-stationarity.
Again, the statements (a) and (b) of \cref{lem:equality_coderivative_products} 
show that there is now no problem with the computation of
the coderivative of the mapping $\mathcal H$
and the same applies to the mapping $\widehat{\mathcal H}$
from \eqref{eq:simple_feasible_map} due to
the arguments above \cref{prop:from_implicit_to_explicit_M_stationarity_1}.
Particularly, we infer that $\widehat{\mathcal H}$ is metrically regular
at $((\bar z,0,\lambda),(0,0))$ for $\lambda\in K(\bar z)$ if and only
if the constraint qualification \eqref{eq:metric_regularity_CQ} is valid.
One can check that the latter is inherently satisfied in the setting (i)
since $G$ possesses the Aubin property in this case.
If (ii) holds, \eqref{eq:metric_regularity_CQ} boils down to
\begin{equation*}
    \ker D^*F(\bar z,\lambda) \cap D^* \widetilde G(\lambda, -\tilde g(\bar z))(0) = \{0\}.
\end{equation*}

We sum up all these arguments in the following proposition.
\begin{proposition}\label{prop:explicit_M_stationarity_in_particular_settings}
 	Let $\bar z\in\R^n$ be implicitly M-stationary for problem \eqref{eq:implicit_multipliers} 
 	and let $\widehat K$ be inner semicompact w.r.t.\ $\dom\widehat K$
 	at $(\bar z,0)$. Then the following assertions hold.
 	\begin{enumerate}
  		\item[(a)] Let $G$ be given as stated in (i).
  			Then $\bar z$ is explicitly M-stationary, i.e., there exist 
  			$\bar \lambda \in K(\bar z)$ and 
  			$\nu \in \mathcal N_{\Theta}(g(\bar z,\bar \lambda))$ such that
  			\[
  				0\in\partial f(\bar z)
            	+
            	D^*F(\bar z,\bar\lambda)(\nabla_{\lambda} g(\bar z,\bar \lambda)^\top \nu)
            	+ 
            	\nabla_{z} g(\bar z,\bar \lambda)^\top \nu
				+
				\mathcal N_M(\bar z).
			\]
 	\item[(b)] Let $G$ be given as stated in (ii) and let $\widehat{\mathcal H}$ be metrically subregular 
  			at $((\bar z,0,\lambda),(0,0))$
  			for each $\lambda \in K(\bar z)$. 
  			Then $\bar z$ is explicitly
  			M-stationary, i.e., there exist $\bar \lambda \in K(\bar z)$
  			and $\nu \in \R^s$ such that
  			\[
  				0\in\partial f(\bar z)
            	+
            	\bigl(
            		D^*F(\bar z,\bar\lambda)\circ D^* \widetilde G(\bar \lambda, -\tilde g(\bar z))
            	\bigr)(\nu)
				+ 
				\nabla \tilde g(\bar z)^\top \nu
				+
				\mathcal N_M(\bar z).
			\]
 	\end{enumerate}
\end{proposition}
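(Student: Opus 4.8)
The plan is to derive both assertions as direct specializations of \cref{prop:from_implicit_to_explicit_M_stationarity_1} combined with \cref{lem:equality_coderivative_products}. Since $\bar z$ is implicitly M-stationary and $\widehat K$ is inner semicompact at $(\bar z,0)$ w.r.t.\ $\dom\widehat K$, the only remaining ingredient needed to apply \cref{prop:from_implicit_to_explicit_M_stationarity_1} is the metric subregularity of $\widehat{\mathcal H}$ at $((\bar z,0,\lambda),(0,0))$ for each $\lambda\in K(\bar z)$, together with validity of \eqref{eq:coder_CalH} for each such $\lambda$. The latter inclusion is already granted: under \cref{ass:special_structure}(i) or (ii), statements (a) and (b) of \cref{lem:equality_coderivative_products} give \eqref{eq:coder_CalH} with equality, as noted in the text preceding this proposition. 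So the whole argument reduces to (1) verifying the metric subregularity of $\widehat{\mathcal H}$, and (2) rewriting the abstract explicit M-stationarity condition from \cref{def:Mst_explicit_variable}(iii) in terms of the concrete coderivatives of $F$ and the single-valued data, which is where the displayed formulas come from.

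For part (a), the first step is to observe that when $G(z,\lambda)=g(z,\lambda)-\Theta$ with $g$ continuously differentiable, the mapping $G$ has the Aubin property at every point of its graph (its graph is $\{((z,\lambda),w)\mid g(z,\lambda)-w\in\Theta\}$, which is the preimage of $\Theta$ under a submersion in the $w$-component), hence $D^*G((\bar z,\lambda),0)(0)=\{0\}$ in the relevant sense, so \eqref{eq:metric_regularity_CQ} holds trivially. By the discussion above the proposition, \eqref{eq:metric_regularity_CQ} is precisely the Mordukhovich criterion guaranteeing metric regularity — hence subregularity — of $\widehat{\mathcal H}$ at $((\bar z,0,\lambda),(0,0))$ for every $\lambda\in K(\bar z)$. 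Thus \cref{prop:from_implicit_to_explicit_M_stationarity_1} applies and yields explicit M-stationarity. It then remains to unpack \cref{def:Mst_explicit_variable}(iii): using $D^*G((\bar z,\bar\lambda),0)(\nu)=\nabla g(\bar z,\bar\lambda)^\top\nu=(\nabla_z g(\bar z,\bar\lambda)^\top\nu,\nabla_\lambda g(\bar z,\bar\lambda)^\top\nu)$ whenever $\nu\in\mathcal N_\Theta(g(\bar z,\bar\lambda))$ and $=\varnothing$ otherwise — this is exactly \cref{lem:equality_coderivative_products}(a) in the product setting, or can be read off directly from the differentiability of $g$ and the definition of the limiting normal cone to $\gph G$ — one sees that the set $\{\xi\mid(\xi,\mu)\in D^*G((\bar z,\bar\lambda),0)(\nu)\}$ is nonempty exactly when $\mu=\nabla_\lambda g(\bar z,\bar\lambda)^\top\nu$ and $\nu\in\mathcal N_\Theta(g(\bar z,\bar\lambda))$, in which case it equals $\{\nabla_z g(\bar z,\bar\lambda)^\top\nu\}$. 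Substituting back gives the displayed inclusion with $\mu=\nabla_\lambda g(\bar z,\bar\lambda)^\top\nu$.

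For part (b), the metric subregularity of $\widehat{\mathcal H}$ is now imposed as a hypothesis, so \cref{prop:from_implicit_to_explicit_M_stationarity_1} applies immediately (again using that \eqref{eq:coder_CalH} holds with equality here by \cref{lem:equality_coderivative_products}(b)), and $\bar z$ is explicitly M-stationary. The final step is once more to translate \cref{def:Mst_explicit_variable}(iii): since $G(z,\lambda)=\widetilde G(\lambda)+\tilde g(z)$ with $\tilde g$ continuously differentiable, \cref{lem:equality_coderivative_products}(a) (with the roles of the variables arranged so that $z$ enters only through the smooth summand $\tilde g$) gives $D^*G((\bar z,\bar\lambda),0)(\nu)=(\nabla\tilde g(\bar z)^\top\nu,\;D^*\widetilde G(\bar\lambda,-\tilde g(\bar z))(\nu))$. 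Hence $\{\xi\mid(\xi,\mu)\in D^*G((\bar z,\bar\lambda),0)(\nu)\}=\{\nabla\tilde g(\bar z)^\top\nu\}$ provided $\mu\in D^*\widetilde G(\bar\lambda,-\tilde g(\bar z))(\nu)$, and combining the term $D^*F(\bar z,\bar\lambda)(\mu)$ with this constraint on $\mu$ produces the composition $(D^*F(\bar z,\bar\lambda)\circ D^*\widetilde G(\bar\lambda,-\tilde g(\bar z)))(\nu)$. Plugging into \cref{def:Mst_explicit_variable}(iii) yields the claimed formula.

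I expect the only genuinely delicate point to be bookkeeping: making sure that the way $z$ and $\lambda$ are split matches the hypotheses of \cref{lem:equality_coderivative_products}(a)/(b) correctly in each case (in (i) the whole pair $(z,\lambda)$ plays the role of the "base" variable and $\Theta$ the set, whereas in (ii) one must isolate $\tilde g(z)$ as the smooth perturbation and $\widetilde G(\lambda)$ as the set-valued part), and verifying that metric subregularity of $\widehat{\mathcal H}$ is automatic in case (i) — that is, that $G$ really does have the Aubin property everywhere, which follows from the continuous differentiability of $g$ since $\gph G$ is then a smooth preimage of $\Theta$ and the Mordukhovich criterion \eqref{eq:metric_regularity_CQ} reduces to a triviality. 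Everything else is routine substitution into \cref{def:Mst_explicit_variable}(iii).
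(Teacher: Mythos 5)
Your argument is correct and follows essentially the same route as the paper, which establishes this proposition by summarizing exactly these points in the preceding paragraphs: equality in \eqref{eq:coder_CalH} via \cref{lem:equality_coderivative_products}, the Aubin property of $G$ in setting (i) rendering \eqref{eq:metric_regularity_CQ} (and hence metric regularity of $\widehat{\mathcal H}$) automatic, metric subregularity of $\widehat{\mathcal H}$ taken as a hypothesis in setting (ii), and then an application of \cref{prop:from_implicit_to_explicit_M_stationarity_1} followed by unpacking the coderivative of $G$ in each setting. Your bookkeeping of the coderivative formulas matches the displayed conditions, so there is nothing to add.
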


The above proposition shows that in the setting (i)
from \cref{ass:special_structure},
one only has to impose metric subregularity of $H_M$ at some reference point $(\bar z,(0,0))$,
where $\bar z$ is a local minimizer of \eqref{eq:implicit_multipliers},
and some inner semicompactness of $\widehat K$ in order to come up
with fully explicit optimality conditions in terms of initial problem data. In
the setting (ii), an additional metric subregularity requirement on $\widehat{\mathcal H}$ is
necessary for that purpose.
As we will see, inner semicompactness of $\widehat K$ is inherent under reasonable assumptions
in the context of the problem settings from \cref{sec:examples}.
Thus, one might be tempted to say that in any of the settings from \cref{ass:special_structure},
an implicitly M-stationary point of \eqref{eq:implicit_multipliers} is likely to be
explicitly M-stationary w.r.t.\ at least one choice of the implicit variable. 

\subsection{Convexity and sufficient optimality conditions}\label{sec:sufficient_conditions}

We want to close our theoretical analysis of the abstract model \eqref{eq:implicit_multipliers}
with a brief look at sufficient optimality conditions in the presence of convexity.
More precisely, we investigate the problem of interest under the subsequently
stated standing assumption.
\begin{assumption}\label{ass:convexity}
	Let $f$ be a convex function, let $F$ and $G$ be convex set-valued mappings,
	and let $M$ be a convex set.
\end{assumption}

As we already mentioned in \cref{cor:convex_programs}, \cref{ass:convexity} ensures
that $H$ is convex as well. Observing that the graphs of $F$, $G$, and $H$ are
convex, their coderivatives are
given via the normal cone in the sense of convex analysis.
That is why we obtain the following result.
\begin{theorem}\label{thm:sufficient_optimality_conditions}\
	Let $\bar z\in\R^n$ be an implicitly, fuzzily, or explicitly M-stationary
	point of \eqref{eq:implicit_multipliers}.
	Then $\bar z$ is a minimizer of \eqref{eq:implicit_multipliers}.
\end{theorem}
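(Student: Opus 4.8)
The plan is to handle the three stationarity notions simultaneously, exploiting that under \cref{ass:convexity} all the mappings in sight are convex, so that every limiting construction collapses to its convex-analytic counterpart. First I would record the convexity facts: $H$ is convex by \cref{cor:convex_programs}, and $\gph\mathcal H$ is convex as well, being the intersection of the convex sets obtained by pulling $\gph F$ and $\gph G$ back along the linear maps $((z,\lambda),(a,b))\mapsto(z,a+\lambda)$ and $((z,\lambda),(a,b))\mapsto(z,\lambda,b)$. Hence, for any of the convex mappings $\Upsilon\in\{H,\mathcal H,F,G\}$ and any $(\bar z,\bar w)\in\gph\Upsilon$, one has the characterization that $\zeta\in D^*\Upsilon(\bar z,\bar w)(\eta)$ holds if and only if $\zeta^\top(z-\bar z)\le\eta^\top(w-\bar w)$ for all $(z,w)\in\gph\Upsilon$; similarly, $\xi\in\partial f(\bar z)$ gives $f(z)\ge f(\bar z)+\xi^\top(z-\bar z)$ for all $z\in\R^n$, and $\xi\in\mathcal N_M(\bar z)$ gives $\xi^\top(z-\bar z)\le 0$ for all $z\in M$.

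Next I would fix an arbitrary feasible point $z\in Z$ and produce, from its feasibility, a point of the relevant graph whose image coordinates vanish. In the implicit case this is simply $(z,0)\in\gph H$. In the fuzzy and explicit cases, pick $\lambda\in K(z)$; then $((z,\lambda),(0,0))\in\gph\mathcal H$ for the fuzzy case, while $(z,\lambda)\in\gph F$ and $((z,\lambda),0)\in\gph G$ for the explicit case. Substituting these into the coderivative inequalities above makes all terms containing the multipliers $\nu$ (and $\mu$) disappear, leaving only an inequality involving the $\R^n$-parts of the coderivative elements. For the implicit case one directly obtains $\zeta^\top(z-\bar z)\le 0$ where $\zeta\in D^*H(\bar z,0)(\nu)$. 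For the fuzzy case one first uses the $\R^m$-component of the stationarity inclusion to see that the $\R^m$-part of the coderivative element of $\mathcal H$ must vanish, which lets one discard the $\lambda$-dependent term and again obtain $\zeta^\top(z-\bar z)\le 0$ for the $\R^n$-part $\zeta$. For the explicit case one adds the inequality coming from $\gph F$ to the one coming from $\gph G$; the $\mu^\top(\lambda-\bar\lambda)$ terms cancel and one is left with $(\zeta_F+\zeta_G)^\top(z-\bar z)\le 0$.

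Finally I would assemble everything. Writing the respective M-stationarity condition in the form $\xi_1+\zeta+\xi_3=0$, where $\xi_1\in\partial f(\bar z)$, $\xi_3\in\mathcal N_M(\bar z)$, and $\zeta$ is the coderivative contribution (equal to $\zeta_F+\zeta_G$ in the explicit case), and using $z\in M$ together with the previous step, one gets
\[
	f(z)-f(\bar z)\ \ge\ \xi_1^\top(z-\bar z)\ =\ -(\zeta+\xi_3)^\top(z-\bar z)\ \ge\ 0 .
\]
Since $z\in Z$ was arbitrary, $\bar z$ is in fact a global minimizer of \eqref{eq:implicit_multipliers}, which is even stronger than what is claimed. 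I do not expect a genuine obstacle here; the only spots needing care are the precise verification that $\gph\mathcal H$ is convex (and closed) and, in the fuzzy case, the bookkeeping that forces the $\R^m$-part of the $\mathcal H$-coderivative element to vanish — this is exactly what is needed to eliminate the term in $\lambda-\bar\lambda$, over which we otherwise have no control since $\lambda$ ranges over $K(z)$.
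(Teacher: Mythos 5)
Your proposal is correct and follows essentially the same route as the paper's proof: convexity turns every coderivative and subdifferential into its convex-analytic counterpart, and summing the resulting linear inequalities at an arbitrary feasible point (with the image coordinates of the graphs set to zero) yields $f(z)\ge f(\bar z)$. The only difference is one of presentation --- the paper writes out the explicit case and declares the other two ``similar,'' whereas you supply the bookkeeping for the implicit and fuzzy cases as well (correctly noting that the $\R^m$-component of the stationarity system forces the $\lambda$-part of the $\mathcal H$-coderivative element to vanish), and you conclude global optimality of $\bar z$ directly rather than passing through \eqref{eq:explicit_multipliers} and \cref{thm:global_relationship}.
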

\begin{proof}
	We only show the statement for explicitly M-stationary points.
	The remaining assertions can be derived similarly.\\
	Let $\bar z$ be explicitly M-stationary. Then we find $\bar\lambda\in K(\bar z)$,
	$\mu\in\R^m$, and $\nu\in\R^s$ as well as $\xi_1\in\partial f(\bar z)$,
	$\xi_2\in D^*F(\bar z,\bar\lambda)(\mu)$, $(\xi_3,\mu)\in D^*G((\bar z,\bar\lambda),0)(\nu)$,
	and $\xi_4\in\mathcal N_M(\bar z)$ with $0=\xi_1+\xi_2+\xi_3+\xi_4$.
	For each $(z,\lambda)\in\tilde Z$, we obtain
	\begin{align*}
		f(z)
		&
		\geq
		f(\bar z)+\xi_1^\top(z-\bar z)
		=
		f(\bar z)-\xi_2^\top(z-\bar z)-\xi_3^\top(z-\bar z)-\xi_4^\top(z-\bar z)\\
		&
		\geq
		f(\bar z)-\xi_2^\top(z-\bar z)-(-\mu)^\top(\lambda-\bar\lambda)
			-\xi_3^\top(z-\bar z)-\mu^\top(\lambda-\bar\lambda)-(-\nu)^\top(0-0)
			\\
		&
		\geq
		f(\bar z)
	\end{align*}
	by definition of the subdifferential and the normal cone in the sense of
	convex analysis. This shows that $(\bar z,\bar\lambda)$ is a
	global minimizer of \eqref{eq:explicit_multipliers}, i.e., $\bar z$ is a global
	minimizer of \eqref{eq:implicit_multipliers} due to \cref{thm:global_relationship}.
\end{proof}

\section{Consequences for certain problem classes}\label{sec:applications}

In this section, we discuss some of the results obtained in \cref{sec:lambda_as_variable}
by means of the example problems introduced in \cref{sec:examples}.

\subsection{Bilevel programming}\label{sec:app_bilevel_programming}

In this section, we take a look back at the bilevel programming problem  \eqref{eq:BPP_implicit_multipliers} from 
\cref{sec:bilevel_programming}. Throughout the section, the following additional 
standing assumption may hold.
\begin{assumption}\label{ass:lower_level_regularity_via_MSCQ}
The lower level feasibility mapping 
$y\tto g(y)-C$ is metrically subregular at all points $(y,0)$ 
belonging to its graph.
\end{assumption}

Clearly, the requirements from \cref{ass:lower_level_regularity_via_MSCQ} are inherent 
whenever a Robinson-type constraint qualification holds at
all lower level feasible points. In case $C:=\R^m_-$, this amounts to validity
of MFCQ at all points from $\Gamma$.
However, in many situations, the metric subregularity assumption 
from \cref{ass:lower_level_regularity_via_MSCQ} might be weaker.

Consulting \cref{sec:abstract_M_St} and \cite[Theorem~6.14]{RockafellarWets1998},
we find that the pre-image formula
\[
	\widehat{\mathcal N}_\Gamma(y)
	=
	\nabla g(y)^\top\widehat{\mathcal N}_C(g(y))
	=
	\nabla g(y)^\top\bigl[C^\circ\cap g(y)^\perp\bigr]
\]
holds for all $y\in\Gamma$.
Thus, the associated intermediate mapping $K\colon\R^{n_1}\times\R^{n_2}\tto\R^m$ is given by
\begin{align*}
	K(x,y)
	=
	\bigl\{
		\lambda\in C^\circ\,\big|\,
		\nabla_yj(x,y)+\nabla g(y)^\top\lambda=0,\,\lambda^\top g(y)=0
	\bigr\}
\end{align*}
which is nothing else but the so-called Lagrange multiplier mapping 
associated with \eqref{eq:lower_level}.
Keeping \cref{prop:inner_semicompactness_of_multiplier_map} and the continuity of
$\nabla_yj(\cdot,\cdot)$ in mind, $K$ is inner semicompact w.r.t.\ its domain
everywhere.
Thus, our \cref{thm:global_relationship,thm:local_relationship} recover the
results obtained in \cite{DempeDutta2012,DempeMefoMehlitz2018} under validity of
weaker constraint qualifications at the lower level. Indeed, we do not exploit the
local boundedness of the intermediate mapping $K$, which was the key idea in the
latter papers, but only its inner semicompactness. For that purpose, metric
subregularity of the feasibility mapping is enough.
Let us inspect the assertion of \cref{cor:local_relationship} in the light of the
present setting. Therefore, we fix a point $((\bar x,\bar y),\bar\lambda)\in\gph K$.
Relying on \cite[Lemma~4.44, Proposition~4.47]{BonnansShapiro2000} and observing that
$C$ is a cone, we obtain that
$K$ is inner semicontinuous at $((\bar x,\bar y),\bar\lambda)$ if the condition
\begin{equation}\label{eq:MFC}
	\nabla g(\bar y)^\top\lambda=0,\,\lambda^\top g(\bar y)=0,\,\lambda\in (C\cap\bar\lambda^\perp)^\circ
	\quad\Longrightarrow\quad
	\lambda=0
\end{equation}
is valid. Actually, this already implies $K(\bar x,\bar y)=\{\bar\lambda\}$ and that $K$
is so-called locally upper Lipschitz continuous at $(\bar x,\bar y)$, 
see \cite[Section~2.3]{BonnansShapiro2000} for a definition.
In general, \eqref{eq:MFC} is more restrictive than validity of GMFCQ at $\bar y$.
Noting that \eqref{eq:MFC} depends on the multiplier $\bar\lambda$ and, thus, implicitly
on the objective function $j$, this condition is not a constraint qualification in the 
narrower sense. In the setting of standard nonlinear programming, i.e., where
$C:=\R_-^m$ holds, \eqref{eq:MFC} is referred to as the 
\emph{strict Mangasarian--Fromovitz condition} which is known to be the weakest condition
implying uniqueness of Lagrange multipliers, see \cite{Wachsmuth2013}.
In the more general setting where $C$ is a polyhedral, convex cone, 
\eqref{eq:MFC} is implied by the so-called
\emph{non-degeneracy} condition
\begin{align*}
	\nabla g(\bar y)^\top\lambda=0,\,
	\lambda\in\bigl(C^\circ\cap g(\bar y)^\perp-C^\circ\cap g(\bar y)^\perp\bigr)
	\quad\Longrightarrow\quad
	\lambda=0
\end{align*}
which corresponds to the prominent \emph{linear independence constraint qualification} (LICQ)
in standard nonlinear programming. 
It can be distilled from \cite[Theorem~3.2]{DempeDutta2012}
that a bilevel optimization problem and its reformulated single-level counterpart, where lower level
multipliers are treated as explicit variables, are equivalent w.r.t.\ local minimizers 
whenever LICQ is valid at each point which is feasible for the lower level problem.
In this regard, the above observations provide another generalization of the results from
\cite{DempeDutta2012}.
Let us also mention that if $C$ is polyhedral,
we can actually obtain stronger properties of $K$ which correspond to inner semicontinuity and inner semicompactness but with linear rate of change (called inner calmness and inner calmness*, respectively, in \cite{Benko2019,BenkoMehlitz2020}, where more details can be found).

Recalling the setting from \eqref{eq:setting_bilevel_programming}, we have
$H(z):=\nabla_yj(z)+\widehat{\mathcal N}_\Gamma(y)$ for all $z:=(x,y)$.
In this regard, the implicit M-stationarity conditions of \eqref{eq:BPP_implicit_multipliers}
at some feasible point $\bar z:=(\bar x,\bar y)$
reduce to the existence of $\nu\in\R^{n_2}$ such that
\[
	(0,0)\in\partial f(\bar z)
		+\nabla^2_{yz}j(\bar z)^\top\nu
		+ \{0\}\times D^*\widehat{\mathcal N}_\Gamma(\bar y,-\nabla_yj(\bar z))(\nu)
		+\mathcal N_S(\bar x)\times\{0\}.
\]
Some recent progress in the field of variational analysis even allows to
calculate or estimate the appearing coderivative of the normal cone mapping 
$\widehat{\mathcal N}_\Gamma\colon\R^{n_2}\tto\R^{n_2}$
in terms of initial data under suitable assumptions, see e.g.\
\cite{GfrererOutrata2016,GfrererOutrata2016c,GfrOut17}.
On the other hand, the explicit M-stationarity conditions of \eqref{eq:BPP_implicit_multipliers}
reduce to the existence of $\bar\lambda\in K(\bar z)$ and $\nu\in\R^{n_2}$ which satisfy
\begin{align*}
	(0,0)&\in
		\partial f(\bar z)
		+\nabla^2_{yz}j(\bar z)^\top\nu
		+\bigl\{\bigl(
			0,\mathsmaller\sum\nolimits_{i=1}^m\bar\lambda_i\nabla^2g_i(\bar y)\nu
			\bigr)\bigr\}\\
		&\qquad
		+\{0\}\times 
			D^*(\widehat{\mathcal N}_C\circ g)(\bar y,\bar\lambda)(\nabla g(\bar y)\nu)
		+\mathcal N_S(\bar x)\times\{0\},
\end{align*}
see statement (a) of \cref{prop:explicit_M_stationarity_in_particular_settings}.
Above, $g_1,\ldots,g_m\colon\R^{n_2}\to\R$ denote the component functions of $g$.
Naturally, this stationarity system might be deduced directly from the above system of
implicit M-stationarity with the aid of the chain rule from \cref{lem:chain_rule}.
Consulting \cref{prop:inner_semicompactness_of_multiplier_map} once more, however, we
see that the intermediate mapping $\widehat K$ from \eqref{eq:simple_intermediate_map},
which takes the form
\[
	\widehat{K}((x,y),w)
	=
	\bigl\{
		\lambda\in C^\circ\,\big|\,
		-w+\nabla_yj(x,y)+\nabla g(y)^\top\lambda=0,\,\lambda^\top g(y)=0
	\bigr\},
\]
is also inner semicompact w.r.t.\ its domain at all points $((\bar x,\bar y),0)$
that satisfy the relation $(\bar x,\bar y)\in\dom K$. In this regard, statement (a) of
\cref{prop:explicit_M_stationarity_in_particular_settings} guarantees that each
implicitly M-stationary point of \eqref{eq:BPP_implicit_multipliers} is automatically
explicitly M-stationary as well.
This observation generalizes the classical results from \cite{MordukhovichOutrata2007}.

Note that the coderivative of $\widehat{\mathcal N}_C\circ g$ might be computed or estimated using
again a suitable chain rule while observing that the associated intermediate mapping from
\eqref{eq:intermediate_mapping} is naturally inner semicontinuous w.r.t.\ its domain 
at each point of its graph by continuity of $g$. 
This way, one might be tempted to call the stationarity system
\begin{align*}
	(0,0)&\in
		\partial f(\bar z)
		+\nabla^2_{yz}j(\bar z)^\top\nu
		+\bigl\{\bigl(
			0,\mathsmaller\sum\nolimits_{i=1}^m\bar\lambda_i\nabla^2g_i(\bar y)\nu
			\bigr)\bigr\}\\
		&\qquad
		+\{0\}\times \nabla g(\bar y)^\top 
			D^*\widehat{\mathcal N}_C(g(\bar y),\bar\lambda)(\nabla g(\bar y)\nu)
		+\mathcal N_S(\bar x)\times\{0\}
\end{align*}
the \emph{fully} explicit M-stationarity system of \eqref{eq:BPP_implicit_multipliers}
(w.r.t.\ $\bar\lambda$).
We want to point out that this system can be written equivalently as
\begin{align*}
	&(0,0)\in
		\partial f(\bar z)
		+\nabla^2_{yz}j(\bar z)^\top\nu
		+\bigl\{\bigl(
			0,\mathsmaller\sum\nolimits_{i=1}^m\bar\lambda_i\nabla^2g_i(\bar y)\nu
				+\nabla g(\bar y)^\top \mu
			\bigr)\bigr\}
		+\mathcal N_S(\bar x)\times\{0\}\\
	&(\mu,-\nabla g(\bar y)\nu)\in\mathcal N_{\gph{\widehat{\mathcal N}_C}}(g(\bar y),\bar\lambda),
\end{align*}
corresponding to the M-stationarity system of the optimization problem 
\begin{equation}\label{eq:MPCC_reformulation_of_BPP}
	\begin{aligned}
	f(x,y)&\,\to\,\min\limits_{x,y,\lambda}\\
	x&\,\in\,S\\
	\nabla_yj(x,y)+\nabla g(y)^\top\lambda&\,=\,0\\
	(g(y),\lambda)&\,\in\,\gph\widehat{\mathcal N}_C.
	\end{aligned}
\end{equation}
Observing that $C$ is a convex cone, we have
\[
	\gph\widehat{\mathcal N}_C
	=
	\bigl\{(a,b)\in C\times C^\circ\,\big|\,a^\top b=0\bigr\},
\]
and this motivates us to call \eqref{eq:MPCC_reformulation_of_BPP} a generalized
\emph{mathematical problem with complementarity constraints}, see \cite[Section~4]{FrankeMehlitzPilecka2018} or \cite{Wachsmuth2015}. Following the above arguments and
keeping \cref{sec:abstract_M_St} in mind, we obtain the following result.
\begin{corollary}\label{cor:fully_explicit_M_St_for_BPP}
	Let $\bar z:=(\bar x,\bar y)\in\R^{n_1}\times\R^{n_2}$ be a local minimizer
	of \eqref{eq:BPP_implicit_multipliers} and assume that, for some $\bar\lambda\in K(\bar z)$,
	the mapping
	\[
		(x,y,\lambda)
		\tto
		\left(	x-S,
				\nabla _yj(x,y)+\nabla g(y)^\top\lambda,
				(g(y),\lambda)-\gph\widehat{\mathcal N}_C
		\right)
	\]
	is metrically subregular at $((\bar x,\bar y,\bar\lambda),(0,0,(0,0)))$.
	Then $\bar z$ is fully explicitly M-stationary for \eqref{eq:BPP_implicit_multipliers}
	w.r.t.\ $\bar\lambda$.
\end{corollary}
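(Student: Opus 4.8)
The plan is to read off that the surrogate problem \eqref{eq:MPCC_reformulation_of_BPP} is nothing but the instance of \eqref{eq:explicit_multipliers} belonging to the bilevel data \eqref{eq:setting_bilevel_programming}: writing $z=(x,y)$, the constraints $\lambda\in F(z)=\widehat{\mathcal N}_C(g(y))$, $0\in G(z,\lambda)=\nabla_yj(x,y)+\nabla g(y)^\top\lambda$, and $z\in M=S\times\R^{n_2}$ are equivalent to $x\in S$, $\nabla_yj(x,y)+\nabla g(y)^\top\lambda=0$, and $(g(y),\lambda)\in\gph\widehat{\mathcal N}_C$, i.e.\ to feasibility in \eqref{eq:MPCC_reformulation_of_BPP}. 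Since $\bar z$ is a local minimizer of \eqref{eq:BPP_implicit_multipliers}$\,=\,$\eqref{eq:implicit_multipliers}, assertion (a) of \cref{thm:local_relationship} guarantees that $(\bar x,\bar y,\bar\lambda)$ is a local minimizer of \eqref{eq:MPCC_reformulation_of_BPP} for every $\bar\lambda\in K(\bar z)$, in particular for the one fixed in the hypothesis.

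Next I would write the feasible set of \eqref{eq:MPCC_reformulation_of_BPP} as the pre-image $\Phi^{-1}(\Omega)$ of the closed set $\Omega:=S\times\{0\}\times\gph\widehat{\mathcal N}_C$ — closed because $S$ is closed and $\gph\widehat{\mathcal N}_C$ is closed by closedness and convexity of $C$ — under the continuously differentiable mapping $\Phi(x,y,\lambda):=(x,\nabla_yj(x,y)+\nabla g(y)^\top\lambda,g(y),\lambda)$, whose smoothness is inherited from the twice continuous differentiability of $j$ and $g$. With this notation the mapping from the corollary is precisely the feasibility mapping $\Xi=\Phi-\Omega$, and its hypothesised metric subregularity at $((\bar x,\bar y,\bar\lambda),(0,0,(0,0)))$ activates the pre-image calculus rule of \cref{Pro:MSRMain}, see also \cref{sec:abstract_M_St}. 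Combining this with the generalized Fermat rule \cite[Theorem~6.1]{Mordukhovich2018} applied at the local minimizer $(\bar x,\bar y,\bar\lambda)$ — observing that the objective does not depend on $\lambda$, so its limiting subdifferential there equals $\partial f(\bar z)\times\{0\}$ — yields $0\in\partial f(\bar z)\times\{0\}+\nabla\Phi(\bar x,\bar y,\bar\lambda)^\top\mathcal N_\Omega(\Phi(\bar x,\bar y,\bar\lambda))$.

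It then remains to unpack this inclusion. First I would decompose $\mathcal N_\Omega(\Phi(\bar x,\bar y,\bar\lambda))=\mathcal N_S(\bar x)\times\R^{n_2}\times\mathcal N_{\gph\widehat{\mathcal N}_C}(g(\bar y),\bar\lambda)$ via the product rule for limiting normals \cite[Proposition~6.41]{RockafellarWets1998}, and then apply $\nabla\Phi(\bar x,\bar y,\bar\lambda)^\top$ to a generic element $(\xi_S,\nu,\mu,\beta)$ of this product. Using symmetry of second-order derivatives and writing $\nabla^2_{yz}j(\bar z)$ for the Jacobian of $\nabla_yj$ with respect to $z=(x,y)$, the $\lambda$-block of the resulting identity reads $0=\nabla g(\bar y)\nu+\beta$, hence $\beta=-\nabla g(\bar y)\nu$, while the $(x,y)$-block gives precisely $0\in\partial f(\bar z)+\nabla^2_{yz}j(\bar z)^\top\nu+\{(0,\sum_{i=1}^m\bar\lambda_i\nabla^2g_i(\bar y)\nu+\nabla g(\bar y)^\top\mu)\}+\mathcal N_S(\bar x)\times\{0\}$, which together with $(\mu,-\nabla g(\bar y)\nu)\in\mathcal N_{\gph\widehat{\mathcal N}_C}(g(\bar y),\bar\lambda)$ is exactly the fully explicit M-stationarity system of \eqref{eq:BPP_implicit_multipliers} w.r.t.\ $\bar\lambda$. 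The only genuinely delicate point is this last computation — correctly transposing $\nabla\Phi$, in particular producing the term $\sum_{i=1}^m\bar\lambda_i\nabla^2g_i(\bar y)$ from differentiating $\nabla g(y)^\top\lambda$ in $y$, and assigning the two blocks of the normal cone to $\gph\widehat{\mathcal N}_C$ to the multipliers $\mu$ and $-\nabla g(\bar y)\nu$; the remaining steps are straightforward specializations of results already established earlier in the paper.
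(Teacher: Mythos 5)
Your proposal is correct and follows essentially the route the paper intends: identify \eqref{eq:MPCC_reformulation_of_BPP} with the explicit reformulation \eqref{eq:explicit_multipliers} of the bilevel data, pass to a local minimizer $(\bar x,\bar y,\bar\lambda)$ via assertion (a) of \cref{thm:local_relationship}, and then invoke the pre-image rule from \cref{Pro:MSRMain} together with the generalized Fermat rule for the feasibility mapping $\Phi-\Omega$ under the assumed metric subregularity, before unpacking $\nabla\Phi(\bar x,\bar y,\bar\lambda)^\top\mathcal N_\Omega(\Phi(\bar x,\bar y,\bar\lambda))$ blockwise. Your block computation of the adjoint Jacobian, including the term $\sum_{i=1}^m\bar\lambda_i\nabla^2g_i(\bar y)\nu$ and the identification $\beta=-\nabla g(\bar y)\nu$ from the $\lambda$-block, reproduces exactly the fully explicit M-stationarity system stated in the paper.
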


Using the above terminology, we would like to mention that the authors in
\cite{AdamHenrionOutrata2018} discussed the implicit and fully explicit M-stationarity
conditions of \eqref{eq:BPP_implicit_multipliers} in case $C:=\R^m_-$. They came up
with refined conditions ensuring that the feasibility mapping from \cref{cor:fully_explicit_M_St_for_BPP}
is indeed metrically subregular at some reference point, see \cite[Theorem~8]{AdamHenrionOutrata2018}.
From the viewpoint of applicability, the system of fully explicit M-stationarity might be 
the most useful one among the stated ones since the appearing coderivative 
of the normal cone mapping $\widehat{\mathcal N}_C$
is computable in some situations where the set $C$ is \emph{simple},
see e.g.\ the proof of \cite[Theorem 2]{DoRo96}
and \cite[Theorem 2.12]{GfrererOutrata2016b}.
Let us mention that yet another M-stationarity-type system associated with 
\eqref{eq:BPP_implicit_multipliers} has been derived in the recent paper \cite{GfrererYe2019}.

\subsection{Evaluating weakly efficient points in multicriteria optimization}
	\label{sec:app_multicriteria_optimization}
	
Let us consider \eqref{eq:EMOP_implicit_multipliers} under the assumptions
from \cref{sec:vector_optimization}. Particularly, we consider problem
\eqref{eq:implicit_multipliers} with the setting from 
\eqref{eq:setting_multiobjective_optimization}.
The associated intermediate mapping $K\colon\R^n\tto\R^m$ reads as
\[
	\forall z\in\R^n\colon\quad
	K(z)=\{\lambda\in \Delta\,|\,z\in\Psi(\lambda)\}.
\]
The subsequent lemma provides essential foundations of our analysis.
\begin{lemma}\label{lem:isc_EMOP}
	The mapping $G$ from \eqref{eq:setting_multiobjective_optimization}
	possesses a closed graph while $K$ from above is inner semicompact w.r.t.\ its domain
	everywhere.
\end{lemma}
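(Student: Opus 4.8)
The plan is to establish the two assertions in turn, and the only one requiring an argument is the closedness of $\gph G$. Since $G(z,\lambda)=\Psi(\lambda)-z$, we have
\[
	\gph G=\{(z,\lambda,w)\in\R^n\times\R^m\times\R^n\mid w+z\in\Psi(\lambda)\},
\]
so $\gph G$ is the preimage of $\gph\Psi$ under the continuous (indeed linear) map $(z,\lambda,w)\mapsto(\lambda,w+z)$. Hence it suffices to show that $\gph\Psi\subset\R^m\times\R^n$ is closed. First I would pick a sequence $\{(\lambda_k,z_k)\}_{k\in\N}\subset\gph\Psi$ converging to some $(\bar\lambda,\bar z)$. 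By definition of $\Psi$ we have $\lambda_k\in\Delta$ and $z_k\in\Gamma$ with $\lambda_k^\top j(z_k)\le\lambda_k^\top j(z)$ for all $z\in\Gamma$. Closedness of $\Delta$ and $\Gamma$ gives $\bar\lambda\in\Delta$ and $\bar z\in\Gamma$, and continuity of $j$ lets me pass to the limit in the optimality inequality, yielding $\bar\lambda^\top j(\bar z)\le\bar\lambda^\top j(z)$ for each $z\in\Gamma$. Thus $\bar z\in\Psi(\bar\lambda)$, so $\gph\Psi$ — and therefore $\gph G$ — is closed.

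For the inner semicompactness of $K$, I would simply invoke \cref{lem:inner_semicompactness_of_K}~(b): the mapping $F\equiv\Delta$ is constant with bounded image, hence locally bounded at every point, so $K$ is inner semicompact w.r.t.\ $\dom K$ at each $\bar z\in\dom K$. Equivalently, one can argue by hand: given $\bar z\in\dom K$ and a sequence $\{z_k\}_{k\in\N}\subset\dom K$ with $z_k\to\bar z$, one chooses $\lambda_k\in K(z_k)\subset\Delta$, and compactness of $\Delta$ furnishes a convergent subsequence of $\{\lambda_k\}_{k\in\N}$, which is exactly what the definition demands.

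I do not expect a real obstacle here. The only subtlety worth keeping in mind is that $\Psi(\lambda)$ may be empty or unbounded for certain $\lambda$, but this plays no role in the closed-graph argument, which only manipulates convergent sequences lying inside $\gph\Psi$; the remaining claims are immediate from continuity of the problem data and compactness of the simplex.
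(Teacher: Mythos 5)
Your proposal is correct and follows essentially the same route as the paper: the core step in both is passing to the limit in the scalarized optimality inequality $\lambda_k^\top j(\cdot)\le\lambda_k^\top j(z')$ using continuity of $j$ and closedness of $\Gamma$ and $\Delta$, and the inner semicompactness of $K$ is obtained in both cases from local boundedness of $F\equiv\Delta$ via \cref{lem:inner_semicompactness_of_K}. The only cosmetic difference is that you first reduce closedness of $\gph G$ to closedness of $\gph\Psi$ via a continuous preimage, whereas the paper argues directly with sequences in $\gph G$; the substance is identical.
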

\begin{proof}
	First, we show that $G$ possesses a closed graph.
	Choose sequences $\{z_k\}_{k\in\N}\subset\R^n$, $\{\lambda_k\}_{k\in\N}\subset\R^m$,
	and $\{w_k\}_{k\in\N}\subset\R^n$ such that $z_k\to z$, $\lambda_k\to\lambda$, and
	$w_k\to w$ for some $z\in\R^n$, $\lambda\in\R^m$, and $w\in\R^n$ are valid 
	while $w_k\in G(z_k,\lambda_k)$ holds for all $k\in\N$. 
	Particularly, we find $\{\lambda_k\}_{k\in\N}\subset\Delta$ and
	$z_k+w_k\in\Psi(\lambda_k)$, i.e.,
	\[
		\forall k\in\N,\,\forall z'\in\Gamma\colon\quad
			\lambda_k^\top j(z_k+w_k)\leq \lambda_k^\top j(z')
	\]
	and $z_k+w_k\in\Gamma$ for all $k\in\N$. Taking the limit $k\to\infty$ and observing
	that $j$ is continuous while $\Gamma$ and $\Delta$ are closed, we infer $z+w\in\Gamma$ and
	$\lambda^\top j(z+w)\leq \lambda^\top j(z')$ for all $z'\in\Gamma$.
	This yields $z+w\in\Psi(\lambda)$ which equals $w\in G(z,\lambda)$, i.e.,
	$((z,\lambda),w)\in\gph G$. Particularly, $\gph G$ is closed.
	
	By definition, $F$ is locally bounded everywhere which is why the lemma's
	assertion regarding $K$ follows directly from \cref{lem:inner_semicompactness_of_K}.
\end{proof}

Now, \cref{thm:global_relationship,thm:local_relationship} can be used to 
infer the precise relationship between \eqref{eq:EMOP_implicit_multipliers}
and its associated counterpart \eqref{eq:explicit_multipliers} w.r.t.\
global and local minimizers, respectively.
Related results can be obtained in the slightly more general
context of \emph{semivectorial} bilevel programming where the lower level
decision maker has to solve a multiobjective optimization problem, see \cite{DempeMehlitz2018}.

Let us now focus on optimality conditions for \eqref{eq:EMOP_implicit_multipliers}.
Clearly, the associated implicit M-stationarity conditions take the form
\[
	0\in\partial f(\bar z)+\mathcal N_{\Gamma_{\textup{we}}}(\bar z)
\]
for an arbitrary feasible point $\bar z\in\R^n$. However, these conditions are
of limited practical use due to an essential lack of knowledge regarding the 
variational geometry of $\Gamma_\textup{we}$.
On the other hand,
invoking \cref{prop:explicit_M_stationarity_in_particular_settings},
we find that the explicit M-stationarity conditions of \eqref{eq:EMOP_implicit_multipliers}
take the following form: there exist $\bar\lambda\in K(\bar z)$ and $\nu\in\R^n$ such that
\[
	\nu\in\partial f(\bar z),
	\qquad
	0\in D^*\Psi(\bar\lambda,\bar z)(\nu)+\mathcal N_\Delta(\bar\lambda).
\]
Taking into account that 
\[
	\forall\lambda\in\Delta\colon\quad
	\Psi(\lambda)
	=
	\left\{
		z\in\R^n\,\middle|\,0\in\nabla j(z)^\top\lambda+\widehat{\mathcal N}_\Gamma(z)
	\right\}
\]
holds, we refer to \cite{DoRo14,GfrererOutrata2016b} for a broader
view on solution mappings of parametrized variational systems
and estimates of their generalized derivatives.
Particularly, in case where $j$ is twice continuously differentiable, we obtain the additional estimate
\begin{equation*}
	D^*\Psi(\bar\lambda,\bar z)(\nu)
	\subset
	\left\{
		\nabla j(\bar z)\mu\,\middle|\,
		-\nu\in\mathsmaller\sum\nolimits_{i=1}^m\bar\lambda_i\nabla^2j_i(\bar z)\mu 
		+
		D^*\widehat{\mathcal N}_{\Gamma}\bigl(\bar z,-\nabla j(\bar z)^\top\bar\lambda\bigr)(\mu)
	\right\}
	+
	\mathcal N_\Delta(\bar\lambda)
\end{equation*}
applying the pre-image rule to 
\[
	\gph \Psi 
	= 
	\left\{
	(\lambda,z) \,\middle|\, (z,-\nabla j(z)^\top\lambda,\lambda) 
	\in 
	\gph \widehat{\mathcal N}_\Gamma\times\Delta
	\right\}
\]
which is possible whenever the feasibility mapping 
$(\lambda,z)\tto(z,-\nabla j(z)^\top\lambda,\lambda)-\gph\widehat{\mathcal N}_\Gamma\times\Delta$
is metrically subregular at $((\bar\lambda,\bar z),(0,0,0))$,
see \cref{sec:abstract_M_St}.
The coderivative of the normal cone mapping associated with $\Gamma$ can be estimated
from above in several interesting situations covering the setting where $\Gamma$ is
the pre-image of a closed and convex (polyhedral) set under a sufficiently smooth mapping, see the references mentioned in \cref{sec:app_bilevel_programming}.
Using a related approach, fully explicit necessary optimality condition for 
semivectorial bilevel optimization problems are derived in \cite{Zemkoho2016}.

Observing that $F$ and $G$ are metrically subregular mappings in the present setting
\eqref{eq:setting_multiobjective_optimization}, 
metric subregularity of the mapping
$\widetilde\Sigma\colon\R^n\times\R^m\tto\R^m\times\R^{m+n}$ given by
\[
	\forall z\in\R^n,\,\forall\lambda\in\R^m\colon\quad
	\widetilde\Sigma(z,\lambda):=\bigl(\lambda-\Delta\bigr)\times\bigl((\lambda,z)-\gph\Psi\bigr)
\]
at $((\bar z,\bar\lambda),(0,(0,0)))$ for some $\bar\lambda\in K(\bar z)$ is already
enough to obtain that the associated mapping $\mathcal H_M$ is metrically subregular
at $((\bar z,\bar\lambda),(0,0))$, see \cref{lem:metric_subregularity_of_productsII}.
Note that $\widetilde\Sigma$ results from the mapping $\Sigma$ given in 
\cref{lem:metric_subregularity_of_productsII} by deleting the trivial factor $\R^n$ which
does not influence the metric subregularity property.
In the presence of this metric subregularity, $\bar z$ is an explicitly M-stationary point of
\eqref{eq:EMOP_implicit_multipliers} provided $\bar z$ is a local minimizer of this
program, see \cref{prop:Stationarities_via_subreguarity} and the arguments in 
\cref{sec:optimality_conditions_via_special_structure}.

\subsection{Cardinality-constrained programming}\label{sec:app_CCMP}

Finally, we investigate the setting of cardinality-constrained optimization from
\cref{sec:CCMP}.
In the context of \eqref{eq:setting_CCMP}, the mapping $K\colon\R^n\rightrightarrows\R^n$
is given by
\[
	\forall z\in\R^n\colon\quad
	K(z)=
	\left\{\lambda\in[0,1]^n\,\middle|\,
			\mathsmaller\sum\nolimits_{i\in I^0(z)}\lambda_i\geq n-\kappa
			,\,\forall i\in I^{\pm}(z)\colon\,\lambda_i=0
	\right\}
\]
where we used
\[
	I^{\pm}(z):=\{i\in\{1,\ldots,n\}\,|\,z_i\neq 0\},
	\qquad
	I^0(z):=\{i\in\{1,\ldots,n\}\,|\,z_i=0\}
\]
for arbitrary $z\in\R^n$. 
Obviously, $K$ is locally bounded and, thus, inner semicompact
w.r.t.\ the set $\dom K=\{z\in\R^n\,|\,\norm{z}{0}\leq\kappa\}$ at
all points of its domain.
The images of $K$ are polytopes (i.e., bounded polyhedrons). 
Furthermore, $K(z)$ is a singleton if and only if
$z$ satisfies $\norm{z}{0}=\kappa$. In this regard, \cref{thm:global_relationship}
recovers \cite[Theorem~3.2]{BurdakovKanzowSchwartz2016}.
On the other hand, the results \cite[Proposition~3.5, Theorem~3.6]{BurdakovKanzowSchwartz2016}
are consequences of \cref{thm:local_relationship}. However, our result even applies to
settings where $\norm{z}{0}<\kappa$ is valid, and, thus, clarifies the situation
in \cite[Examples~1 and 2]{BurdakovKanzowSchwartz2016}.

Let us mention first that due to the arguments from \cref{sec:CCMP}, the mapping $H$ takes
the form $z\tto D_\kappa-z$ in the situation at hand, 
see \eqref{eq:disjunctive_reformulation_cardinality_constraint},
and we note that it is polyhedral by nature of $D_\kappa$
(since $D_\kappa$ is the union of finitely many convex, polyhedral sets).
The implicit M-stationarity conditions of \eqref{eq:CCMP} w.r.t.\ 
a feasible point $\bar z\in\R^n$ are given by
\[
	0\in\partial f(\bar z)+\mathcal N_{D_\kappa}(\bar z)+\mathcal N_M(\bar z).
\]
By means of the formula
\[
	\mathcal N_{D_\kappa}(\bar z)
	=
	\{\nu\in\R^n\,|\,\norm{\nu}{0}\leq n-\kappa,\,\forall i\in I^{\pm}(\bar z)\colon\,\nu_i=0\},
\]
which can be distilled from \cite[Lemma~2.3]{PanXiuFan2017}, we obtain a
reasonable optimality condition as soon as the variational structure of
$M$ is nice enough. Let us point out that whenever $M$ is the union of finitely
many convex polyhedral sets, then each local minimizer of the associated problem 
\eqref{eq:CCMP} is implicitly M-stationary by \cref{rem:handling_of_abstract_constraints} 
since the feasibility mapping 
$z\tto (z-D_\kappa)\times(z-M)$ is polyhedral in this situation and, thus,
metrically subregular at all points of its graph. In case where the variational
geometry of $M$ is more difficult, metric subregularity of this feasibility mapping
at $(\bar z,(0,0))$ is still enough to infer implicit M-stationarity whenever $\bar z$
is a local minimizer of \eqref{eq:CCMP}. In case where $M:=\{z\in\R^n\,|\,\Phi(z)\in\Omega\}$
holds for a continuously differentiable mapping $\Phi\colon\R^n\to\R^p$ and a closed set
$\Omega\subset\R^p$, the associated Mordukhovich criterion, which ensures metric
regularity of the feasibility mapping and validity of the pre-image rule for
the estimation of the limiting normal cone to $D_\kappa\cap M$ at $\bar z$, takes the form
\[
\bigl\Vert\nabla\Phi(\bar z)^\top\eta\bigr\Vert_0\leq n-\kappa,\,
			\eta\in\mathcal N_\Omega(\Phi(\bar z)),\,
			\forall i\in I^{\pm}(\bar z)\colon\,
				\bigl(\nabla\Phi(\bar z)^\top\eta\bigr)_i=0
		\quad
	\Longrightarrow
	\quad
	\eta=0.
\]
As we pointed out in \cref{sec:optimality_conditions_via_special_structure}, the fuzzy
and explicit M-stationarity conditions of \eqref{eq:CCMP} which follow from the
setting \eqref{eq:setting_CCMP} coincide.
Performing some calculations and keeping statement (a) of \cref{prop:explicit_M_stationarity_in_particular_settings} in mind, these explicit M-stationarity
conditions take the following form at a reference point
$\bar z$: 
there exists some $\bar\lambda\in K(\bar z)$ such that
\[
	0\in\partial f(\bar z)
				+\{\nu\in\R^n\,|\,\forall i\in I^{\pm}(\bar z)\colon\,\nu_i=0\}
				+\mathcal N_M(\bar z).
\]
We note that this condition does not depend on $\bar\lambda$ at all.
It corresponds to the M-stationarity notion discussed in 
\cite{BucherSchwartz2018,BurdakovKanzowSchwartz2016,CervinkaKanzowSchwartz2016}.
At the first glance, we observe that the implicit M-stationarity condition is more restrictive
than its explicit counterpart. In light of statement (a) of 
\cref{prop:explicit_M_stationarity_in_particular_settings}, this is not surprising since the 
intermediate mapping $\widehat K$ associated with the present setting is inner semicompact w.r.t.\
its domain at each point of the latter. 
Recall that the explicit M-stationarity system does not depend on
the precise choice of the implicit variable from $K(\bar z)$, i.e., implicit M-stationarity
of $\bar z$ implies explicit M-stationarity w.r.t.\ all implicit variables from $K(\bar z)$. 
On the other hand, let us mention that in case of $\bar z:=0$ being feasible to 
\eqref{eq:CCMP}, it is
always an explicitly M-stationary point of \eqref{eq:CCMP} w.r.t.\ each implicit variable 
from $K(\bar z)$ while this point is not necessarily implicitly M-stationary.

\section{Conclusions}\label{sec:conclusions}

The essential message of this paper says that whenever optimization problems with
implicit variables are under consideration, it is better to leave them implicit
as long as possible. In many practically relevant situations like
cardinality-constrained programming, see \cref{sec:app_CCMP}, this procedure leads to more
restrictive necessary optimality conditions which hold under less restrictive
constraint qualifications (in a certain sense). Furthermore, the implicit formulation avoids the 
appearance of artificial local minimizers, and this might be beneficial not only
from a theoretical but also from a numerical point of view. 
It is, thus, always desirable to explore the inherent problem structure of the original
problem instead of making its implicit variables explicit for a non-negligible price.
Exemplary, let us mention that a convincing variational description of the
weakly efficient set of a multiobjective optimization problem which avoids the
use of scalarization variables is likely to enhance the results from 
\cref{sec:app_multicriteria_optimization}.


\begin{thebibliography}{10}

\bibitem{AdamHenrionOutrata2018}
L{.\nobreak\kern 0.33333em}Adam, R{.\nobreak\kern 0.33333em}Henrion, and
  J{.\nobreak\kern 0.33333em}Outrata, On {M}-stationarity conditions in
  {M}{P}{E}{C}s and the associated qualification conditions, \emph{Mathematical
  Programming} 168 (2018),  229--259,
  \href{https://dx.doi.org/10.1007/s10107-017-1146-3}{\nolinkurl{doi:10.1007/s10107-017-1146-3}}.

\bibitem{AusselSvensson2019}
D{.\nobreak\kern 0.33333em}Aussel and A{.\nobreak\kern 0.33333em}Svensson, Is
  pessimistic bilevel programming a special case of a mathematical program with
  complementarity constraints?, \emph{Journal of Optimization Theory and
  Applications} 181 (2019),  504--520,
  \href{https://dx.doi.org/10.1007/s10957-018-01467-7}{\nolinkurl{doi:10.1007/s10957-018-01467-7}}.

\bibitem{BaiYeZhang2019}
K{.\nobreak\kern 0.33333em}Bai, J.\,J{.\nobreak\kern 0.33333em}Ye, and
  J{.\nobreak\kern 0.33333em}Zhang, Directional quasi-/pseudo-normality as
  sufficient conditions for metric subregularity, \emph{SIAM Journal on
  Optimization} 29 (2019),  2625--2649,
  \href{https://dx.doi.org/10.1137/18M1232498}{\nolinkurl{doi:10.1137/18m1232498}}.

\bibitem{Bard1998}
J.\,F{.\nobreak\kern 0.33333em}Bard, \emph{Practical {B}ilevel {O}ptimization:
  {A}lgorithms and {A}pplications}, Kluwer Academic, Dordrecht, 1998.

\bibitem{BeckHallak2016}
A{.\nobreak\kern 0.33333em}Beck and N{.\nobreak\kern 0.33333em}Hallak, On the
  minimization over sparse symmetric sets: projections, optimality conditions,
  and algorithms, \emph{Mathematics of Operations Research} 41 (2016),
  196--223,
  \href{https://dx.doi.org/10.1287/moor.2015.0722}{\nolinkurl{doi:10.1287/moor.2015.0722}}.

\bibitem{Benko2019}
M{.\nobreak\kern 0.33333em}Benko, On inner calmness*, generalized calculus, and
  derivatives of the normal-cone mapping, \emph{Journal of Nonsmooth Analysis
  and Optimization} 2 (2021),  5881,
  \href{https://dx.doi.org/10.46298/jnsao-2021-5881}{\nolinkurl{doi:10.46298/jnsao-2021-5881}}.

\bibitem{BenkoCervinkaHoheisel2019}
M{.\nobreak\kern 0.33333em}Benko, M{.\nobreak\kern 0.33333em}{\v{C}}ervinka,
  and T{.\nobreak\kern 0.33333em}Hoheisel, Sufficient conditions for metric
  subregularity of constraint systems with applications to disjunctive and
  ortho-disjunctive programs, \emph{Set-Valued and Variational Analysis}
  (2021),  1--35,
  \href{https://dx.doi.org/10.1007/s11228-020-00569-7}{\nolinkurl{doi:10.1007/s11228-020-00569-7}}.

\bibitem{BenkoGfrererOutrata2019}
M{.\nobreak\kern 0.33333em}Benko, H{.\nobreak\kern 0.33333em}Gfrerer, and
  J.\,V{.\nobreak\kern 0.33333em}Outrata, Calculus for directional limiting
  normal cones and subdifferentials, \emph{Set-Valued and Variational Analysis}
  27 (2019),  713--745,
  \href{https://dx.doi.org/10.1007/s11228-018-0492-5}{\nolinkurl{doi:10.1007/s11228-018-0492-5}}.

\bibitem{BenkoMehlitz2020}
M{.\nobreak\kern 0.33333em}Benko and P{.\nobreak\kern 0.33333em}Mehlitz,
  Calmness and calculus: two basic patterns, \emph{Set-Valued and Variational
  Analysis}  (2021),  1--37,
  \href{https://dx.doi.org/10.1007/s11228-021-00589-x}{\nolinkurl{doi:10.1007/s11228-021-00589-x}}.

\bibitem{Benson1986}
H.\,P{.\nobreak\kern 0.33333em}Benson, An algorithm for optimizing over the
  weakly-efficient set, \emph{European Journal of Operational Research} 25
  (1986),  192--199,
  \href{https://dx.doi.org/10.1016/0377-2217(86)90085-8}{\nolinkurl{doi:10.1016/0377-2217(86)90085-8}}.

\bibitem{Bienstock1996}
D{.\nobreak\kern 0.33333em}Bienstock, Computational study of a family of
  mixed-integer quadratic programming problems, \emph{Mathematical Programming}
  74 (1996),  121--140,
  \href{https://dx.doi.org/10.1007/BF02592208}{\nolinkurl{doi:10.1007/bf02592208}}.

\bibitem{Bolintineanu1993b}
S{.\nobreak\kern 0.33333em}Bolintineanu, Necessary conditions for nonlinear
  suboptimization over the weakly-efficient set, \emph{Journal of Optimization
  Theory and Applications} 78 (1993),  579--598,
  \href{https://dx.doi.org/10.1007/BF00939883}{\nolinkurl{doi:10.1007/bf00939883}}.

\bibitem{Bolintineanu1993}
S{.\nobreak\kern 0.33333em}Bolintineanu, Optimality conditions for minimization
  over the (weakly or properly) efficient set, \emph{Journal of Mathematical
  Analysis and Applications} 173 (1993),  523--541,
  \href{https://dx.doi.org/10.1006/jmaa.1993.1085}{\nolinkurl{doi:10.1006/jmaa.1993.1085}}.

\bibitem{BonnansShapiro2000}
J.\,F{.\nobreak\kern 0.33333em}Bonnans and A{.\nobreak\kern 0.33333em}Shapiro,
  \emph{Perturbation {A}nalysis of {O}ptimization {P}roblems}, Springer, New
  York, 2000.

\bibitem{BotCsetnekNagy2013}
R.\,I{.\nobreak\kern 0.33333em}Bo{\c{t}}, E.\,R{.\nobreak\kern
  0.33333em}Csetnek, and E{.\nobreak\kern 0.33333em}Nagy, Solving systems of
  monotone inclusions via primal-dual splitting techniques, \emph{Taiwanese
  Journal of Mathematics} 17 (2013),  1983--2009,
  \href{https://dx.doi.org/10.11650/tjm.17.2013.3087}{\nolinkurl{doi:10.11650/tjm.17.2013.3087}}.

\bibitem{BucherSchwartz2018}
M{.\nobreak\kern 0.33333em}Bucher and A{.\nobreak\kern 0.33333em}Schwartz,
  Second-order optimality conditions and improved convergence results for
  regularization methods for cardinality-constrained optimization problems,
  \emph{Journal of Optimization Theory and Applications} 178 (2018),  383--410,
  \href{https://dx.doi.org/10.1007/s10957-018-1320-7}{\nolinkurl{doi:10.1007/s10957-018-1320-7}}.

\bibitem{BurdakovKanzowSchwartz2016}
O.\,P{.\nobreak\kern 0.33333em}Burdakov, C{.\nobreak\kern 0.33333em}Kanzow, and
  A{.\nobreak\kern 0.33333em}Schwartz, Mathematical programs with cardinality
  constraints: reformulation by complementarity-type conditions and a
  regularization method, \emph{SIAM Journal on Optimization} 26 (2016),
  397--425,
  \href{https://dx.doi.org/10.1137/140978077}{\nolinkurl{doi:10.1137/140978077}}.

\bibitem{CervinkaKanzowSchwartz2016}
M{.\nobreak\kern 0.33333em}{\v{C}}ervinka, C{.\nobreak\kern 0.33333em}Kanzow,
  and A{.\nobreak\kern 0.33333em}Schwartz, Constraint qualifications and
  optimality conditions for optimization problems with cardinality constraints,
  \emph{Mathematical Programming} 160 (2016),  353--377,
  \href{https://dx.doi.org/10.1007/s10107-016-0986-6}{\nolinkurl{doi:10.1007/s10107-016-0986-6}}.

\bibitem{Dempe2002}
S{.\nobreak\kern 0.33333em}Dempe, \emph{Foundations of Bilevel Programming},
  Kluwer, Dordrecht, 2002.

\bibitem{DempeDutta2012}
S{.\nobreak\kern 0.33333em}Dempe and J{.\nobreak\kern 0.33333em}Dutta, Is
  bilevel programming a special case of a mathematical program with
  complementarity constraints?, \emph{Mathematical Programming} 131 (2012),
  37--48,
  \href{https://dx.doi.org/10.1007/s10107-010-0342-1}{\nolinkurl{doi:10.1007/s10107-010-0342-1}}.

\bibitem{DempeKalashnikovPerezValdesKalashnykova2015}
S{.\nobreak\kern 0.33333em}Dempe, V{.\nobreak\kern 0.33333em}Kalashnikov,
  G{.\nobreak\kern 0.33333em}P{\'{e}}rez-Vald{\'{e}}z, and N{.\nobreak\kern
  0.33333em}Kalashnykova, \emph{Bilevel Programming Problems - Theory,
  Algorithms and Applications to Energy Networks}, Springer, Berlin, 2015.

\bibitem{DempeMefoMehlitz2018}
S{.\nobreak\kern 0.33333em}Dempe, F{.\nobreak\kern 0.33333em}Mefo~Kue, and
  P{.\nobreak\kern 0.33333em}Mehlitz, Optimality conditions for special
  semidefinite bilevel optimization problems, \emph{SIAM Journal on
  Optimization} 28 (2018),  1564--1587,
  \href{https://dx.doi.org/10.1137/16M1099303}{\nolinkurl{doi:10.1137/16m1099303}}.

\bibitem{DempeMehlitz2018}
S{.\nobreak\kern 0.33333em}Dempe and P{.\nobreak\kern 0.33333em}Mehlitz,
  Semivectorial bilevel programming versus scalar bilevel programming,
  \emph{Optimization} 69 (2020),  657--679,
  \href{https://dx.doi.org/10.1080/02331934.2019.1625900}{\nolinkurl{doi:10.1080/02331934.2019.1625900}}.

\bibitem{DoRo96}
A.\,L{.\nobreak\kern 0.33333em}Dontchev and R.\,T{.\nobreak\kern
  0.33333em}Rockafellar, Characterizations of strong regularity for variational
  inequalities over polyhedral convex sets, \emph{SIAM Journal on Optimization}
  6 (1996),  1087--1105,
  \href{https://dx.doi.org/10.1137/S1052623495284029}{\nolinkurl{doi:10.1137/s1052623495284029}}.

\bibitem{DoRo14}
A.\,L{.\nobreak\kern 0.33333em}Dontchev and R.\,T{.\nobreak\kern
  0.33333em}Rockafellar, \emph{Implicit Functions and Solution Mappings},
  Springer, Heidelberg, 2014.

\bibitem{Ehrgott2005}
M{.\nobreak\kern 0.33333em}Ehrgott, \emph{Multicriteria Optimization},
  Springer, Berlin, 2005.

\bibitem{FabHenKruOut10}
M.\,J{.\nobreak\kern 0.33333em}Fabian, R{.\nobreak\kern 0.33333em}Henrion,
  A.\,Y{.\nobreak\kern 0.33333em}Kruger, and J.\,V{.\nobreak\kern
  0.33333em}Outrata, Error bounds: necessary and sufficient conditions,
  \emph{Set-Valued and Variational Analysis} 18 (2010),  121--149,
  \href{https://dx.doi.org/10.1007/s11228-010-0133-0}{\nolinkurl{doi:10.1007/s11228-010-0133-0}}.

\bibitem{FlegelKanzowOutrata2007}
M.\,L{.\nobreak\kern 0.33333em}Flegel, C{.\nobreak\kern 0.33333em}Kanzow, and
  J.\,V{.\nobreak\kern 0.33333em}Outrata, Optimality conditions for disjunctive
  programs with application to mathematical programs with equilibrium
  constraints, \emph{Set-Valued Analysis} 15 (2007),  139--162,
  \href{https://dx.doi.org/10.1007/s11228-006-0033-5}{\nolinkurl{doi:10.1007/s11228-006-0033-5}}.

\bibitem{FrankeMehlitzPilecka2018}
S{.\nobreak\kern 0.33333em}Franke, P{.\nobreak\kern 0.33333em}Mehlitz, and
  M{.\nobreak\kern 0.33333em}Pilecka, Optimality conditions for the simple
  convex bilevel programming problem in {B}anach spaces, \emph{Optimization} 67
  (2018),  237--268,
  \href{https://dx.doi.org/10.1080/02331934.2017.1394296}{\nolinkurl{doi:10.1080/02331934.2017.1394296}}.

\bibitem{GfrererKlatte2016}
H{.\nobreak\kern 0.33333em}Gfrerer and D{.\nobreak\kern 0.33333em}Klatte,
  Lipschitz and {H}\"{o}lder stability of optimization problems and generalized
  equations, \emph{Mathematical Programming} 158 (2016),  35--75,
  \href{https://dx.doi.org/10.1007/s10107-015-0914-1}{\nolinkurl{doi:10.1007/s10107-015-0914-1}}.

\bibitem{GfrererOutrata2016}
H{.\nobreak\kern 0.33333em}Gfrerer and J.\,V{.\nobreak\kern 0.33333em}Outrata,
  On computation of generalized derivatives of the normal-cone mapping and
  their applications, \emph{Mathematics of Operations Research} 41 (2016),
  1535--1556,
  \href{https://dx.doi.org/10.1287/moor.2016.0789}{\nolinkurl{doi:10.1287/moor.2016.0789}}.

\bibitem{GfrererOutrata2016c}
H{.\nobreak\kern 0.33333em}Gfrerer and J.\,V{.\nobreak\kern 0.33333em}Outrata,
  On computation of limiting coderivatives of the normal-cone mapping to
  inequality systems and their applications, \emph{Optimization} 65 (2016),
  671--700,
  \href{https://dx.doi.org/10.1080/02331934.2015.1066372}{\nolinkurl{doi:10.1080/02331934.2015.1066372}}.

\bibitem{GfrererOutrata2016b}
H{.\nobreak\kern 0.33333em}Gfrerer and J.\,V{.\nobreak\kern 0.33333em}Outrata,
  On {L}ipschitzian properties of implicit multifunctions, \emph{SIAM Journal
  on Optimization} 26 (2016),  2160--2189,
  \href{https://dx.doi.org/10.1137/15M1052299}{\nolinkurl{doi:10.1137/15m1052299}}.

\bibitem{GfrOut17}
H{.\nobreak\kern 0.33333em}Gfrerer and J.\,V{.\nobreak\kern 0.33333em}Outrata,
  On the {A}ubin property of a class of parameterized variational systems,
  \emph{Mathematical Methods of Operations Research} 86 (2017),  443--467,
  \href{https://dx.doi.org/10.1007/s00186-017-0596-y}{\nolinkurl{doi:10.1007/s00186-017-0596-y}}.

\bibitem{GfrererOutrata2019}
H{.\nobreak\kern 0.33333em}Gfrerer and J.\,V{.\nobreak\kern 0.33333em}Outrata,
  On a semismooth$^*$ {N}ewton method for solving generalized equations,
  \emph{SIAM Journal on Optimization} 31 (2021),  489--517,
  \href{https://dx.doi.org/10.1137/19M1257408}{\nolinkurl{doi:10.1137/19m1257408}}.

\bibitem{GfrererYe2019}
H{.\nobreak\kern 0.33333em}Gfrerer and J.\,J{.\nobreak\kern 0.33333em}Ye, New
  sharp necessary optimality conditions for mathematical programs with
  equilibrium constraints, \emph{Set-Valued and Variational Analysis} 28
  (2020),  395--426,
  \href{https://dx.doi.org/10.1007/s11228-019-00519-y}{\nolinkurl{doi:10.1007/s11228-019-00519-y}}.

\bibitem{HenrionJouraniOutrata2002}
R{.\nobreak\kern 0.33333em}Henrion, A{.\nobreak\kern 0.33333em}Jourani, and
  J.\,V{.\nobreak\kern 0.33333em}Outrata, On the calmness of a class of
  multifunctions, \emph{SIAM Journal on Optimization} 13 (2002),  603--618,
  \href{https://dx.doi.org/10.1137/S1052623401395553}{\nolinkurl{doi:10.1137/s1052623401395553}}.

\bibitem{HenrionOutrata2005}
R{.\nobreak\kern 0.33333em}Henrion and J.\,V{.\nobreak\kern 0.33333em}Outrata,
  Calmness of constraint systems with applications, \emph{Mathematical
  Programming} 104 (2005),  437--464,
  \href{https://dx.doi.org/10.1007/s10107-005-0623-2}{\nolinkurl{doi:10.1007/s10107-005-0623-2}}.

\bibitem{HorstThoai1999}
R{.\nobreak\kern 0.33333em}Horst and N.\,V{.\nobreak\kern 0.33333em}Thoai,
  Maximizing a concave function over the efficient or weakly-efficient set,
  \emph{European Journal of Operational Research} 117 (1999),  239--252,
  \href{https://dx.doi.org/10.1016/S0377-2217(98)00230-6}{\nolinkurl{doi:10.1016/s0377-2217(98)00230-6}}.

\bibitem{IofOut08}
A.\,D{.\nobreak\kern 0.33333em}Ioffe and J.\,V{.\nobreak\kern
  0.33333em}Outrata, On metric and calmness qualification conditions in
  subdifferential calculus, \emph{Set-Valued Analysis} 16 (2008),  199--227,
  \href{https://dx.doi.org/10.1007/s11228-008-0076-x}{\nolinkurl{doi:10.1007/s11228-008-0076-x}}.

\bibitem{Jahn2004}
J{.\nobreak\kern 0.33333em}Jahn, \emph{Vector Optimization}, Springer, Berlin,
  2004.

\bibitem{KlatteKummer2002}
D{.\nobreak\kern 0.33333em}Klatte and B{.\nobreak\kern 0.33333em}Kummer,
  Constrained minima and {L}ipschitzian penalties in metric spaces, \emph{SIAM
  Journal on Optimization} 13 (2002),  619--633,
  \href{https://dx.doi.org/10.1137/S105262340139625X}{\nolinkurl{doi:10.1137/s105262340139625x}}.

\bibitem{LamparielloSagratellaStein2019}
L{.\nobreak\kern 0.33333em}Lampariello, S{.\nobreak\kern 0.33333em}Sagratella,
  and O{.\nobreak\kern 0.33333em}Stein, The standard pessimistic bilevel
  problem, \emph{SIAM Journal on Optimization} 29 (2019),  1634--1656,
  \href{https://dx.doi.org/10.1137/18M119759X}{\nolinkurl{doi:10.1137/18m119759x}}.

\bibitem{Mehlitz2019b}
P{.\nobreak\kern 0.33333em}Mehlitz, A comparison of solution approaches for the
  numerical treatment of or-constrained optimization problems,
  \emph{Computational Optimization and Applications} 76 (2020),  233--275,
  \href{https://dx.doi.org/10.1007/s10589-020-00169-z}{\nolinkurl{doi:10.1007/s10589-020-00169-z}}.

\bibitem{Mehlitz2019a}
P{.\nobreak\kern 0.33333em}Mehlitz, On the linear independence constraint
  qualification in disjunctive programming, \emph{Optimization} 69 (2020),
  2241--2277,
  \href{https://dx.doi.org/10.1080/02331934.2019.1679811}{\nolinkurl{doi:10.1080/02331934.2019.1679811}}.

\bibitem{Mehlitz2019}
P{.\nobreak\kern 0.33333em}Mehlitz, Stationarity conditions and constraint
  qualifications for mathematical programs with switching constraints,
  \emph{Mathematical Programming} 181 (2020),  149--186,
  \href{https://dx.doi.org/10.1007/s10107-019-01380-5}{\nolinkurl{doi:10.1007/s10107-019-01380-5}}.

\bibitem{MehlitzZemkoho2019}
P{.\nobreak\kern 0.33333em}Mehlitz and A.\,B{.\nobreak\kern 0.33333em}Zemkoho,
  Sufficient optimality conditions in bilevel programming, \emph{Mathematics of
  Operations Research}  (2021),  1--26,
  \href{https://dx.doi.org/10.1287/moor.2021.1122}{\nolinkurl{doi:10.1287/moor.2021.1122}}.

\bibitem{Mordukhovich2018}
B.\,S{.\nobreak\kern 0.33333em}Mordukhovich, \emph{Variational Analysis and
  Applications}, Springer, Cham, 2018.

\bibitem{MordukhovichOutrata2007}
B.\,S{.\nobreak\kern 0.33333em}Mordukhovich and J.\,V{.\nobreak\kern
  0.33333em}Outrata, Coderivative analysis of quasi-variational inequalities
  with applications to stability and optimization, \emph{SIAM Journal on
  Optimization} 18 (2007),  389--412,
  \href{https://dx.doi.org/10.1137/060665609}{\nolinkurl{doi:10.1137/060665609}}.

\bibitem{PanXiuFan2017}
L{.\nobreak\kern 0.33333em}Pan, N{.\nobreak\kern 0.33333em}Xiu, and
  J{.\nobreak\kern 0.33333em}Fan, Optimality conditions for sparse nonlinear
  programming, \emph{Science China Mathematics} 60 (2017),  759--776,
  \href{https://dx.doi.org/10.1007/s11425-016-9010-x}{\nolinkurl{doi:10.1007/s11425-016-9010-x}}.

\bibitem{PaXiuZhoun2015}
L.\,L{.\nobreak\kern 0.33333em}Pan, N.\,H{.\nobreak\kern 0.33333em}Xiu, and
  S.\,L{.\nobreak\kern 0.33333em}Zhou, On solutions of sparsity constrained
  optimization, \emph{Journal of the Operations Research Society of China} 3
  (2015),  421--439,
  \href{https://dx.doi.org/10.1007/s40305-015-0101-3}{\nolinkurl{doi:10.1007/s40305-015-0101-3}}.

\bibitem{Robinson1981}
S.\,M{.\nobreak\kern 0.33333em}Robinson, Some continuity properties of
  polyhedral multifunctions, in \emph{Mathematical Programming at
  {O}berwolfach}, H{.\nobreak\kern 0.33333em}K{\"o}nig, B{.\nobreak\kern
  0.33333em}Korte, and K{.\nobreak\kern 0.33333em}Ritter (eds.), Springer,
  Berlin, 1981,  206--214.

\bibitem{RockafellarWets1998}
R.\,T{.\nobreak\kern 0.33333em}Rockafellar and R.\,J.\,B{.\nobreak\kern
  0.33333em}Wets, \emph{Variational Analysis}, volume 317 of Grundlehren der
  mathematischen Wissenschaften, Springer, Berlin, 1998.

\bibitem{ShimizuIshizukaBard1997}
K{.\nobreak\kern 0.33333em}Shimizu, Y{.\nobreak\kern 0.33333em}Ishizuka, and
  J.\,F{.\nobreak\kern 0.33333em}Bard, \emph{Nondifferentiable and Two-level
  Mathematical Programming}, Kluwer Academic, Dordrecht, 1997.

\bibitem{Wachsmuth2013}
G{.\nobreak\kern 0.33333em}Wachsmuth, On {L}{I}{C}{Q} and the uniqueness of
  {L}agrange multipliers, \emph{Operations Research Letters} 41 (2013),  78 --
  80,
  \href{https://dx.doi.org/10.1016/j.orl.2012.11.009}{\nolinkurl{doi:10.1016/j.orl.2012.11.009}}.

\bibitem{Wachsmuth2015}
G{.\nobreak\kern 0.33333em}Wachsmuth, Mathematical programs with
  complementarity constraints in {B}anach spaces, \emph{Journal of Optimization
  Theory and Applications} 166 (2015),  480--507,
  \href{https://dx.doi.org/10.1007/s10957-014-0695-3}{\nolinkurl{doi:10.1007/s10957-014-0695-3}}.

\bibitem{YeYe1997}
J.\,J{.\nobreak\kern 0.33333em}Ye and X.\,Y{.\nobreak\kern 0.33333em}Ye,
  Necessary optimality conditions for optimization problems with variational
  inequality constraints, \emph{Mathematics of Operations Research} 22 (1997),
  977--997,
  \href{https://dx.doi.org/10.1287/moor.22.4.977}{\nolinkurl{doi:10.1287/moor.22.4.977}}.

\bibitem{YeZhu2010}
J.\,J{.\nobreak\kern 0.33333em}Ye and D{.\nobreak\kern 0.33333em}Zhu, New
  necessary optimality conditions for bilevel programs by combining the {MPEC}
  and value function approaches, \emph{SIAM Journal on Optimization} 20 (2010),
   1885--1905,
  \href{https://dx.doi.org/10.1137/080725088}{\nolinkurl{doi:10.1137/080725088}}.

\bibitem{Zemkoho2016}
A.\,B{.\nobreak\kern 0.33333em}Zemkoho, Solving ill-posed bilevel programs,
  \emph{Set-Valued and Variational Analysis} 24 (2016),  423--448,
  \href{https://dx.doi.org/10.1007/s11228-016-0371-x}{\nolinkurl{doi:10.1007/s11228-016-0371-x}}.

\bibitem{ZheNg10}
X.\,Y{.\nobreak\kern 0.33333em}Zheng and K.\,F{.\nobreak\kern 0.33333em}Ng,
  Metric subregularity and calmness for nonconvex generalized equations in
  {B}anach spaces, \emph{SIAM Journal on Optimization} 20 (2010),  2119--2136,
  \href{https://dx.doi.org/10.1137/090772174}{\nolinkurl{doi:10.1137/090772174}}.

\end{thebibliography}


\end{document}